\DeclareMathOperator*{\argmin}{argmin}
\newtheorem{theorem}{Theorem}
\newtheorem{corollary}{Corollary}
\newtheorem{lemma}{Lemma}
\newtheorem{proposition}{Proposition}
\newtheorem{assumption}{Assumption}
\newcommand{\dif}{\mathrm{d}}
\title{A $c/\mu$-Rule for Service Resource Allocation in Group-Server Queues}
\author{Li Xia \thanks{Li Xia is with the Center for Intelligent and Networked Systems (CFINS),
Department of Automation, TNList, Tsinghua University, Beijing
100084, China (e-mail: xial@tsinghua.edu.cn).}, Zhe George Zhang
\thanks{Zhe George Zhang is with the Department of Decision Sciences, Western Washington University, Bellingham, Washington 98225;
and Faculty of Business Administration, Simon Fraser University,
Burnaby, British Columbia V5A 1S6, Canada (e-mail:
george.zhang@wwu.edu).}, Quan-Lin Li \thanks{Quan-Lin Li is with the
School of Economics and Management Sciences, Yanshan University,
Qinhuangdao 066004, China (e-mail: liquanlin@tsinghua.edu.cn).},
Peter W. Glynn
\thanks{Peter W. Glynn is with the Department of Management Science and Engineering, Stanford University, Stanford, CA 94305, USA (e-mail:
glynn@stanford.edu).}}
\date{}
\begin{document}
\maketitle
\begin{abstract}
This paper addresses a dynamic on/off server scheduling problem in a
queueing system with Poisson arrivals and $K$ groups of exponential
servers. Servers within each group are homogeneous and between
groups are heterogeneous. A scheduling policy prescribes the number
of working servers in each group at every state $n$ (number of
customers in the system). Our goal is to find the optimal policy to
minimize the long-run average cost, which consists of an increasing
convex holding cost for customers and a linear operating cost for
working servers. We prove that the optimal policy has monotone
structures and quasi bang-bang control forms. Specifically, we find
that the optimal policy is indexed by the value of $c - \mu G(n)$,
where $c$ is the operating cost rate, $\mu$ is the service rate, and
$G(n)$ is called \emph{perturbation realization factor}. Under a
reasonable condition of scale economies, we prove that the optimal
policy obeys a so-called \emph{$c$/$\mu$-rule}. That is, the servers
with smaller $c$/$\mu$ should be turned on with higher priority.
With the monotone property of $G(n)$, we further prove that the
optimal policy has a multi-threshold structure when the
$c$/$\mu$-rule is applied. Numerical experiments demonstrate that
the $c/\mu$-rule has a good scalability and robustness.
\end{abstract}

\textbf{Keywords:} Group-server queue, dynamic scheduling,
$c/{\mu}$-rule, resource allocation

\section{Introduction}\label{section_intro}
Queueing models are often formulated to study stochastic congestion
problems in manufacturing and service systems, computer and
communication networks, social economics, etc. Research on queueing
models is spreading from performance evaluation to performance
optimization for system design and control. Queueing phenomena is
caused by limited service resource. How to efficiently allocate the
service resource is a fundamental issue in queueing systems, which
continually attracts attention from the operational research
community, such as several papers published in \emph{Oper. Res.} in
recent volumes \cite{Dieker17,Tsitsiklis17}.

In practice, there exists a category of queueing systems with
multiple servers that provide homogeneous service at different
service rates and costs. These servers can be categorized into
different groups. Servers in the same group have the same service
rate and cost rate, while those in different groups are
heterogeneous in these two rates. Customers wait in line when all
the servers are busy. Running a server (keeping a server on) will
incur operating cost and holding a customer will incur waiting cost.
There exists a tradeoff between these costs. Keeping more servers on
will increase the operating cost but decrease the holding cost. The
holding cost is increasingly convex in queue length and the
operating cost is linear in the number of working servers. The
system controller can dynamically turn on or off servers according
to different backlogs (queue lengths) such that the system long-run
average cost can be minimized. We call such queueing models
\emph{group-server queues} \cite{Li17}. Note that any queueing
system with heterogeneous servers can be considered as a
group-server queue if the servers with the same service cost and
rate are grouped as a class. Following are some examples that
motivate the group-server queueing model.

\begin{itemize}
\item
Multi-tier storage systems: As illustrated in
Fig.~\ref{fig_multier}, such a multi-tier storage architecture is
widely used in intelligent storage systems, where different storages
are structured as multiple tiers and data are stored and migrated
according to their hotness (access frequency) \cite{Wang14,Zhang10}.
Solid state drive (SSD), hard disk drive (HDD), and cloud storage
are organized in a descending order of their speeds and costs. A
group-server queue fits such a system and can be used to study the
system performance, such as the response time of I/O requests. It is
an interesting topic to find the optimal architecture and scheduling
of I/O requests so that the desired system throughput is achieved at
a minimum cost.

\item
Clustered computing systems: As illustrated in
Fig.~\ref{fig_clustercomp}, the computing facilities of a server
farm are organized in clusters. Computers in different clusters have
different performance and power consumption. For example, high
performance computer (HPC) has greater processing rate and more
power consumption. Computing jobs can be scheduled and migrated
among computers. Energy efficiency is one of the key metrics for
evaluating the performance of data centers. Power management policy
aims to dynamically schedule servers' working states (e.g., high/low
power, or sleep) according to workloads such that power consumption
and processing rate can be traded-off in an optimal way
\cite{Gandhi10,Kant09}.

\item
Human staffed service systems: One example is a call center that
might have several groups of operators (customer representatives) in
different locations (or different countries). Depending on the
demand level, the number of operator groups attending calls may be
dynamically adjusted. The service efficiencies and operating costs
of these groups can be different although operators in each group
can be homogeneous in these two aspects. Another example is the
operation of a food delivery company, such as GrubHub in the US or
Ele.me in China, which has several restaurant partners with good
reputation. During the high demand period, the limited number of its
own delivery drivers (servers in group 1) may not be able to deliver
food orders to customers in a promised short time (due to long
queues). Thus, the company can share part of their delivery service
demands with another less reputable food delivery company who also
owns a set of drivers (servers in group 2).
\end{itemize}

\begin{figure}[htbp]
\centering \subfigure[A multi-tier storage system.]
{\includegraphics[width=0.45\columnwidth]{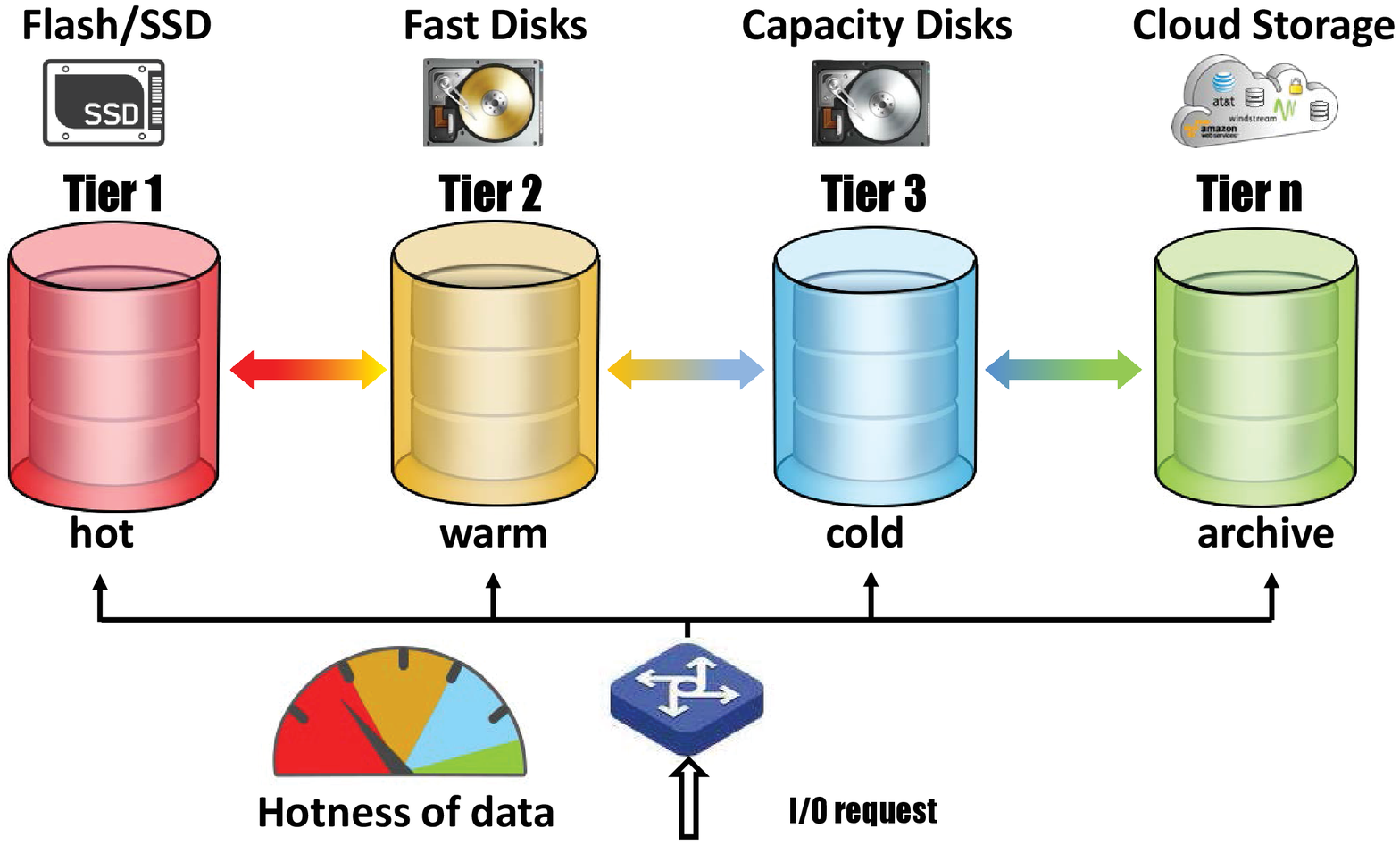}\label{fig_multier}}
\centering \subfigure[A clustered computing system.]
{\includegraphics[width=0.45\columnwidth]{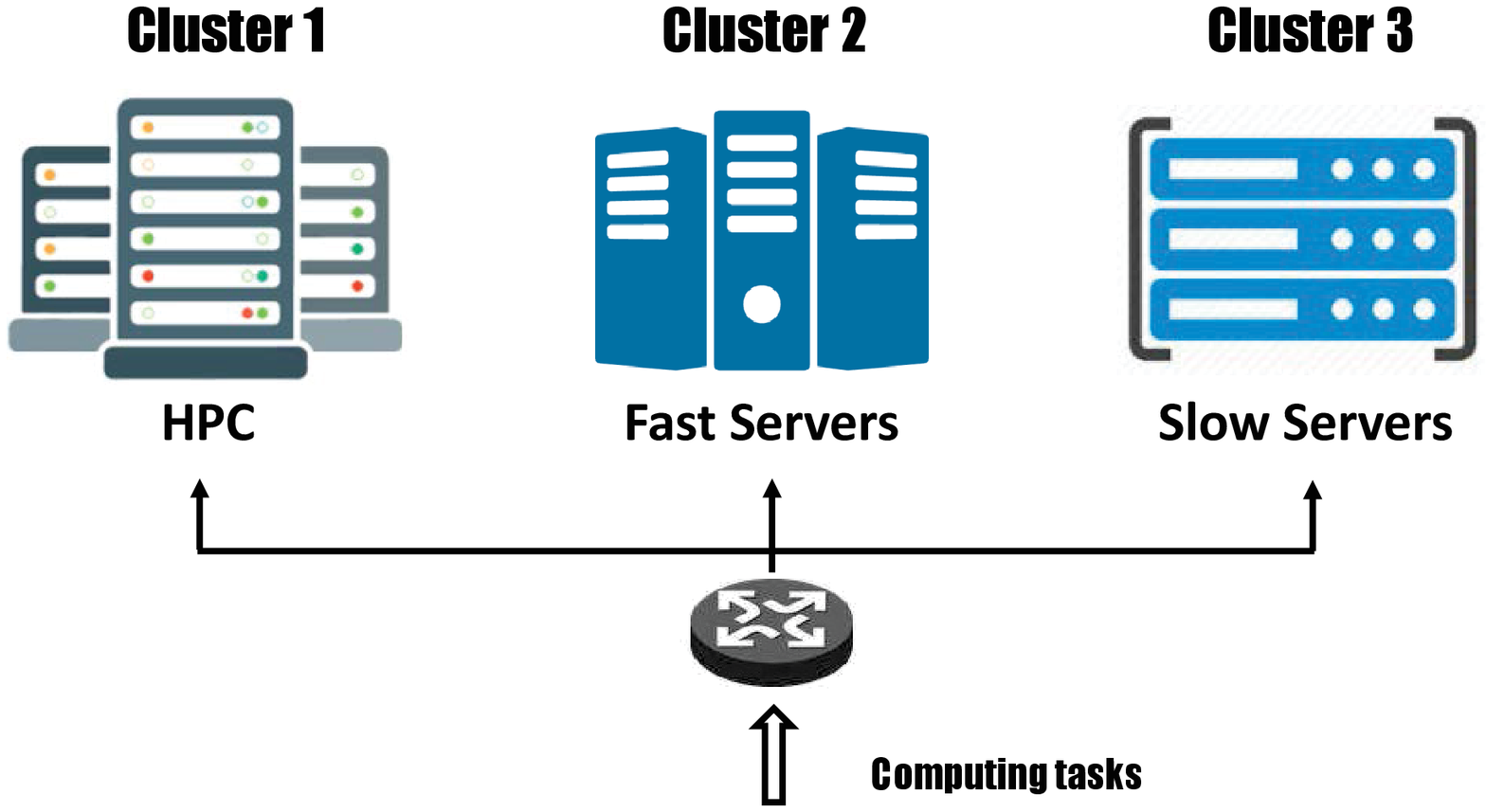}\label{fig_clustercomp}}
\caption{Motivations of group-server queues.}\label{fig_motivation}
\end{figure}

Similar resource allocation problems may exist in other systems such
as clustered wireless sensor networks \cite{Kumar09}, tiered web
site systems \cite{Urgaonkar05}, tiered tolling systems
\cite{Hua16}, etc. The common features of these problems can be well
captured by the group-server queue. How to efficiently schedule the
server groups to optimize the targeted performance metrics is an
important issue for both practitioners and queueing researchers. To
address this issue, we focus on finding the optimal on/off server
scheduling policy in a group-server queue to minimize the long-run
average cost.

\subsection{Related Research}
Service resource allocation problems in queueing systems are widely
studied in the literature. One stream of research focuses on the
\emph{service rate control} which aims to find the optimal service
rates such that the system average cost (holding cost plus operating
cost) can be minimized. This type of problems are mainly motivated
by improving the operational efficiency of computer and
telecommunication systems. For a server with a fixed service rate,
turning it on or off can be considered as a service rate control of
full or zero service rate. The optimality of threshold type policy,
such as $N$-policy, $D$-policy, and $T$-policy, has been studied in
single-server queueing systems with fixed service rate
\cite{Balachandran75,Heyman68,Heyman77}. Then, further studies
extend single-server systems to multi-server networks, such as
cyclic queues or tandem queues \cite{Rosberg82,Stidham93,Weber87},
where the optimality of bang-bang control or threshold type policy
is studied. Note that the bang-bang control means that even for the
case where the service rate can be chosen in a finite range, the
optimal rate is always at either zero or the maximum rate depending
on the system size (threshold). For complicated queueing networks,
such as Jackson networks, it has been proved that the bang-bang
control is optimal when the cost function has a linear form to
service rates, using the techniques of linear programming by Yao and
Schechner \cite{Yao89} and derivative approach by Ma and Cao
\cite{Ma94}, respectively. Recent works by Xia et al. further extend
such optimality structure from a linear cost function to a concave
one \cite{Xia13,Xia15}. Another line of studying the service rate
control problem is from a game theoretic viewpoint
\cite{Hassin03,Xia14}. We cannot enumerate all service rate control
studies due to space limit of this paper. A common feature of the
past studies is to characterize the structure of optimal rate
control policy in a variety of queueing systems.

The tradeoff between holding cost and operating cost is also a major
issue in some service systems with human servers. Thus, there exist
many studies on \emph{server scheduling problems} (or called
\emph{staffing problems}) which aim to dynamically adjust the number
of servers to minimize the average holding and operating costs. An
early work is by Yadin and Naor who study the dynamic on/off
scheduling policy of a server in an $M/G/1$ queue with a non-zero
setup time \cite{Yadin63}. Many other related works can be found in
this area and we just name a few \cite{Bell80,Fu00,Sobel69,Yoo96}.
To control the customer waiting time and improve the server
utilization, Zhang studies a congestion-based staffing (CBS) policy
for a multi-server service system motivated by the US-Canada
border-crossing stations \cite{Zhang09}. Servers in these studies
are assumed to be homogeneous. The CBS policy has a two-threshold
structure and can be considered as a generalization of the
multi-server queue with server vacations, which is an important
class of queueing models \cite{Tian06}.

\emph{Job assignment problem} in heterogenous servers is closely
related to the on/off server scheduling problem treated in this
paper. It has one queue and multiple servers. It focuses on optimal
scheduling of homogeneous jobs at heterogeneous servers with
different service rates and/or operating costs. In one class of
problems, the objective is to minimize the average waiting time of
jobs under the assumption that only holding cost is relevant and the
operating cost is sunk (i.e., not considered). In addition, when a
job is assigned to a server, it cannot be reassigned to other faster
(more desirable) server which becomes available later. Such a
problem is also called a \emph{slow-server problem} and can be used
to study the job routing policy in computer systems. One pioneering
work is by Lin and Kumar and they study the optimal control of an
M/M/2 queue with two heterogeneous servers. They prove that the
faster server should be always utilized while the slower one should
be utilized only when the queue length exceeds a computable
threshold value \cite{Lin84, Walrand84}. For the case with more than
two servers, it is shown that the fastest available server (FAS)
rule is optimal \cite{Millhiser16}. However, for other servers
except for FAS, it is difficult to directly extend the single
threshold (two-server system) to the multi-threshold optimality
(more than two-server system), although it looks intuitive. This is
because the system state becomes higher dimensional that makes the
dynamic programming based analysis very complicated. Weber proposes
a conjecture about the threshold optimality for multiple
heterogenous servers and shows that the threshold may depend on the
state of slower servers \cite{Weber93}. Rykov proves this conjecture
using dynamic programming \cite{Rykov01} and Luh and Viniotis prove
it using linear programming \cite{Luh02}, but their proofs are
opaque or incomplete \cite{deVericourt06}. Armony and Ward further
study a fair dynamic routing problem, in which the customer average
waiting time is minimized at the constraint of a fair idleness
portion of servers \cite{Armony10,Ward13}. Constrained Markov
decision processes and linear programming models are utilized to
characterize that the optimal routing policy asymptotically has a
threshold type in a limit regime with many servers \cite{Armony10}.
There are numerous studies on the slow-server problem from various
perspectives, which are summarized in \cite{Akgun14,Hassin15,Xu93}.

When job reassignment (also called job migration) is allowed, the
slow-server problem becomes trivial since it is optimal to always
assign jobs to available fastest servers. However, when the server
operating cost (such as power consumption) is considered, the job
assignment problem is not trivial even with job migration allowed.
In fact, both holding and operating costs should be considered in
practical systems, such as energy efficient data centers or cloud
computing facilities \cite{Fu16}. Akgun et al. give a comprehensive
study on this problem \cite{Akgun14}. They utilize the duality
between the individually optimal and socially optimal policies
\cite{Hassin85,Xu93} to prove the threshold optimality of
heterogenous servers for a clearing system (no arrivals) with or
without reassignment. They also prove the threshold optimality for
the less preferred server in a two-server system with customer
arrivals. It is shown that the preference of servers depends on not
only their service rates, but also the usage costs (operating
costs), holding costs, arrival rate, and the system state.

Under a cost structure with both holding and operating costs, the
job assignment problem for heterogeneous servers with customer
arrivals and job migration can be viewed as an equivalence to our
on/off server scheduling problem in a group-server queue. In this
paper, we characterize the structure of the optimal policy which can
significantly simplify the computation of the parameters of the
optimal on/off server schedule. In general, under the optimal
policy, a server group will not be turned on only if the ratio of
operating cost rate $c$ to service processing rate $\mu$ is smaller
than a computable quantity $G(n)$, called perturbation realization
factor. The perturbation realization factor depends on the number of
customers in the system (system state), the arrival rate, and the
cost function. We call this type of policy an \emph{index policy}
and it has a form of state-dependent multi-threshold. The term of
state-dependent means that the preference rankings of groups (the
order of server groups to be turned on) will change from one state
to another. However, under a reasonable condition of server group's
scale economies, the optimal index policy is reduced to a
state-independent multi-threshold policy, called \emph{the
$c/\mu$-rule}. This simple rule is easy to implement in practice and
complements the well-known $c\mu$-rule for polling queues. In a
polling queueing system, a single server serves multiple classes of
customers which form multiple queues and a polling policy prescribes
which queue to serve by the single server. In a group-server
queueing system, heterogeneous servers grouped into multiple classes
serve homogeneous customers (a single queue) and an on/off server
schedule prescribes which server group is turned on to serve the
single queue. Note that the ``$c$" in the $c \mu$-rule is the
customer waiting cost rate, while the ``$c$" in the $c/\mu$-rule is
the server operating cost rate. Due to the difference in the cost
rate $c$, it is intuitive that the customer class with the highest
$c \mu$ value should be served first and the server group with the
lowest $c /\mu$ value should be utilized first. Although these
results are kind of intuitive, the $c \mu$-rule was studied long
time ago but the $c/\mu$-rule was not well established until this
paper. This may be because of the more complexity caused by the
heterogeneous server system. Note that although the $c/\mu$ rankings
order the server's operating cost per unit of service rate,
different service rates impact the customer holding cost
differently. In contrast, in a polling queueing system, the only
cost difference between polling two different queues is the $c \mu$,
the holding cost moving out of the system per unit of time. Thus, a
static $c \mu$-rule can be established as an optimal policy to
minimize the system average cost.

The early work of the $c\mu$-rule can be traced back to Smith's
paper in 1956 under a deterministic and static setting
\cite{Smith56}. Under the $c\mu$-rule, the queue with larger $c\mu$
value should be served with higher priority. This rule is very
simple and easy to implement in practice. It stimulates numerous
extensions in the literature
\cite{Hirayama89,Kebarighotbi11,Kilmov74,Nain94,VanMieghem03}. Many
works aim to study similar properties to the $c\mu$-rule under
various queueing systems and assumptions. For example, Baras et al.
study the optimality of the $c\mu$-rule from 2 to $K$ queues with
linear costs and geometric service requirement
\cite{Baras85,Baras85b}, and Buyukkoc et al. revisit the proof of
the $c\mu$-rule in a simple way \cite{Buyukkoc85}. Van Mieghem
studies the asymptotic optimality of a generalized version of the
$c\mu$-rule with convex holding costs in heavy traffic settings
\cite{VanMieghem95}. This work is then extended by Mandelbaum and
Stolyar to a network topology \cite{Mandelbaum04}. Atar et al.
further study another generalized version called the $c\mu/\theta$
rule in an abandonment queue where $\theta$ is the abandonment rate
of impatient customers \cite{Atar10}. Recently, Saghafian and Veatch
study the $c\mu$-rule in a two-tiered queue \cite{Saghafian16}. In
contrast to the extensive studies on the $c\mu$-rule in the
literature, there are few studies on the $c/\mu$ rule for the
resource allocation in a single queue with heterogeneous servers.

\subsection{Our Contributions}
One of the significant differences between our work and relevant
studies in the literature is that the servers in our model are
heterogeneous and categorized into multiple groups, which makes the
model more general but more complex. Most heterogenous server models
in the literature may be viewed as a special case of our model, in
which each group has only one server. Thus, our model is more
applicable to large scale service systems such as data centers.
Moreover, we assume that there is an unlimited waiting room for
customers, which means that the dynamic policy is over an infinite
state space. To find the optimal policy over the infinite state
space is difficult. Thus, we aim to characterize the structure of
the optimal policy. While the holding cost in job assignment
problems is usually assumed to be linear, we assume the holding cost
can be any increasing convex function (a generalization of linear
function). We formulate this service resource allocation problem in
a group-server queue as a Markov decision process (MDP). Unlike the
traditional MDP approach, we utilize the sensitivity-based
optimization theory to characterize the structure of the optimal
policy and develop efficient algorithms to compute the optimal
policy and thresholds.

The main contribution of this paper can be summarized in the
following aspects.

\begin{itemize}
\item Index policy: The server preference (priority of being turned on) is
determined by an index $c-\mu G(n)$, where $G(n)$ is the
perturbation realization factor and it is computable and
state-dependent. Servers with more negative value of $c-\mu G(n)$
have more preference to be turned on. Servers with positive $c-\mu
G(n)$ should be kept off. The value of $G(n)$ will affect the
preference order of servers and depends on $n$, arrival rate, and
cost functions. We prove the optimality of this index policy and
show that $G(n)$ plays a fundamental role in determining the optimal
index policy.

\item The $c/\mu$-rule: Under the condition of scale economies for
server groups, the preference of servers can be determined by their
$c/\mu$ values, instead of $c-\mu G(n)$. Thus, the preference order
of servers is independent of $n$, arrival rate, and cost functions.
The server's on/off scheduling policy becomes the $c/ \mu$ rule.
Under this rule, the server with smaller $c/\mu$ should be turned on
with higher priority. Searching the optimal policy over an
infinite-dimensional mapping space is reduced to searching the
optimal multiple thresholds. Multi-threshold policy is easier to be
implemented in practice and robust.

\item Optimality structures: With the performance difference
formula, we derive a necessary and sufficient condition of the
optimal policy. The optimality of quasi bang-bang control is also
established. The monotone and convexity properties of performance
potentials and perturbation realization factors, which are
fundamental quantities during optimization, are established. With
these properties, the optimality of index policy and the $c/\mu$
rule is proved. The structure of optimal policy is characterized
well and the optimization complexity is reduced significantly.

\end{itemize}

Besides the theoretical contributions in the above aspects, using
the performance difference formula, we decompose the original
problem into an infinite number of integer linear programs. Based on
the structure of the optimal policy, we develop iterative algorithms
to find the optimal index policy or optimal multi-threshold policy.
Here, the $c/\mu$-rule can be utilized to simplify the search
algorithms significantly. These algorithms are similar to the policy
iteration in the traditional MDP theory and their performance is
demonstrated by numerical examples.

\subsection{Paper Organization}
The rest of the paper is organized as follows. In
Section~\ref{section_model}, a model of group-server queue is
developed to capture the heterogeneity of servers. An optimization
problem is formulated to determine the on/off server scheduling
policy of cost minimization. The analysis is presented in
Section~\ref{section_result}, where the structure of optimal index
policy is characterized based on the perturbation realization factor
of server groups. In Section~\ref{section_rule}, we derive the
$c/\mu$-rule and study the optimality of multi-threshold policy
under the condition of scale economies. In
Section~\ref{section_numerical}, we conduct numerical experiments to
gain the managerial insights and to show the efficiency of our
approach. Finally, the paper is concluded in
Section~\ref{section_conclusion} with a summary.

\section{Optimization Problem in Group-Server Queues}\label{section_model}
In this section, we describe the service resource allocation problem
in a group-server queue model. This model can be used to represent a
waiting line with heterogeneous servers classified into a finite
number of groups, which can also be called parallel-server systems
in previous studies \cite{Armony10}. Servers are homogeneous within
the group and are heterogeneous between groups. A group-server queue
is shown in Fig.~\ref{fig_GSqueue} and described as follows.

\begin{figure}[htbp]
\centering
\includegraphics[width=0.55\columnwidth]{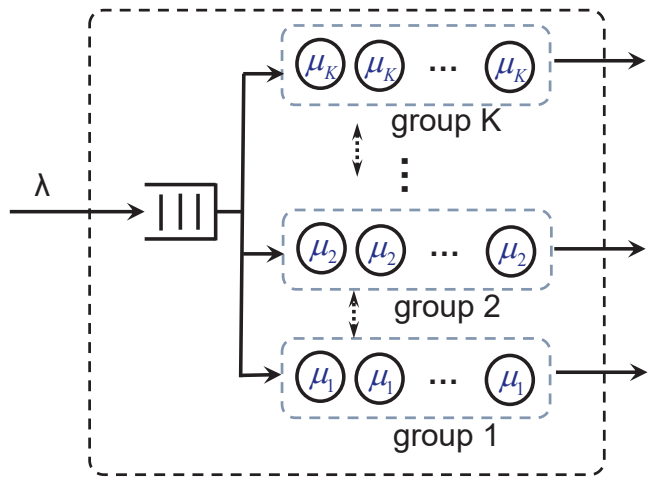}
\caption{An example of group-server queue model, where servers are
in parallel.}\label{fig_GSqueue}
\end{figure}

Customers arrive to a service station with multiple groups of
servers according to a Poisson process with rate $\lambda$. The
waiting room is infinite and the service discipline is
first-come-first-serve (FCFS). The service times of each server are
assumed to be independent and exponentially distributed. The
heterogeneous servers are classified into $K$ groups. Each group has
$M_k$ servers, which can be turned on or off, $k=1,2,\cdots,K$. When
a server in group $k$ is turned on, it will work at service rate
$\mu_k$ and consume an operating cost $c_k$ per unit of time.
Servers in the same group are homogeneous, i.e., they have the same
service rate $\mu_k$ and cost rate $c_k$, $k=1,2,\cdots,K$. Servers
in different groups are heterogeneous in $\mu_k$ and $c_k$. We
assume that servers in different groups offer the same service,
i.e., customers are homogeneous. In general, services offered by
different groups may be different and the connection of groups may
be cascaded, or even interconnected. Such a setting can be called a
\emph{group-server queueing network}. When a working server has to
be turned off, the customer being served at that server is
interrupted and transferred to the waiting room or another idle
server if available. Due to the memoryless property of the service
time, such an interruption has no effect on customer's remaining
service time.

The system state $n$ is defined as the number of customers in the
system (including those in service). The on/off status of servers
need not be included in the system state because free customer
migrations among servers are allowed in the model. Thus, the state
space is the nonnegative integer set $\mathbb N$, which is infinite.
At each state $n \in \mathbb N$, we determine the number of working
servers in each group, which can be represented by a $K$-dimensional
row vector as
\begin{equation}
\bm m:=(m_1,m_2,\cdots,m_K),
\end{equation}
where $m_k$ is the number of working servers in group $k$, i.e.,
$m_k \in \mathbb Z_{[0,M_k]}$, $k=1,2,\cdots,K$. We call $\bm
m$ the scheduling action at state $n$, according to the terminology
of MDPs. Thus, the action space is defined as
\begin{equation}
\mathbb M := \mathbb Z_{[0,M_1]} \times \mathbb Z_{[0,M_2]} \times
\cdots \times \mathbb Z_{[0,M_K]},
\end{equation}
where $\times$ is the Cartesian product. We assume that the system
has reached a steady state under a condition to be specified later
in Proposition~\ref{pro2}. Therefore, a stationary scheduling policy
$d$ is defined as a mapping from the infinite state space $\mathbb
N$ to the finite action space $\mathbb M$, i.e., $d: \mathbb N
\rightarrow \mathbb M$. If $d$ is determined, we will adopt action
$d(n)$ at every state $n$ and $d(n,k)$ is the number of working
servers of group $k$, where $n \in \mathbb N$ and $k=1,2,\cdots,K$.
All possible $d$'s form the policy space $\mathcal D$, which is an
infinite dimensional searching space.

When the system state is $n$ and the scheduling action $\bm m= d(n)$
is adopted, a \emph{holding cost} $h(n)$ and an \emph{operating
cost} $o(\bm m)$ will be incurred per unit of time. In the
literature, it is commonly assumed that the operating cost is
increasing with respect to (w.r.t.) the number of working servers.
In this paper, we define the linear operating cost function $o(\bm
m)$ as follows.
\begin{equation}
o(\bm m) := \sum_{k=1}^{K}m_k c_k = \bm m \bm c,
\end{equation}
where $\bm c:=(c_1,c_2,\cdots,c_K)^T$ is a $K$-dimensional column
vector and $c_k$ represents the operating cost rate per server in
group $k$. Therefore, the total cost rate function of the whole system
per time unit is defined as
\begin{equation}\label{eq_f}
f(n,\bm m) := h(n) + \bm m \bm c.
\end{equation}
We make the following assumption regarding the customer's holding
cost (waiting cost) and the server's setup cost (changeover cost).
\begin{assumption}\label{assumption1}
$h(n)$ is an increasing convex function w.r.t. $n$ and
$h(n)\rightarrow \infty$ when $n \rightarrow \infty$. The server's
setup cost is negligible.
\end{assumption}

Such a holding cost assumption is widely used in the literature
\cite{VanMieghem95} and represents the situation where the delay
cost grows rapidly as the system becomes more congested. For a
non-empty state $n$, if a scheduling action $\bm m$ is adopted, some
working servers may be turned off and services of customers at those
servers will be interrupted. These customers will be returned to the
waiting room or reassigned to other currently available working
servers. Such a rule is called \emph{non-resume transfer
discipline}. Since the setup cost for turning on a server (including
transferring a customer to an available server) is zero, we do not
have to keep track of the number of on (or off) servers for any
state. Otherwise, each server's status must be included in the
definition of the system state so that the state space will be
changed from one dimensional to multi-dimensional one, which is much
more complex.

Denote by $n_t$ the number of customers in the system at time $t
\geq 0$. The long-run average cost of the group-server queue under
policy $d$ can be written as
\begin{equation}\label{eq_eta}
\eta^d :=  \lim\limits_{T \rightarrow \infty}\mathbb E \left\{
\frac{1}{T} \int_{0}^{T} f(n_t,d(n_t))\dif t \right\}.
\end{equation}
The objective is to find the optimal policy $d^*$ such that the
associated long-run average cost is minimized. That is,
\begin{equation}\label{eq_prob}
d^* = \argmin \limits_{d \in \mathcal D} \{ \eta^d \}.
\end{equation}

\noindent\textbf{Remark 1.} It is worth noting that the scheduling
policy $d$ is a mapping from an infinite state space to a
$K$-dimensional finite action space. The state space is infinite and
the action space grows exponentially with $K$. Thus, the policy
space $\mathcal D$ to be searched is of infinite dimension.
Characterizing the optimal structure of such a mapping is
challenging but necessary in solving this optimization problem. A
major contribution of this paper is to accomplish this challenging
task and derive a simple $c/\mu$-rule as the optimal policy under a
certain condition.

\section{Optimal Policy Structure}\label{section_result}
The optimization problem (\ref{eq_prob}) can be modeled as a
continuous-time MDP with the long-run average cost criterion. The
traditional theory of MDPs is based on the well-known Bellman
optimality equation. However, in a multi-server queueing model with
infinite buffer, it may be difficult to characterize the structure
of the optimal policy using the traditional approach. Recently, Cao
proposed the sensitivity-based optimization (SBO) theory
\cite{Cao07}. This relatively new theory provides a new perspective
to optimize the performance of Markov systems. The key idea of the
SBO theory is to utilize the performance sensitivity information,
such as the performance difference or the performance derivative, to
conduct the optimization of stochastic systems. It may even treat
the stochastic optimization problems to which the dynamic
programming fails to offer the solution \cite{Cao07,Xia13,Xia15}. We
use the SBO theory to characterize the structure of the optimal
policy of the optimization problem (\ref{eq_prob}).

First, we study the structure of the action space. Owing to zero
setup cost, we should turn off any idle servers and obtain the
following result immediately.
\begin{proposition}\label{pro1}
The optimal action $\bm m$ at state $n$ satisfies $\bm m \bm 1\leq
n$, where $\bm 1$ is a column vector with proper dimension and its
all elements are 1's.
\end{proposition}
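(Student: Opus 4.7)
The plan is a short exchange argument that eliminates wasted idle capacity. Suppose for contradiction that some optimal policy $d$ has $d(n^{*})=\bm m$ with $\bm m \bm 1 > n^{*}$ at a state $n^{*}$ of positive stationary probability. Because there are only $n^{*}$ customers in the system but strictly more than $n^{*}$ servers are turned on, at least $\bm m \bm 1 - n^{*}$ of the on-servers are necessarily idle while still drawing operating cost $c_k$ per unit time.

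Next, I would construct a modified action $\bm m'$ at state $n^{*}$ by retaining exactly the $n^{*}$ servers that are currently serving customers under $\bm m$ and switching off any $\bm m \bm 1 - n^{*}$ of the idle ones, and define $d'$ to agree with $d$ at every other state. Two observations drive the comparison. First, because only idle servers are switched off, the aggregate departure rate out of state $n^{*}$ --- the sum of $\mu_k$ over currently-busy servers --- is unchanged, as are the Poisson arrival rate and the transition rates at every other state; consequently $d$ and $d'$ induce the \emph{same} CTMC on $\mathbb N$ and hence share the same stationary distribution $\pi$ (whose existence is guaranteed by the stability condition cited in Proposition~\ref{pro2}). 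Second, the instantaneous cost at state $n^{*}$ strictly decreases from $h(n^{*})+\bm m\bm c$ to $h(n^{*})+\bm m'\bm c$ whenever the switched-off servers have positive $c_k$, while costs at every other state are unaltered. This free reshuffling is licensed precisely by the zero-setup-cost and non-resume-transfer assumptions in the model, and the memoryless exponential service times ensure no service-in-progress is disrupted.

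Substituting into the long-run average (\ref{eq_eta}) yields $\eta^{d'} = \eta^{d} - \pi(n^{*})(\bm m-\bm m')\bm c < \eta^{d}$, contradicting the optimality of $d$. Iterating this modification across any remaining states violating the bound --- including transient states of zero stationary probability, where the modification yields an equality and therefore preserves optimality --- produces an optimal policy satisfying $\bm m \bm 1 \leq n$ for every $n$, which is precisely the claim.

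I do not anticipate any real obstacle: the substance of the proof is the ``pay-for-what-you-use'' observation that once setup is free, keeping an idle server on is strictly wasteful. The only care needed is the bookkeeping that separates states of positive versus zero stationary probability, and confirming that powering down only idle servers leaves the active service rate and hence the full transition structure of the chain intact; both are immediate from the model assumptions stated above.
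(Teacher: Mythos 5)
Your exchange argument is correct and is exactly the idea the paper relies on: the paper states Proposition~\ref{pro1} with no formal proof beyond the one-line remark that ``owing to zero setup cost, we should turn off any idle servers,'' and your write-up simply makes that observation rigorous (turning off only idle servers leaves the CTMC and hence $\bm \pi$ unchanged while strictly reducing the operating cost at the offending state, assuming $c_k>0$). Since this is the same approach, merely fleshed out, no further comparison is needed.
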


Note that if the server setup cost is not zero, this proposition may
not hold. From Proposition~\ref{pro1}, for every state $n$, we can
define the \emph{efficient action space} $\mathbb M_n$ as
\begin{equation}
\mathbb M_n := \{ \mbox{all } \bm m \in \mathbb M : \bm m \bm 1 \leq
n \}.
\end{equation}
A policy $d$ is said to be \emph{efficient} if $d(n) \in\mathbb M_n$
for every $n \in \mathbb N$. Accordingly, the \emph{efficient policy
space} $\mathcal D_e$ is defined as
\begin{equation}
\mathcal D_e := \{\mbox{all } d : d(n)\in \mathbb M_n, \forall n \in
\mathbb N\}.
\end{equation}
Therefore, in the rest of the paper, we limit our optimal policy
search in $\mathcal D_e$. For any efficient action $\bm m \in
\mathbb M_n$, the total service rate of the queueing system is $\bm
m \bm \mu$, where $\bm \mu$ is a $K$-dimensional column vector of
service rates defined as
\begin{equation}
\bm \mu := (\mu_1, \mu_2, \cdots, \mu_K)^T.
\end{equation}

For the continuous-time MDP formulated in (\ref{eq_prob}), we define
the \emph{performance potential} as follows \cite{Cao07}.
\begin{equation}\label{eq_g}
g(n) := \mathbb E\left\{ \int_{0}^{\infty} [f(n_t,d(n_t)) - \eta]
\dif t \Big | n_0 = n \right\}, \quad n \in \mathbb N,
\end{equation}
where $\eta$ is defined in (\ref{eq_eta}) and we omit the
superscript `$d$' for simplicity. The definition (\ref{eq_g})
indicates that $g(n)$ quantifies the long-run accumulated effect of
the initial state $n$ on the average performance $\eta$. In the
traditional MDP theory, $g(n)$ can also be understood as the
\emph{relative value function} or \emph{bias} \cite{Puterman94}.

By using the strong Markov property, we can decompose the
right-hand-side of (\ref{eq_g}) into two parts as follows.
\begin{eqnarray}\label{eq10}
g(n) &=& \mathbb E\{\tau\} [f(n,d(n)) - \eta] + \mathbb E\left\{
\int_{\tau}^{\infty} [f(n_t,d(n_t)) - \eta] \dif t \Big | n_0 = n
\right\} \nonumber\\
&=& \frac{1}{\lambda+d(n)\bm \mu} [f(n,d(n)) - \eta] +
\frac{\lambda}{\lambda+d(n)\bm \mu}\mathbb E\left\{
\int_{\tau}^{\infty} [f(n_t,d(n_t)) - \eta] \dif t \Big | n_{\tau} =
n+1 \right\} \nonumber\\
&& + \frac{d(n)\bm \mu}{\lambda+d(n)\bm \mu}\mathbb E\left\{
\int_{\tau}^{\infty} [f(n_t,d(n_t)) - \eta] \dif t \Big | n_{\tau} =
n-1 \right\},
\end{eqnarray}
where $\tau$ is the sojourn time at the current state $n$ and
$\mathbb E\{\tau\} =\frac{1}{\lambda+d(n)\bm \mu}$.

Combining (\ref{eq_g}) and (\ref{eq10}), we have the recursion
\begin{equation}\label{eq11}
\begin{array}{ll}
\left[\lambda+d(n)\bm \mu \right] g(n) = f(n,d(n)) - \eta + \lambda g(n+1) + d(n)\bm \mu g(n-1), & \quad n \geq 1; \\
\lambda g(n) = f(n,d(n)) - \eta + \lambda g(n+1), & \quad n = 0. \\
\end{array}
\end{equation}

We denote $\bm B$ as the infinitesimal generator of the Markov
process under an efficient policy $d \in \mathcal D_e$. Due to
nature of the continuous-time Markov process, the elements of $\bm
B$ are: for a state $n \geq 1$, $B(n,n) = -\lambda - d(n)\bm \mu$,
$B(n,n+1) = \lambda$, $B(n,n-1) = d(n)\bm \mu$, $B(n,:) = 0$
otherwise. Therefore, with such a birth-death process, $\bm B$ can
be written as the following form
\begin{equation}\label{eq_B}
\bm B = \left[
\begin{array}{cccccc}
-\lambda & \lambda & 0 & 0 & 0 & \cdots \\
d(1)\bm \mu & -\lambda-d(1)\bm \mu & \lambda & 0 & 0 & \cdots\\
0 & d(2)\bm \mu & -\lambda-d(2)\bm \mu & \lambda & 0 & \cdots\\
0 & 0 & d(3)\bm \mu & -\lambda-d(3)\bm \mu & \lambda & \cdots\\
\vdots & \vdots & \vdots & \ddots & \ddots & \ddots \\
\end{array}
\right].
\end{equation}
Hence, we can rewrite (\ref{eq11}) as follows.
\begin{equation}\label{eq12}
\begin{array}{ll}
-B(n,n) g(n) = f(n,d(n)) - \eta + B(n,n+1) g(n+1) + B(n,n-1) g(n-1), & n \geq 1; \\
-B(n,n) g(n) = f(n,d(n)) - \eta + B(n,n+1) g(n+1), & n = 0. \\
\end{array}
\end{equation}
We further denote $\bm g$ and $\bm f$ as the column vectors whose
elements are $g(n)$'s and $f(n,d(n))$'s, respectively. We can
rewrite (\ref{eq12}) in a matrix form as below.
\begin{equation}\label{eq_poisson}
\bm f - \eta \bm 1 + \bm B \bm g = \bm 0.
\end{equation}
The above equation is also called \emph{the Poisson equation} for
continuous-time MDPs with long-run average criterion \cite{Cao07}.
As $\bm g$ is called performance potential or relative value
function, we can set $g(0)=\zeta$ and recursively solve $g(n)$ based
on (\ref{eq12}), where $\zeta$ is any real number. Using matrix
operations, we can also evaluate $\bm g$ by solving the infinite
dimensional Poisson equation (\ref{eq_poisson}) through numerical
computation techniques, such as RG-factorizations \cite{Li04}.

For the stability of the queueing system, we impose a \emph{sufficient
condition} as follows.
\begin{proposition}\label{pro2}
If there exists a constant $\tilde{n}$ and for any $n \geq
\tilde{n}$, we always have $d(n) \bm \mu > \lambda$, then this
group-server queue under policy $d$ is stable and its steady state
distribution $\bm \pi$ exists.
\end{proposition}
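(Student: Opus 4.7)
The plan is to exploit the fact that the system under any efficient stationary policy $d$ is a continuous-time birth-death process on $\mathbb N$ with constant birth rate $\lambda$ and state-dependent death rate $d(n)\bm\mu$, so I will establish positive recurrence by directly verifying summability of the geometric-type expression for the stationary measure, together with a uniform geometric tail bound coming from the finiteness of the action space $\mathbb M$.

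First, I would observe that $\bm B$ in (\ref{eq_B}) is the generator of a birth-death chain and write down the local balance (detailed balance) equations $\pi(n)\lambda = \pi(n+1)\, d(n+1)\bm\mu$ for $n\geq 0$. Iterating this gives the standard unnormalized invariant measure
\begin{equation*}
\pi(n) = \pi(0)\prod_{i=1}^{n}\frac{\lambda}{d(i)\bm\mu},\qquad n\geq 1,
\end{equation*}
so the existence of $\bm\pi$ reduces to proving $S:=\sum_{n=0}^{\infty}\prod_{i=1}^{n}\frac{\lambda}{d(i)\bm\mu}<\infty$ and then normalizing.

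Next, I would split the tail of $S$ at $\tilde n$. For $n<\tilde n$ there are only finitely many factors, each bounded because $d(i)\bm\mu\geq 0$ and the empty product is $1$, so the partial sum $\sum_{n=0}^{\tilde n-1}\prod_{i=1}^{n}\frac{\lambda}{d(i)\bm\mu}$ is finite (with the convention that any zero factor simply truncates the tail, which only helps stability). For $n\geq \tilde n$ the hypothesis gives $d(n)\bm\mu>\lambda$, hence each ratio $\lambda/(d(n)\bm\mu)$ lies in $[0,1)$. The key quantitative point is that $d(n)$ takes values in the finite set $\mathbb M$, so
\begin{equation*}
\rho:=\max_{\bm m\in\mathbb M,\; \bm m\bm\mu>\lambda}\frac{\lambda}{\bm m\bm\mu}<1
\end{equation*}
is a single constant strictly less than $1$ that dominates every ratio for $n\geq\tilde n$. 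Therefore $\prod_{i=1}^{n}\frac{\lambda}{d(i)\bm\mu}\leq C\rho^{\,n-\tilde n}$ for $n\geq\tilde n$, where $C$ absorbs the fixed prefix up to $\tilde n$, and the geometric series converges. Combining both pieces yields $S<\infty$, so $\bm\pi:=\bm\pi/S$ is a bona-fide probability distribution satisfying $\bm\pi\bm B=\bm 0$, which is exactly the stability claim.

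I do not expect a genuine obstacle here; the one place that requires a small amount of care is handling states where $d(n)\bm\mu$ might equal $0$ (all servers off) for $n<\tilde n$, which could make the product ill defined. I would handle this by noting that if $d(n+1)\bm\mu=0$ then state $n+1$ drains only by arrivals above it, but local balance $\pi(n)\lambda=\pi(n+1)\cdot 0$ forces $\pi(n)=0$ for such $n$; equivalently, the chain is effectively reduced to the sub-chain above $\tilde n$ (communicating upward), and the tail argument goes through unchanged. Thus the only substantive ingredient is the uniform bound $\rho<1$ guaranteed by the finiteness of $\mathbb M$, and positive recurrence (and hence existence of $\bm\pi$) follows.
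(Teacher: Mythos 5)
Your argument is correct, and it is worth noting that the paper itself supplies no proof of Proposition~\ref{pro2} at all: the statement is presented as a standard stability condition and immediately used. Your write-up therefore fills a gap rather than duplicating or diverging from an existing argument. The route you take is the canonical one for birth--death chains: cut (detailed balance) equations give the unnormalized invariant measure $\pi(n)=\pi(0)\prod_{i=1}^{n}\lambda/(d(i)\bm\mu)$, and the finiteness of the action space $\mathbb M$ upgrades the pointwise inequality $d(n)\bm\mu>\lambda$ into a uniform ratio bound $\rho<1$, yielding a geometric tail and summability. That uniformity step is the one genuinely necessary observation (pointwise strict inequality alone would not suffice if the ratios could approach $1$), and you identify it correctly. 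Your handling of states with $d(n)\bm\mu=0$ below $\tilde n$ is also right: such states are transient, the recurrent class lies above the last zero-service state, and the cut equations force $\pi=0$ there. The only item you leave implicit is non-explosiveness of the continuous-time chain, which is needed before ``summable invariant measure'' can be read as ``positive recurrent''; here it is immediate because all transition rates are bounded by $\lambda+\sum_{k}M_k\mu_k$, so a one-line remark would close that point.
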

Proposition~\ref{pro2} ensures that $\bm \pi$ exists under a proper
selection of policy $d$. Thus, we have
\begin{equation}
\begin{array}{l}
\bm \pi \bm B = \bm 0, \\
\bm \pi \bm 1 = 1.  \\
\end{array}
\end{equation}
The long-run average cost of the system can be written as
\begin{equation}
\eta = \bm \pi \bm f.
\end{equation}

Suppose the scheduling policy is changed from $d$ to $d'$, where $d,
d' \in \mathcal D_e$. Accordingly, all the associated quantities
under the new policy $d'$ will be denoted by $\bm B'$, $\bm f'$,
$\bm \pi'$, $\eta'$, etc. Obviously, we have $\bm \pi' \bm B' = \bm
0$, $\bm \pi' \bm 1 = 1$, and $\eta' = \bm \pi' \bm f'$.

Left-multiplying $\bm \pi'$ on both sides of (\ref{eq_poisson}), we
have
\begin{equation}
\bm \pi' \bm f - \eta \bm \pi' \bm 1 + \bm \pi' \bm B \bm g = 0.
\end{equation}
Using $\bm \pi' \bm B' = \bm 0$, $\bm \pi' \bm 1 =
1$, and $\eta' = \bm \pi' \bm f'$ , we can write (18) as
\begin{equation}
\eta' - \bm \pi' \bm f' + \bm \pi' \bm f - \eta + \bm \pi' \bm B \bm
g - \bm \pi' \bm B' \bm g = 0,
\end{equation}
which gives the \emph{performance difference formula} for the
continuous-time MDP as follows \cite{Cao07}.
\begin{center}
\begin{boxedminipage}{1\columnwidth}
\begin{equation}\label{eq_diff}
\eta' - \eta = \bm \pi' [(\bm B' - \bm B)\bm g + (\bm f' - \bm f)].
\end{equation}
\vspace{-13pt}
\end{boxedminipage}
\end{center}

Equation (\ref{eq_diff}) provides the sensitivity information about
the system performance, which can be used to achieve the
optimization. It clearly quantifies the performance change due to a
policy change. Although the exact value of $\bm \pi'$ may not be
known for every new policy $d'$, all its entries are always
nonnegative and even positive for those positive recurrent states.
Therefore, if we choose a proper new policy (with associated $\bm
B'$ and $\bm f'$) such that the elements of the column vector
represented by the square bracket in (\ref{eq_diff}) are always
nonpositive, then we have $\eta'-\eta \leq 0$ and the long-run
average cost of the system will be reduced. If there is at least one
negative element in the square bracket for a positive recurrent
state, then we have $\eta'-\eta < 0$ and the system average cost
will be reduced strictly. This is the main idea for policy
improvement based on the performance difference formula
(\ref{eq_diff}).

Using (\ref{eq_diff}), we examine the sensitivity of scheduling
policy on the long-run average cost of the group-server queue.
Suppose that we choose a new policy $d'$ which is the same as the
current policy $d$ except for the action at a particular state $n$.
For this state $n$, policy $d$ selects action $\bm m$ and policy
$d'$ selects action $\bm m'$, where $\bm m,\bm m' \in \mathbb M_n$.
Substituting (\ref{eq_f}) and (\ref{eq_B}) into (\ref{eq_diff}), we
have
\begin{eqnarray}\label{eq_diff2}
\eta' - \eta &=& \bm \pi' [(\bm B' - \bm B)\bm g + (\bm f' - \bm f)] \nonumber\\
&=& \pi'(n)[(\bm B'(n,:) - \bm B(n,:))\bm g + (f'(n) - f(n))]\nonumber\\
&=& \pi'(n)\left[\sum_{k=1}^{K}(m'_k-m_k)\mu_k (g(n-1) - g(n)) +
(\bm m' \bm c - \bm m \bm c)\right] \nonumber\\
&=& \pi'(n)\sum_{k=1}^{K}(m'_k-m_k)\left[c_k - \mu_k (g(n) -
g(n-1))\right],
\end{eqnarray}
where $g(n)$ is the performance potential of the system under the
current policy $d$. The value of $g(n)$ can be numerically computed
based on (\ref{eq_poisson}) or online estimated based on
(\ref{eq_g}). Details can be found in Chapter 3 of \cite{Cao07}.

For the purpose of analysis, we define a new quantity $G(n)$ as
below.
\begin{equation}\label{eq_G}
G(n) := g(n) - g(n-1), \quad n=1,2,\cdots.
\end{equation}
Note that $G(n)$ quantifies the performance potential difference
between neighboring states $n$ and $n-1$. According to the theory of
perturbation analysis (PA) \cite{Cao94,Ho91}, $G(n)$ is called
\emph{perturbation realization factor} (PRF) which measures the
effect on the average performance when the initial state is
perturbed from $n-1$ to $n$. For our specific problem
(\ref{eq_prob}), in certain sense, $G(n)$ can be considered as the
benefit of reducing the long-run average holding cost due to
operating a server. In the following analysis, $G(n)$ plays a
fundamental role of directly determining the optimal scheduling policy
for the group-server queue.

Based on the recursive relation of $\bm g$ in (\ref{eq11}), we can
also develop the following recursions for computing $G(n)$'s.
\begin{lemma}
The PRF $G(n)$ can be computed by the following recursive relations
\begin{equation}\label{eq_G-Recur}
\begin{array}{l}
G(n+1) = \frac{d(n)\bm \mu}{\lambda} G(n) + \frac{\eta -
f(n,d(n))}{\lambda}, \quad n \geq 1,\\
G(1) = \frac{\eta - f(0,d(0))}{\lambda}.\\
\end{array}
\end{equation}
\end{lemma}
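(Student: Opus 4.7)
The plan is to derive both recursions directly from the Poisson equation for $\bm g$ in (\ref{eq11}), simply by rearranging terms and invoking the definition $G(n)=g(n)-g(n-1)$.

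First I would handle the boundary case $n=0$. Equation (\ref{eq11}) at $n=0$ reads $\lambda g(0) = f(0,d(0)) - \eta + \lambda g(1)$. Moving $\lambda g(0)$ to the right-hand side yields $\lambda[g(1)-g(0)] = \eta - f(0,d(0))$, and dividing by $\lambda$ gives $G(1) = \tfrac{\eta - f(0,d(0))}{\lambda}$, which is exactly the stated initial condition.

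Next, for $n \geq 1$, I would take the first line of (\ref{eq11}) and split the coefficient on the left as $\lambda + d(n)\bm\mu$ to rewrite the equation as
\begin{equation*}
\lambda[g(n+1)-g(n)] = d(n)\bm\mu\,[g(n)-g(n-1)] + \eta - f(n,d(n)).
\end{equation*}
Recognizing the two differences as $G(n+1)$ and $G(n)$ respectively, and then dividing by $\lambda$, produces
\begin{equation*}
G(n+1) = \frac{d(n)\bm\mu}{\lambda}\,G(n) + \frac{\eta - f(n,d(n))}{\lambda},
\end{equation*}
which is the claimed recursion.

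There is no real obstacle here; the lemma is essentially a bookkeeping rewrite of the Poisson equation in terms of first differences of the potentials. The only thing worth checking explicitly is that the scalar quantity $d(n)\bm\mu$ (the total service rate under action $d(n)$) is indeed the coefficient multiplying $g(n-1)$ in (\ref{eq_B}) and (\ref{eq11}), so that the grouping into $G(n) = g(n)-g(n-1)$ is valid termwise. Once that is noted, the proof is a two-line calculation for each case.
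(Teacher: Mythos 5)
Your proposal is correct and follows exactly the same route as the paper's proof: both cases are obtained by rearranging the two lines of the Poisson recursion (\ref{eq11}) and substituting the definition $G(n)=g(n)-g(n-1)$. Nothing is missing.
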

\begin{proof}
From the second equation in (\ref{eq11}), we have
\begin{equation}
G(1) = g(1) - g(0) = \frac{\eta - f(0,d(0))}{\lambda}.
\end{equation}
Using the first equation in (\ref{eq11}), we have
\begin{equation}
\lambda(g(n+1) - g(n)) = d(n)\bm \mu (g(n) - g(n-1)) + \eta -
f(n,d(n)), \quad n \geq 1.
\end{equation}
Substituting (\ref{eq_G}) into the above equation, we directly have
\begin{equation}
G(n+1) = \frac{d(n)\bm \mu}{\lambda} G(n) + \frac{\eta -
f(n,d(n))}{\lambda}, \quad n \geq 1.
\end{equation}
Thus, the recursions for $G(n)$ are proved.
\end{proof}

Substituting (\ref{eq_G}) into (\ref{eq_diff2}), we obtain the
following performance difference formula in terms of $G(n)$ when the scheduling action
at a single state $n$ is changed from $\bm m$ to $\bm m'$.
\begin{center}
\begin{boxedminipage}{1\columnwidth}
\begin{equation}\label{eq_diff3}
\eta' - \eta = \pi'(n)\sum_{k=1}^{K}(m'_k-m_k) \left(c_k - \mu_k
G(n)\right).
\end{equation}
\vspace{-13pt}
\end{boxedminipage}
\end{center}

This difference formula can be extended to a general case when $d$
is changed to $d'$, i.e. $d(n)$ is changed to $d'(n)$ for all $n \in
\mathbb N$. Substituting the associated $(\bm B, \bm f)$ and $(\bm
B',\bm f')$ into (\ref{eq_diff}) yields
\begin{center}
\begin{boxedminipage}{1\columnwidth}
\begin{equation}\label{eq_diff4}
\eta' - \eta = \sum_{n \in \mathbb
N}\pi'(n)\sum_{k=1}^{K}(d'(n,k)-d(n,k)) \left(c_k - \mu_k
G(n)\right).
\end{equation}
\vspace{-13pt}
\end{boxedminipage}
\end{center}

Based on (\ref{eq_diff4}), we can directly obtain a condition for generating an improved policy as follows.
\begin{theorem}\label{theorem1}
If a new policy $d' \in \mathcal D_e$ satisfies
\begin{equation}\label{eq25}
(d'(n,k) - d(n,k))\left({c_k} - {\mu_k}G(n)\right) \leq 0
\end{equation}
for all $k=1,2,\cdots,K$ and $n \in \mathbb N$, then $\eta' \leq
\eta$. Furthermore, if for at least one state-group pair $(n,k)$,
the inequality in (\ref{eq25}) strictly holds, then $\eta' < \eta$.
\end{theorem}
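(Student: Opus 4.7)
The theorem is essentially a sign analysis applied to the performance difference formula~(\ref{eq_diff4}) established immediately above it, so the proof should be short and direct; no new construction is required. First I would simply read off from~(\ref{eq_diff4}):
\begin{equation*}
\eta' - \eta \;=\; \sum_{n \in \mathbb N} \pi'(n) \sum_{k=1}^{K} \bigl(d'(n,k) - d(n,k)\bigr)\bigl(c_k - \mu_k G(n)\bigr),
\end{equation*}
and exploit the fact that the stationary distribution $\bm\pi'$ is a nonnegative vector, so $\pi'(n) \geq 0$ for every state $n$. By hypothesis~(\ref{eq25}), every factor $(d'(n,k)-d(n,k))(c_k - \mu_k G(n))$ is nonpositive, hence each summand $\pi'(n)(d'(n,k) - d(n,k))(c_k - \mu_k G(n))$ is $\leq 0$. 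Summing nonpositive quantities yields $\eta' - \eta \leq 0$, which proves the first claim.

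For the strict inequality, I would argue as follows. Suppose (\ref{eq25}) holds strictly for some state-group pair $(n_0,k_0)$, so that $(d'(n_0,k_0) - d(n_0,k_0))(c_{k_0} - \mu_{k_0} G(n_0)) < 0$. To propagate this strict negativity through to the sum, I need $\pi'(n_0) > 0$. Since $d' \in \mathcal D_e$ is an efficient policy satisfying the stabilizing condition of Proposition~\ref{pro2}, the Markov chain on $\mathbb N$ with generator $\bm B'$ is positive recurrent with stationary distribution $\bm\pi'$. The generator~(\ref{eq_B}) is that of an irreducible birth-death chain (all birth rates equal $\lambda>0$, and the death rates $d'(n)\bm\mu$ are strictly positive whenever any server is on, which must eventually be the case by Proposition~\ref{pro2}), so every state communicates with state $0$ and therefore $\pi'(n) > 0$ for all $n \in \mathbb N$. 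In particular $\pi'(n_0) > 0$, and combining the strictly negative $(n_0,k_0)$-summand with the nonpositivity of all other summands yields $\eta' - \eta < 0$.

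The only nontrivial point, and the part that most warrants care in the writing, is the positive-recurrence justification that $\pi'(n_0) > 0$. On an infinite state space this is not automatic and must be traced back to Proposition~\ref{pro2}, together with the irreducibility of the birth-death structure of $\bm B'$. If one is willing to absorb this into the standing efficiency/stability assumption on $\mathcal D_e$, the remainder of the proof is a one-line sign check in~(\ref{eq_diff4}); if not, a brief parenthetical remark verifying irreducibility and invoking Proposition~\ref{pro2} is all that is needed.
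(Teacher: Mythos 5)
Your proposal is correct and follows essentially the same route as the paper: a direct sign analysis of the performance difference formula~(\ref{eq_diff4}) using the nonnegativity of $\bm\pi'$, with strict positivity of $\pi'(n_0)$ giving the strict inequality. The only difference is that you spell out the irreducibility/positive-recurrence justification for $\pi'(n_0)>0$, which the paper simply asserts via ergodicity; this is a welcome but minor elaboration, not a different approach.
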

\begin{proof}
Since (\ref{eq25}) holds for every $n$ and $k$ and $\pi'(n)$ is
always positive for ergodic processes, it follows from
(\ref{eq_diff4}) that $\eta' - \eta \leq 0$. Thus, the first part of
the theorem is proved. The second part can be proved using a similar
argument.
\end{proof}

Theorem~\ref{theorem1} provides a way to generate improved policies
based on the current feasible policy. For the system under the
current policy $d$, we compute or estimate $G(n)$'s based on its
definition. For every state $n$ and server group $k$, if we find
$\frac{c_k}{\mu_k}
> G(n)$, then we choose a smaller $d'(n,k)$; if we find
$\frac{c_k}{\mu_k} < G(n)$, then we choose a larger $d'(n,k)$
satisfying the condition $d'(n)\bm 1 \leq n$, as stated by
Proposition~\ref{pro1}. Therefore, according to
Theorem~\ref{theorem1}, the new policy $d'$ obtained from this
procedure will perform better than the current policy $d$. This
procedure can be repeated to continually reduce the system average
cost.

Note that the condition above is only a sufficient one to generate
improved policies. Now, we establish a \emph{necessary and
sufficient condition} for the optimal scheduling policy as follows.
\begin{theorem}\label{theorem2}
A policy $d^*$ is optimal if and only if its element $d^*(n)$, i.e.,
$(d^*(n,1),\cdots,d^*(n,K))$, is the solution to the following
integer linear programs
\begin{equation}\label{eq_ilp}
\mbox{\hspace{-2cm}\emph{ILP Problem:}} \hspace{1cm}\left\{
\begin{array}{l}
\min\limits_{d(n,k)}\left\{ \sum_{k=1}^{K}d(n,k)(c_k - \mu_k G^*(n))
\right\} \\
\mbox{s.t.} \quad 0 \leq d(n,k) \leq M_k, \\
\hspace{1cm}\sum_{k=1}^{K}d(n,k) \leq n,
\end{array}
\right.
\end{equation}
for every state $n \in \mathbb N$, where $G^*(n)$ is the PRF defined
in (\ref{eq_G}) under policy $d^*$.
\end{theorem}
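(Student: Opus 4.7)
The plan is to use the performance difference formula (\ref{eq_diff4}) as the workhorse in both directions. The key observation is that once one fixes the optimal policy $d^*$, the summand in (\ref{eq_diff4}) for state $n$ depends on $d'(n,\cdot)$ and $d^*(n,\cdot)$ only through the ILP objective $\sum_k d(n,k)(c_k - \mu_k G^*(n))$, while the constraints $0 \le d(n,k) \le M_k$ and $\sum_k d(n,k) \le n$ are exactly the conditions defining the efficient action set $\mathbb M_n$. Thus Theorem~\ref{theorem2} is essentially a state-by-state decomposition of the global optimality condition into a family of independent ILPs indexed by $n$.

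For sufficiency, I would assume that $d^*$ satisfies (\ref{eq_ilp}) at every state with its own PRF $G^*(n)$. Pick any competitor $d' \in \mathcal D_e$. Applying (\ref{eq_diff4}) with $d = d^*$ gives
\begin{equation*}
\eta' - \eta^* \;=\; \sum_{n\in\mathbb N}\pi'(n)\sum_{k=1}^{K}\bigl(d'(n,k)-d^*(n,k)\bigr)\bigl(c_k - \mu_k G^*(n)\bigr).
\end{equation*}
Because $d'(n,\cdot)$ is feasible for the ILP at $n$ and $d^*(n,\cdot)$ is a minimizer, the inner sum is nonnegative for every $n$; since $\pi'(n)\ge 0$, it follows that $\eta'\ge \eta^*$, so $d^*$ is optimal.

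For necessity, I would argue by contradiction. Suppose $d^*$ is optimal but there is some state $n_0$ at which $d^*(n_0)$ does not solve (\ref{eq_ilp}). Then there exists a feasible action $\bm m^\star \in \mathbb M_{n_0}$ with
\begin{equation*}
\sum_{k=1}^K \bigl(m^\star_k - d^*(n_0,k)\bigr)\bigl(c_k - \mu_k G^*(n_0)\bigr) < 0.
\end{equation*}
Define $d'$ to agree with $d^*$ everywhere except at $n_0$, where $d'(n_0)=\bm m^\star$. Applying the single-state version (\ref{eq_diff3}) yields $\eta' - \eta^* = \pi'(n_0)\cdot(\text{negative quantity})$. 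As long as $\pi'(n_0) > 0$, this contradicts the optimality of $d^*$. Hence $d^*(n_0)$ must solve (\ref{eq_ilp}).

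The main obstacle I foresee is the infinite state space and the positivity of $\pi'(n_0)$ needed in the necessity step. One must ensure that the perturbed policy $d'$ is still ergodic (so Proposition~\ref{pro2} applies and $\pi'$ is well-defined with $\pi'(n_0)>0$). Since $d'$ differs from $d^*$ only at the single finite state $n_0$, and the stability of the queue is governed by the drift condition $d(n)\bm\mu > \lambda$ for all sufficiently large $n$ (which $d^*$ satisfies because $\eta^*$ is finite), the perturbed $d'$ inherits the same tail behavior and remains stable; the modified chain is still irreducible on the recurrent class containing $n_0$, so $\pi'(n_0)>0$. Once this stability/positivity detail is dispatched, both directions follow directly from the difference formula and the observation that the global minimization decouples across states.
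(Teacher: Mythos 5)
Your proposal is correct and follows essentially the same route as the paper's own proof: sufficiency by substituting the ILP minimality inequality into the difference formula (\ref{eq_diff4}), and necessity by contradiction via a single-state perturbation combined with $\pi'(n)>0$. Your additional care about verifying that the perturbed policy remains stable (so that $\pi'(n_0)>0$ is legitimate) is a detail the paper leaves implicit, but it does not change the argument.
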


\begin{proof}
First, we prove the sufficient condition. Suppose $d^*(n)$ is the
solution to the ILP problem (\ref{eq_ilp}), $\forall n \in \mathbb
N$. For any other policy $d' \in \mathcal D_e$, we know that it must
satisfy the constraints in (\ref{eq_ilp}) and
\begin{equation}\label{eq27}
\sum_{k=1}^{K}d'(n,k)(c_k - \mu_k G^*(n)) \geq
\sum_{k=1}^{K}d^*(n,k)(c_k - \mu_k G^*(n)), \quad \forall n \in
\mathbb N,
\end{equation}
since $d^*(n)$ is the solution to (\ref{eq_ilp}). Substituting
(\ref{eq27}) into (\ref{eq_diff4}), we obtain
\begin{eqnarray}
\eta' - \eta^* = \sum_{n \in \mathbb
N}\pi'(n)\sum_{k=1}^{K}(d'(n,k)-d^*(n,k))(c_k - \mu_k G^*(n)) \geq
0,
\end{eqnarray}
for any $d' \in \mathcal D_e$. Therefore, $\eta^*$ is the minimal
average cost of the scheduling problem (\ref{eq_prob}) and $d^*$ is
the optimal policy. The sufficient condition is proved.

Second, we use contradiction to prove the necessary condition.
Assume that the optimal policy $d^*$ is not always the solution to
the ILP problem (\ref{eq_ilp}). That is, at least for a particular
state $n$, there exists another $d(n)$ which is the solution to
(\ref{eq_ilp}) and satisfies
\begin{equation}\label{eq28}
\sum_{k=1}^{K}d(n,k)(c_k - \mu_k G^*(n)) <
\sum_{k=1}^{K}d^*(n,k)(c_k - \mu_k G^*(n)).
\end{equation}
Therefore, we can construct a new policy $d'$ as follows: It chooses
the action $d(n)$ at the state $n$ only and chooses the same actions
prescribed by $d^*$ at other states. Substituting $d'$ and $d^*$
into (\ref{eq_diff4}) gives
\begin{eqnarray}
\eta' - \eta^* = \pi'(n)\sum_{k=1}^{K}(d(n,k)-d^*(n,k))(c_k - \mu_k
G^*(n)).
\end{eqnarray}
Substituting (\ref{eq28}) into the above equation and using the fact
$\pi'(n)>0$ for any positive recurrent state $n$, we have $\eta' <
\eta^*$, which contradicts the assumption that $d^*$ is the optimal
policy. Thus, the assumption does not hold and $d^*$ should be the
solution to (\ref{eq_ilp}). The necessary condition is proved.
\end{proof}

Theorem~\ref{theorem2} indicates that the original scheduling
problem (\ref{eq_prob}) can be converted into a series of ILP
problem (\ref{eq_ilp}) at every state $n \in \mathbb N$. However, it
is impossible to directly solve an \emph{infinite} number of ILP
problems since the state space is infinite. To get around this
difficulty, we further investigate the structure of the solution to
these ILPs.

By analyzing (\ref{eq_ilp}), we can find that the solution to the
ILP problem must have the following structure:
\begin{itemize}
\item For those groups with $c_k - \mu_k G^*(n) > 0$, we have $d^*(n,k) = 0$;
\item For those groups with $c_k - \mu_k G^*(n) < 0$, we repeat letting $d^*(n,k) = M_k$ or as large as possible in
an ascending order of $c_k - \mu_k G^*(n)$, under the constraint
$\sum_{k=1}^{K}d(n,k) \leq n$.
\end{itemize}
Then, we can further specify the above necessary and sufficient
condition of the optimal policy as follows.
\begin{theorem}\label{theorem3}
A policy $d^*$ is optimal if and only if its element $d^*(n)$
satisfies the condition: If $G^*(n) > \frac{c_k}{\mu_k}$, then
$d^*(n,k) = M_k \wedge (n-\sum_{l=1}^{k-1}d^*(n,l))$; otherwise,
$d^*(n,k) = 0$, for $k=1,2,\cdots,K$, where the index of server
groups should be renumbered in an ascending order of $c_k-\mu_k
G^*(n)$ at the current state $n$, $n \in \mathbb N$.
\end{theorem}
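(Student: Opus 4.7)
The plan is to derive Theorem~\ref{theorem3} as an explicit structural consequence of Theorem~\ref{theorem2}. Theorem~\ref{theorem2} already gives the necessary and sufficient condition in the form that $d^*(n)$ must solve the integer linear program (\ref{eq_ilp}) at every $n\in\mathbb{N}$. Since that ILP has a separable linear objective $\sum_k d(n,k)(c_k - \mu_k G^*(n))$ together with one bounded-resource constraint $\sum_k d(n,k) \leq n$ and simple box constraints $0\leq d(n,k)\leq M_k$, it is a bounded-variable knapsack-type problem whose optimum is characterized by a greedy rule on the coefficients $\alpha_k(n) := c_k - \mu_k G^*(n)$. My proof would just translate this greedy rule into the statement of the theorem.

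First, I would fix a state $n$ and, without loss of generality, relabel the groups so that $\alpha_1(n) \leq \alpha_2(n) \leq \cdots \leq \alpha_K(n)$ (this is the renumbering mentioned in the statement). I would then argue, by an elementary exchange argument, that any optimal solution $d^*(n)$ of the ILP must satisfy: $d^*(n,k)=0$ whenever $\alpha_k(n)\geq 0$, because raising such a coordinate can only increase the objective, while lowering it cannot decrease it below zero; and, dually, whenever $\alpha_k(n) < 0$, each unit assigned to group $k$ strictly decreases the objective, so in that case $d^*(n,k)$ must be pushed up to its upper limit $M_k$ unless the capacity constraint $\sum_l d^*(n,l)\leq n$ binds. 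When the constraint binds, the ascending ordering ensures we must fill earlier (more negative) groups first: if any optimal solution left room in group $k$ while using group $k'>k$ with $\alpha_{k'}(n)\geq \alpha_k(n)$, a unit swap would weakly improve (strictly, unless $\alpha_k=\alpha_{k'}$), contradicting optimality up to the tie. This justifies exactly the formula $d^*(n,k) = M_k \wedge\bigl(n-\sum_{l=1}^{k-1} d^*(n,l)\bigr)$ for $\alpha_k(n)<0$, i.e.\ whenever $G^*(n) > c_k/\mu_k$, and $d^*(n,k)=0$ otherwise.

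For the converse direction, I would assume $d^*$ is constructed by the rule in the theorem and verify that it solves (\ref{eq_ilp}) at every $n$, which by Theorem~\ref{theorem2} yields optimality. Given any feasible $d(n)$, writing the objective as $\sum_{k:\alpha_k(n)<0}\alpha_k(n)\,d(n,k) + \sum_{k:\alpha_k(n)\geq 0}\alpha_k(n)\,d(n,k)$, I would bound the first sum from below by showing that among all feasible ways to distribute at most $n$ units across the boxes $[0,M_k]$, the ascending-order fill maximizes the amount placed against the most negative coefficients; the second sum is bounded from below by $0$, which $d^*$ attains by construction. Combining the two inequalities gives the required optimality of $d^*(n)$ in the ILP.

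The only delicate point is the tie case $\alpha_k(n)=0$ (i.e.\ $G^*(n)=c_k/\mu_k$), where both $d^*(n,k)=0$ and $d^*(n,k)>0$ yield the same objective value in (\ref{eq_ilp}); the statement's convention ``otherwise $d^*(n,k)=0$'' is then simply a canonical choice among equivalent optima, which is harmless since any tie-breaking rule produces an optimal $d^*$. I expect this boundary case, together with ensuring that the renumbering is internally consistent with the capacity-constrained greedy argument (especially when several groups share the same $\alpha_k(n)$), to be the most notation-heavy part of the write-up; the main substantive content, however, is the standard greedy-exchange argument for bounded-variable linear knapsack problems, which goes through cleanly here.
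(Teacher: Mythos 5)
Your proposal is correct and follows essentially the same route as the paper: the paper does not give a formal proof of Theorem~\ref{theorem3} at all, but derives it as an immediate consequence of Theorem~\ref{theorem2} by informally observing (in the two bullet points preceding the theorem) that the separable ILP (\ref{eq_ilp}) is solved by the greedy ascending-order fill on the coefficients $c_k-\mu_k G^*(n)$. Your exchange argument and your handling of the tie case $G^*(n)=c_k/\mu_k$ simply make explicit what the paper leaves as an assertion, so the proposal is a faithful (and somewhat more rigorous) rendering of the paper's argument.
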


With Theorem~\ref{theorem3}, we can see that the optimal policy can
be fully determined by the value of $c_k-\mu_k G^*(n)$. Such a policy form
is called an \emph{index policy} and $c_k - \mu_k G^*(n)$
can be viewed as an index, which has similarity to the
\emph{Gittins' index} or \emph{Whittle's index} for solving
multi-armed bandit problems \cite{Gittins11,Whittle88}.

Theorem~\ref{theorem3} also reveals the \emph{quasi bang-bang
control} structure of the optimal policy $d^*$. That is, the optimal
number of working servers in group $k$ is either 0 or $M_k$, except
for the group that first violates the efficient condition in
Proposition~\ref{pro1}. For any state $n$, after the group index is
renumbered according to Theorem 3, the optimal action always has the
following form
\begin{equation}\label{eq34}
d^*(n) = (M_1,M_2,\cdots,M_{\hat k-1},M_{\hat k} \wedge
(n-\sum_{l=1}^{\hat k-1}M_l), 0,0,\cdots,0),
\end{equation}
where $\hat k$ is the first group index that violates the constraint
$\sum_{l=1}^{\hat k}M_l \leq n$ or $c_{\hat k+1} - \mu_{\hat k+1}
G^*(n) < 0$, i.e.,
\begin{equation}\label{eq_hatK}
\hat{k} := \min \left\{k: \sum_{l=1}^{k}M_l > n, \mbox{ or }
\frac{c_{k+1}} {\mu_{k+1}} \geq G^*(n) \right\}.
\end{equation}
Therefore, $\hat k$ can also be viewed as a \emph{threshold} and we
have $\hat k \in \{0,1,\cdots,K\}$. Such a policy can be called a
\emph{quasi threshold policy} with threshold $\hat k$. Under this
policy, the number of servers to be turned on for each group is as
follows.
\begin{equation}
\left\{
\begin{array}{ll}
d^*(n,l) = M_l, \quad &\mbox{if }l<\hat k;\\
d^*(n,l) = 0, \quad &\mbox{if }l>\hat k;\\
d^*(n,l) = M_{\hat k} \wedge (n-\sum_{l=1}^{\hat k-1}M_l), \quad &\mbox{if }l=\hat k.\\
\end{array}
\right.
\end{equation}
If threshold $\hat k$ is determined, $d^*(n)$ is also determined.
Thus, finding $d^*(n)$ becomes finding the threshold $\hat k$, which
simplifies the search for the optimal policy. However, we note that
the index order of groups is renumbered according to the ascending
value of $c_k - \mu_k G^*(n)$, which is varied at different state
$n$ or different value of $G^*(n)$. On the other hand, the value of
$\hat k$ also depends on the system state $n$, $n \in \mathbb N$.
Therefore, the index order of groups and the threshold $\hat k$ will
both vary at different state $n$, which makes the quasi threshold
policy not easy to implement in practice. To further characterize
the optimal policy, we explore its other structural properties.

Difference formula (\ref{eq_diff4}) and Theorem~\ref{theorem3}
indicate that $c_k - \mu_k G(n)$ is an important quantity to
differentiate the server groups. If $c_k - \mu_k G(n) < 0$, turning
on servers in group $k$ can reduce the system average cost. Group
$k$ can be called an \emph{economic group} for the current system.
Therefore, we define $\mathbb K_n$ as the economic group set at the
current state $n$
\begin{equation}\label{eq_K}
\mathbb K_n := \left\{ k : G(n) > \frac{c_k}{\mu_k} \right\}.
\end{equation}
We should turn on severs in the economic groups $\mathbb K_n$ as
many as possible, subject to $d(n) \bm 1 \leq n$. Note
that $G(n)$ reflects the reduction of the holding cost due to
operating a server, from a long-run average perspective.

With Theorems~\ref{theorem2} and \ref{theorem3}, the optimization
problem (\ref{eq_prob}) for each state $n$ can be solved by finding
the solution to each subproblem in (\ref{eq_ilp}) with the structure
of quasi bang-bang control or quasi threshold form like
(\ref{eq34}). However, since the number of ILPs is infinite, we need
to establish the monotone property of PRF $G(n)$ which can convert
the infinite state space search to a finite state space search for
the optimal policy. To achieve this goal, we first establish the
convexity of performance potential $g^*(n)$.

\begin{theorem}\label{theorem4}
The performance potential $g^*(n)$ under the optimal policy $d^*$ is
increasing and convex in $n$.
\end{theorem}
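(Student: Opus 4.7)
The plan is to establish monotonicity and convexity of $g^*$ by approximating it with the value iterates of a uniformized discrete-time MDP and showing by induction that each iterate is itself increasing and convex in $n$. First I would uniformize the continuous-time process with rate $\nu := \lambda + \sum_{k=1}^{K}M_k\mu_k$, which turns the average-cost Bellman equation into
\begin{equation*}
\eta^* + \nu g^*(n) = \min_{\bm m \in \mathbb{M}_n}\Bigl\{ f(n,\bm m) + \lambda g^*(n+1) + \bm m\bm\mu\, g^*(n-1) + (\nu-\lambda-\bm m\bm\mu)\, g^*(n) \Bigr\},
\end{equation*}
and invoke standard ergodic MDP results under Assumption~\ref{assumption1} and Proposition~\ref{pro2} to conclude that the value iterates $v_k := T^k v_0$ with $v_0 \equiv 0$ converge, after centering by $v_k(0)$, pointwise to $g^*$.

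The inductive step decomposes the Bellman operator $T$ into three primitive operations: (a) adding the one-step cost $f(n,\bm m) = h(n) + \bm m\bm c$, which is increasing and convex in $n$ by Assumption~\ref{assumption1}; (b) taking, for each fixed $\bm m$, a convex combination of the shifted functions $v_k(n-1)$, $v_k(n)$, $v_k(n+1)$ with weights $\bm m\bm\mu/\nu$, $(\nu-\lambda-\bm m\bm\mu)/\nu$, $\lambda/\nu$, which preserves both properties; and (c) minimizing over $\bm m \in \mathbb{M}_n$. Parts (a) and (b) are routine. The main obstacle is (c), since the pointwise minimum of convex functions is in general not convex. To overcome this I would exploit the linear-in-$\bm m$ structure already identified in Theorem~\ref{theorem2}: the action-dependent part of the integrand equals $\bm m\bigl(\bm c - \bm\mu [v_k(n) - v_k(n-1)]\bigr)$, so the minimizer depends on $n$ only through the monotone (by inductive hypothesis) first difference. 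Combined with the quasi bang-bang form of Theorem~\ref{theorem3}, this monotone dependence lets me pair an optimizer $\bm m^\dagger$ at $n$ with feasible actions at $n\pm 1$ that differ from $\bm m^\dagger$ by at most one unit in one group, and verify directly the second-difference inequality $Tv_k(n+1)+Tv_k(n-1) \geq 2\,Tv_k(n)$; monotonicity $Tv_k(n+1)\geq Tv_k(n)$ is easier and follows from $\mathbb{M}_n \subseteq \mathbb{M}_{n+1}$ together with the increase of $h$.

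Passing to the pointwise limit preserves both inequalities and yields that $g^*$ is increasing and convex. As a sanity check one can also couple three copies of the system initialized at $n-1$, $n$, and $n+1$ driven by common uniformized Poisson clocks for arrivals and potential service completions; provided $d^*(n)\bm\mu$ is nondecreasing in $n$ (which is implicit in Theorem~\ref{theorem3}), the three trajectories remain pathwise ordered and the convexity of $h$ lifts through a sample-path cumulative-cost comparison to that of $g^*$. The coupling is intuitively appealing but the uniformized value-iteration route is cleaner because it separates the monotonicity and convexity invariants at every iterate and avoids the circularity between the monotonicity of $d^*\bm\mu$ and that of $G^*$.
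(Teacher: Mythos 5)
Your overall strategy is exactly the paper's: uniformize with rate $\nu=\Lambda=\lambda+\sum_k M_k\mu_k$, run value iteration from $v_0\equiv 0$, and show by induction that each iterate is increasing and convex, then pass to the limit. Steps (a) and (b) of your decomposition match the paper's equation for $\Lambda g_{l+1}(n)$. The problem is in step (c), which is the only hard step, and as you describe it the argument would fail. To prove $Tv_k(n+1)+Tv_k(n-1)\geq 2\,Tv_k(n)$ you must keep the \emph{true} optimizers $\bm m^*_{n+1}$ and $\bm m^*_{n-1}$ at the outer states (so that $Tv_k(n\pm1)$ are evaluated exactly) and substitute suboptimal feasible actions at the \emph{middle} state $n$ (so that $2\,Tv_k(n)$ is bounded above). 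You propose the reverse: fixing an optimizer $\bm m^\dagger$ at $n$ and pairing it with feasible actions at $n\pm1$. Substituting a feasible action at $n\pm1$ only upper-bounds $Tv_k(n\pm1)$, which is the wrong direction for the desired inequality. The paper's choice is two actions $\bm\alpha_1,\bm\alpha_2\in\mathbb M_n$ with $\bm\alpha_1+\bm\alpha_2=\bm m^*_{n+1}+\bm m^*_{n-1}$; this summation identity is what makes the action-dependent linear terms $\bm m\bm c$ and $\bm m\bm\mu$ cancel or telescope so that only second differences of $h$ and $v_k$ remain.

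Your supporting claim that optimizers at adjacent states "differ by at most one unit in one group" is also false in general: the inner minimization is the ILP of Theorem~\ref{theorem2}, and a unit increase in the first difference $v_k(n)-v_k(n-1)$ can flip the sign of $c_j-\mu_j\,[v_k(n)-v_k(n-1)]$ for a group $j$, switching $M_j$ servers on at once. The feasibility-set argument for monotonicity is likewise pointed the wrong way: $\mathbb M_n\subseteq\mathbb M_{n+1}$ makes the minimum at $n+1$ \emph{smaller}, not larger; what is actually needed (and what the paper does) is a case analysis on whether $\bm m^*_{n+1}\in\mathbb M_n$, constructing $\bm\alpha=\bm m^*_{n+1}-\bm e_1$ when it is not. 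Your coupling remark is a reasonable sanity check but, as you note yourself, it presupposes monotonicity of $d^*\bm\mu$, which is downstream of this theorem. The fix is mechanical once the direction of substitution is corrected, but as written the central inequality does not follow.
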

\begin{proof} We prove this theorem by induction. Since the problem (\ref{eq_prob}) is a continuous time
MDP with the long-run average cost criterion, the optimal policy
$d^*$ should satisfy the \emph{Bellman optimality equation} as
follows.
\begin{equation}
\min\limits_{\bm m \in \mathbb M_n} \left\{ f(n,\bm m) - \eta^* +
\bm B(n,:|\bm m)\bm g^* \right\} = 0, \quad \forall n \in \mathbb N,
\end{equation}
where $\bm B(n,:|\bm m)$ is the $n$th row of the infinitesimal
generator $\bm B$ defined in (\ref{eq_B}) if action $\bm m$ is adopted.

Define $\Lambda$ as any constant that is larger than the maximal
absolute value of all elements in $\bm B$ under any possible policy.
Without loss of generality, we further define
\begin{equation}\label{eq_lambda}
\Lambda := \sup\limits_{n,\bm m}\{|B(n,n|\bm m)|\} = \lambda +
\sum_{k=1}^{K} M_k\mu_{k}.
\end{equation}
Then we can use the Bellman optimality equation to derive the
recursion for value iteration as follows.
\begin{eqnarray}\label{eq_gl+1}
\hspace{-0.6cm}\Lambda g_{l+1}(n) &=& \min\limits_{\bm m \in \mathbb
M_n}\left\{ f(n,\bm m) - \eta_{l} + \sum_{n' \in \mathbb N}
B(n,n'|\bm m)g_l(n') + \Lambda
g_l(n) \right\} \nonumber\\
&=& \min\limits_{\bm m \in \mathbb M_n}\left\{ h(n) + \bm m \bm c -
\eta_{l} + (\Lambda-\lambda-\bm m \bm \mu ) g_l(n) + \lambda
g_l(n+1) + \bm m \bm \mu g_l(n-1) \right\},
\end{eqnarray}
where the second equality holds because of using (\ref{eq_f}) and
(\ref{eq12}), $g_l(n)$ is the performance potential (relative value
function) of state $n$ at the $l$th iteration, and $\eta_l$ is the
long-run average cost at the $l$th iteration. By defining
\begin{equation}\label{eq_A}
A(n) := h(n) + \bm m \bm c - \eta_{l} + (\Lambda-\lambda-\bm m \bm
\mu ) g_l(n) + \lambda g_l(n+1) + \bm m \bm \mu g_l(n-1),
\end{equation}
we can rewrite (\ref{eq_gl+1}) as
\begin{equation}\label{eq_gl+2}
\Lambda g_{l+1}(n) = \min\limits_{\bm m \in \mathbb M_n}\left\{ A(n)
\right\}.
\end{equation}

It is well known from the MDP theory \cite{Puterman94} that the
initial value of $g_0$ can be any value. Therefore, we set $g_0(n) =
0$ for all $n$, which satisfies the increasing and convex property.
Now we use the induction to establish this property. Suppose
$g_l(n)$ is increasing and convex in $n$. We need to show that
$g_{l+1}(n)$ also has this property. If done, we know that $g_l(n)$
is increasing and convex in $n$ for all $l$. In addition, since the
value iteration converges to the optimal value function, i.e.,
\begin{equation}\label{eq_gl}
\lim\limits_{l \rightarrow \infty}g_l(n) = g^*(n), \quad n \in
\mathbb N,
\end{equation}
Then, we can conclude that $g^*(n)$ is increasing and convex in $n$.
The induction is completed in two steps.

First step, we prove the increasing property of $g_{l+1}(n)$ or
$g_{l+1}(n+1) - g_{l+1}(n) \geq 0$. Using (\ref{eq_gl+2}), we have
\begin{equation}\label{eq35}
\Lambda [g_{l+1}(n+1) - g_{l+1}(n)] = \min\limits_{\bm m \in \mathbb
M_{n+1}} \{ A(n+1) \} - \min\limits_{\bm m \in \mathbb
M_n}\left\{A(n) \right\}.
\end{equation}
Denote $\bm m^*_{n+1}$ as the optimal action in $\mathbb M_{n+1}$,
which achieves the minimum for $A(n+1)$ in (\ref{eq_gl+2}). Below,
we want to use $\bm m^*_{n+1}$ to remove the operators
$\min\limits_{\bm m \in \mathbb M_n}\{\cdot\}$ in (\ref{eq35}),
which has to be discussed in two cases by concerning whether $\bm
m^*_{n+1} \in \mathbb M_n$.

\noindent Case \textcircled{1}: $\bm m^*_{n+1} \in \mathbb M_n$, we
can directly use $\bm m^*_{n+1}$ to replace $\min\limits_{\bm m \in
\mathbb M_n}\{\cdot\}$ in (\ref{eq35}) and obtain
\begin{eqnarray}\label{eq36}
\Lambda [g_{l+1}(n+1) - g_{l+1}(n)] &\geq& A(n+1)|_{\bm m^*_{n+1}} -
A(n)|_{\bm
m^*_{n+1}}\nonumber\\
&&\hspace{-4cm}= h(n+1) + \bm m^*_{n+1} \bm c - \eta_{l} +
(\Lambda-\lambda-\bm m^*_{n+1}
\bm \mu ) g_l(n+1) + \lambda g_l(n+2) + \bm m^*_{n+1} \bm \mu g_l(n) \nonumber\\
&& \hspace{-3.6cm}  - [h(n) + \bm m^*_{n+1} \bm c - \eta_{l} +
(\Lambda-\lambda-\bm m^*_{n+1} \bm \mu ) g_l(n) + \lambda
g_l(n+1) + \bm m^*_{n+1} \bm \mu g_l(n-1) ] \nonumber\\
&&\hspace{-4cm}= [h(n+1) - h(n)] + (\Lambda-\lambda-\bm m^*_{n+1}
\bm \mu )
[g_l(n+1)-g_l(n)] \nonumber\\
&& \hspace{-3.6cm} + \lambda[g_l(n+2)-g_l(n+1)] + \bm m^*_{n+1} \bm
\mu [g_l(n)-g_l(n-1)].
\end{eqnarray}
From Assumption~\ref{assumption1} that $h(n)$ is increasing in $n$,
the first term of RHS of (\ref{eq36}) is non-negative. Moreover, we
already assume that $g_l(n)$ is increasing in $n$. We also know that
$(\Lambda-\lambda-\bm m^*_{n+1} \bm \mu ) \geq 0$ from the
definition (\ref{eq_lambda}). Therefore, with (\ref{eq36}), we have
$g_{l+1}(n+1) - g_{l+1}(n) \geq 0$ in this case.

\noindent Case \textcircled{2}: $\bm m^*_{n+1} \notin \mathbb M_n$,
it means that $\bm m^*_{n+1} \bm 1 = n+1 > n$ violating the
condition in Proposition~\ref{pro1}. In this case, we select an
action $\bm \alpha$ as below.
\begin{equation}
\bm \alpha = \bm m^*_{n+1} - \bm e_1,  \quad \bm \alpha \in \mathbb
M_n,
\end{equation}
where $\bm e_1$ is a zero vector except one proper element is 1 such
that every element of $\bm \alpha$ is nonnegative. We use $\bm
\alpha$ to replace $\min\limits_{\bm m \in \mathbb M_n}\{\cdot\}$ in
(\ref{eq35}) and obtain
\begin{eqnarray}\label{eq36b}
\Lambda [g_{l+1}(n+1) - g_{l+1}(n)] &\geq& A(n+1)|_{\bm m^*_{n+1}} -
A(n)|_{\bm \alpha}\nonumber\\
&&\hspace{-4cm}= h(n+1) + \bm m^*_{n+1} \bm c - \eta_{l} +
(\Lambda-\lambda-\bm m^*_{n+1}
\bm \mu ) g_l(n+1) + \lambda g_l(n+2) + \bm m^*_{n+1} \bm \mu g_l(n) \nonumber\\
&& \hspace{-3.6cm}  - [h(n) + \bm \alpha \bm c - \eta_{l} +
(\Lambda-\lambda-\bm \alpha \bm \mu ) g_l(n) + \lambda
g_l(n+1) + \bm \alpha \bm \mu g_l(n-1) ] \nonumber\\
&&\hspace{-4cm}= [h(n+1) - h(n)] + \bm e_1 \bm c +
\lambda[g_l(n+2)-g_l(n+1)] + (\Lambda-\lambda-\bm m^*_{n+1} \bm \mu
) g_l(n+1) \nonumber\\
&&\hspace{-3.6cm} + \bm m^*_{n+1} \bm \mu g_l(n) -
[(\Lambda-\lambda-\bm \alpha \bm \mu ) g_l(n) + \bm \alpha \bm \mu
g_l(n-1)] .
\end{eqnarray}
Since $g_l(n)$ is increasing in $n$, we have
\begin{eqnarray}
(\Lambda-\lambda-\bm m^*_{n+1} \bm \mu ) g_l(n+1) + \bm m^*_{n+1}
\bm \mu g_l(n) \geq (\Lambda-\lambda ) g_l(n); \\
- [(\Lambda-\lambda-\bm \alpha \bm \mu ) g_l(n) + \bm \alpha \bm \mu
g_l(n-1)] \geq -(\Lambda-\lambda ) g_l(n).
\end{eqnarray}
Substituting the above equations into (\ref{eq36b}), we have
\begin{equation}
\Lambda [g_{l+1}(n+1) - g_{l+1}(n)] \geq [h(n+1) - h(n)] + \bm e_1
\bm c + \lambda[g_l(n+2)-g_l(n+1)] > 0.
\end{equation}
Combining cases \textcircled{1}\&\textcircled{2}, we always have
$g_{l+1}(n+1) - g_{l+1}(n) \geq 0$ and the increasing property of
$g_{l+1}(n)$ is proved.

Second step, we prove the convex property of $g_{l+1}(n)$ or
$g_{l+1}(n+1) - 2g_{l+1}(n) + g_{l+1}(n-1) \geq 0$. We denote $\bm
m^*_{n-1}$ as the optimal action in $\mathbb M_{n-1}$, which
achieves the minimum for $A(n-1)$ in (\ref{eq_gl+2}). From
(\ref{eq_gl+2}), we have
\begin{eqnarray}\label{eq43a}
\Lambda [g_{l+1}(n+1) - 2g_{l+1}(n) + g_{l+1}(n-1)] &=&
\hspace{-0.5cm} \min\limits_{\bm m \in \mathbb M_{n+1}} \{ A(n+1) \}
- 2 \min\limits_{\bm m \in \mathbb M_{n}}\left\{A(n) \right\} +
\min\limits_{\bm m \in \mathbb M_{n-1}} \{ A(n-1)
\}\nonumber\\
&=& A(n+1)|_{\bm m^*_{n+1}} - 2 \min\limits_{\bm m \in \mathbb
M_n}\left\{A(n) \right\} + A(n-1)|_{\bm m^*_{n-1}}.
\end{eqnarray}
Similarly, we select actions to replace the operators
$\min\limits_{\bm m \in \mathbb M_n}\{\cdot\}$ in (\ref{eq43a}). The
actions are generated from $\bm m^*_{n+1}$ and $\bm m^*_{n-1}$ and
they should belong to the feasible set $\mathbb M_n$. We select two
actions $\bm \alpha_1$ and $\bm \alpha_2$ that satisfy
\begin{equation}\label{eq_alpha}
\bm \alpha_1 + \bm \alpha_2 = \bm m^*_{n+1} + \bm m^*_{n-1}, \quad
\bm \alpha_1, \bm \alpha_2 \in \mathbb M_n.
\end{equation}
For example, when $n$ is large enough and the condition in
Proposition~\ref{pro1} is always satisfied, we can simply select
$\bm \alpha_1 = \bm m^*_{n+1}$ and $\bm \alpha_2 = \bm m^*_{n-1}$.
For other cases where the condition in Proposition~\ref{pro1} may be
violated, we can properly adjust the number of working servers based
on $\bm m^*_{n+1}$ and $\bm m^*_{n-1}$ and always find feasible $\bm
\alpha_1$ and $\bm \alpha_2$. It is easy to verify and we omit the
details for simplicity. Therefore, we use $\bm \alpha_1$ and $\bm
\alpha_2$ to replace the operators $\min\limits_{\bm m \in \mathbb
M_n}\{\cdot\}$ of the two $A(n)$'s in (\ref{eq43a}) and obtain
\begin{equation}\label{eq43e}
\Lambda [g_{l+1}(n+1) - 2g_{l+1}(n) + g_{l+1}(n-1)] \geq
A(n+1)|_{\bm m^*_{n+1}} - A(n)|_{\bm \alpha_1} - A(n)|_{\bm
\alpha_2} + A(n-1)|_{\bm m^*_{n-1}}.
\end{equation}
Substituting (\ref{eq_A}) into the above equation, we have
\begin{eqnarray}
\Lambda [g_{l+1}(n+1) - 2g_{l+1}(n) + g_{l+1}(n-1)] &\geq&
[h(n+1)-2h(n)+h(n-1)] + [\bm m^*_{n+1}\bm c - (\bm \alpha_1+\bm
\alpha_2)\bm c \nonumber\\
&& \hspace{-8cm} + \bm m^*_{n-1}\bm c] + (\Lambda-\lambda-\bm
m^*_{n+1}\bm \mu)g_l(n+1) - (2\Lambda-2\lambda-\bm \alpha_1\bm \mu -
\bm \alpha_2\bm \mu) g_l(n)  + (\Lambda-\lambda-\bm m^*_{n-1}\bm \mu)g_l(n-1) \nonumber\\
&& \hspace{-8cm} + \lambda[g_l(n+2)-2g_l(n)+g_l(n-1)] + \bm
m^*_{n+1} \bm \mu g_l(n) - (\bm \alpha_1\bm \mu + \bm \alpha_2\bm
\mu) g_l(n-1) + \bm m^*_{n-1}\bm \mu g_l(n-2).
\end{eqnarray}
Since $h(n)$ is convex in $n$ (Assumption~\ref{assumption1}), we
have $h(n+1)-2h(n)+h(n-1) \geq 0$. Moreover, it is assumed that
$g_l(n)$ is convex in $n$, hence $g_l(n+2)-2g_l(n)+g_l(n-1) \geq 0$
holds. By further utilizing (\ref{eq_alpha}), we can derive
\begin{equation}
\bm m^*_{n+1}\bm c - (\bm \alpha_1+\bm \alpha_2)\bm c  + \bm
m^*_{n-1}\bm c = 0, \nonumber
\end{equation}
\begin{equation}
(\Lambda-\lambda-\bm m^*_{n+1}\bm \mu)g_l(n+1) -
(2\Lambda-2\lambda-\bm \alpha_1\bm \mu - \bm \alpha_2\bm \mu) g_l(n)
+ (\Lambda-\lambda-\bm m^*_{n-1}\bm \mu)g_l(n-1) \geq 0, \nonumber
\end{equation}
\begin{equation}
\bm m^*_{n+1} \bm \mu g_l(n) - (\bm \alpha_1\bm \mu + \bm
\alpha_2\bm \mu) g_l(n-1) + \bm m^*_{n-1}\bm \mu g_l(n-2) \geq 0.
\nonumber
\end{equation}
Therefore, we know $g_{l+1}(n+1) - 2g_{l+1}(n) + g_{l+1}(n-1) \geq
0$ and the convex property of $g_{l+1}(n)$ is proved.

In summary, we have proved that $g_l(n)$ is increasing and convex in
$n$ by induction, $\forall l \in \mathbb N$. Therefore, $g^*(n)$ is
also increasing and convex in $n$ by (\ref{eq_gl}). This completes
the proof.
\end{proof}

Since $G(n) = g(n) - g(n-1)$, from Theorem~\ref{theorem4}, we can
directly derive the following theorem about the monotone property of
$G^*(n)$.
\begin{theorem}\label{theorem5}
The PRF $G^*(n)$ under the optimal policy $d^*$ is nonnegative and
increasing in $n$.
\end{theorem}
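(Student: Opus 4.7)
The plan is to derive Theorem~\ref{theorem5} as a direct corollary of Theorem~\ref{theorem4}, since $G^*(n)$ is defined as the first difference $g^*(n)-g^*(n-1)$, and both nonnegativity and monotonicity of a first-difference sequence correspond exactly to monotonicity and convexity of the underlying sequence.

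First, I would recall the definition $G^*(n) := g^*(n) - g^*(n-1)$ from (\ref{eq_G}) applied under the optimal policy $d^*$. For nonnegativity, I would invoke the increasing property of $g^*$ established in Theorem~\ref{theorem4}: since $g^*(n) \geq g^*(n-1)$ for every $n \geq 1$, the difference $G^*(n)$ is automatically $\geq 0$. For the monotonicity in $n$, I would write
\begin{equation*}
G^*(n+1) - G^*(n) = \bigl[g^*(n+1) - g^*(n)\bigr] - \bigl[g^*(n) - g^*(n-1)\bigr] = g^*(n+1) - 2g^*(n) + g^*(n-1),
\end{equation*}
and then observe that the right-hand side is precisely the second-order difference of $g^*$, which is nonnegative by the convexity part of Theorem~\ref{theorem4}. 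Therefore $G^*(n+1) \geq G^*(n)$, as claimed.

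There is essentially no obstacle here beyond carefully citing Theorem~\ref{theorem4}; the entire content of Theorem~\ref{theorem5} is the elementary fact that a discrete function is increasing (resp.\ has an increasing first difference) if and only if its first difference is nonnegative (resp.\ its second difference is nonnegative). The only minor point worth flagging is that Theorem~\ref{theorem4} is proved by value iteration with the initialization $g_0 \equiv 0$, so the convexity/monotonicity holds for the particular representative of $g^*$ selected by the iteration; since $G^*$ is a difference, it is invariant under the additive-constant gauge freedom in $g^*$ (recall $g(0) = \zeta$ is arbitrary), and the conclusion for $G^*$ is independent of this choice. This makes the two-line deduction above both rigorous and unambiguous.
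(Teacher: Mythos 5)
Your proposal is correct and matches the paper exactly: the paper also obtains Theorem~\ref{theorem5} as an immediate corollary of Theorem~\ref{theorem4}, reading nonnegativity of $G^*(n)=g^*(n)-g^*(n-1)$ off the increasing property of $g^*$ and monotonicity of $G^*$ off the convexity (nonnegative second difference) of $g^*$. Your added remark about invariance under the additive-constant normalization of $g^*$ is a sensible extra precaution, but the substance of the argument is identical.
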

Note that $G(n)$ plays a fundamental role in (\ref{eq_diff3}) and
(\ref{eq_diff4}). Thus, the increasing property of $G^*(n)$ enables
us to establish the monotone structure of optimal policy $d^*$ as
follows.
\begin{theorem}\label{theorem6_monotoned}
The optimal total number of working servers is increasing in $n$. In
other words, we have $||d^*(n+1)||_1 \geq ||d^*(n)||_1$, $\forall n
\in \mathbb N$.
\end{theorem}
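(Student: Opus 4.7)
The plan is to leverage the explicit characterization of the optimal policy given by Theorem~\ref{theorem3} together with the monotonicity of the perturbation realization factor established in Theorem~\ref{theorem5}. I would first observe that Theorem~\ref{theorem3} yields a very clean closed form for the total load at state $n$. Since non-economic groups are turned off and economic groups are loaded as fully as the budget $n$ permits (recall the quasi bang-bang form (\ref{eq34})), the total number of working servers collapses to
\begin{equation*}
||d^{*}(n)||_1 \;=\; \min\!\bigl(S_n,\, n\bigr), \qquad S_n \;:=\; \sum_{k \in \mathbb{K}_n} M_k, \qquad \mathbb{K}_n \;=\; \{k : G^{*}(n) > c_k/\mu_k\},
\end{equation*}
where $\mathbb{K}_n$ is the economic set introduced in (\ref{eq_K}).

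I would then use Theorem~\ref{theorem5} to transport this formula from $n$ to $n+1$. Because $G^{*}$ is (weakly) increasing, any group whose threshold $c_k/\mu_k$ is strictly dominated by $G^{*}(n)$ is also strictly dominated by $G^{*}(n+1)$, which immediately gives the set inclusion $\mathbb{K}_n \subseteq \mathbb{K}_{n+1}$ and hence $S_n \leq S_{n+1}$. Once this set-monotonicity is in hand, the conclusion follows from a short case split on whether the budget is the binding constraint. If $S_n \leq n$, then $||d^{*}(n)||_1 = S_n$ while $||d^{*}(n+1)||_1 = \min(S_{n+1}, n+1)$; since $S_{n+1} \geq S_n$ and $n+1 > n \geq S_n$, both terms of the minimum are at least $S_n$. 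If instead $S_n > n$, integrality forces $S_n \geq n+1$, whence $S_{n+1} \geq n+1$ and $||d^{*}(n+1)||_1 = n+1 > n = ||d^{*}(n)||_1$.

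In terms of difficulty, I do not expect a genuine obstacle: the two structural facts required (the economic-set description and the monotonicity of $G^{*}$) are already established, and the remaining argument reduces to elementary arithmetic on the integers $S_n$ and $n$. The only piece that needs careful bookkeeping is the derivation of the $\min(S_n,n)$ formula itself from the case-based description in Theorem~\ref{theorem3}; this amounts to observing that the ``partial-load'' group $\hat{k}$ in (\ref{eq34}) contributes precisely the leftover budget $n - \sum_{l < \hat{k}} M_l$ when the constraint is tight, so the total equals $n$ in that regime and equals $\sum_{k \in \mathbb{K}_n} M_k$ otherwise. After verifying this bookkeeping, the monotonicity claim is essentially immediate.
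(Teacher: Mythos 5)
Your proposal is correct and follows essentially the same route as the paper's own proof: both express the total as $\|d^*(n)\|_1 = n \wedge \sum_{k \in \mathbb K^*_n} M_k$ via Theorem~\ref{theorem3}, deduce $\mathbb K^*_n \subseteq \mathbb K^*_{n+1}$ from the monotonicity of $G^*$ in Theorem~\ref{theorem5}, and compare the two minima. Your explicit case split on whether the budget $n$ binds merely spells out the final comparison that the paper leaves as a one-line observation.
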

\begin{proof}
Similar to (\ref{eq_K}), we define $\mathbb K^*_n$ as the set of
economic groups under the optimal policy $d^*$
\begin{equation}\label{eq_Ka}
\mathbb K^*_n := \left\{ k : G^*(n) > \frac{c_k}{\mu_k} \right\}.
\end{equation}
Note that $G^*(n+1) \geq G^*(n)$ from Theorem~\ref{theorem5} implies
that any $k \in \mathbb K^*_n$ also belongs to $\mathbb K^*_{n+1}$
or
\begin{equation}\label{eq44a}
\mathbb K^*_n \subseteq \mathbb K^*_{n+1}.
\end{equation}
Theorem~\ref{theorem3} indicates that the optimal total number of
working servers equals
\begin{equation}\label{eq45a}
||d^*(n)||_1 = \sum_{k \in \mathbb K^*_n} d^*(n,k) = \left\{
\begin{array}{ll}
\sum_{k \in \mathbb K^*_n} M_k, & \mbox{ if } n \geq \sum_{k \in \mathbb K^*_n} M_k;\\
n, & \mbox{ if } n < \sum_{k \in \mathbb K^*_n} M_k;\\
\end{array}
\right.
\end{equation}
which means
\begin{equation}\label{eq46a}
||d^*(n)||_1 = n \wedge \sum_{k \in \mathbb K^*_n} M_k.
\end{equation}
Therefore, for state $n+1$, we have
\begin{equation}\label{eq46b}
||d^*(n+1)||_1 = (n+1) \wedge \sum_{k \in \mathbb K^*_{n+1}} M_k.
\end{equation}
Utilizing (\ref{eq44a}) and comparing (\ref{eq46a}) and
(\ref{eq46b}), we directly obtain
\begin{equation}
||d^*(n+1)||_1 \geq ||d^*(n)||_1.
\end{equation}
This completes the proof.
\end{proof}

Theorem~\ref{theorem6_monotoned} rigorously confirms an intuitive
result that when the queue length increases, more servers should be
turned on to alleviate the system congestion, which is also the
essence of the congestion-based staffing policy \cite{Zhang09}.
However, it does not mean that the number of working servers in a
particular group is necessarily monotone increasing in $n$ (an
example is shown in Fig.~\ref{fig_ex1-b} in
Section~\ref{section_numerical}). More detailed discussion will be
given by Theorem~\ref{theorem8_monotoned} in the next section. Based
on Theorem~\ref{theorem6_monotoned}, we can further obtain the
following result.

\begin{corollary}\label{corollary1} For a state $\bar{n}$, if $d^*(\bar{n}, k) = M_k$, $\forall k$, then
$d^*(n, k) = M_k$, $\forall k$ and $n \geq \bar{n}$.
\end{corollary}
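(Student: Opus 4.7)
The plan is to derive this as an immediate corollary of Theorem~\ref{theorem6_monotoned} together with the trivial capacity bound $\|d^*(n)\|_1 \le \sum_{k=1}^K M_k$. The hypothesis $d^*(\bar n, k) = M_k$ for every $k$ is equivalent to $\|d^*(\bar n)\|_1 = \sum_{k=1}^K M_k$, i.e., at state $\bar n$ the optimal policy already operates the full fleet.

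First I would invoke the monotonicity result: by Theorem~\ref{theorem6_monotoned}, $\|d^*(n)\|_1$ is non-decreasing in $n$, so for every $n \ge \bar n$ we have $\|d^*(n)\|_1 \ge \|d^*(\bar n)\|_1 = \sum_{k=1}^K M_k$. Second, since $d^*(n,k) \le M_k$ by the action-space definition, the reverse inequality $\|d^*(n)\|_1 \le \sum_{k=1}^K M_k$ holds trivially. Combining the two forces equality $\|d^*(n)\|_1 = \sum_{k=1}^K M_k$, which in turn requires $d^*(n,k) = M_k$ for every $k$, as any strict inequality in a single coordinate would strictly reduce the sum.

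There is essentially no hard step: the nontrivial content has already been absorbed into Theorem~\ref{theorem6_monotoned} (whose proof rested on the monotonicity of $G^*(n)$ in Theorem~\ref{theorem5} and the expression $\|d^*(n)\|_1 = n \wedge \sum_{k \in \mathbb K^*_n} M_k$). One minor sanity check I would include is noting that the hypothesis implicitly forces $\bar n \ge \sum_k M_k$ through Proposition~\ref{pro1}, so $n \ge \bar n$ automatically guarantees $n \ge \sum_k M_k$ and the capacity constraint $\sum_k d^*(n,k) \le n$ in the ILP~(\ref{eq_ilp}) does not bind; this is the reason the argument cannot be blocked by the efficient-action constraint. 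The corollary then follows in one line.
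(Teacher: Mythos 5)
Your proof is correct and follows exactly the route the paper intends: the paper states Corollary~\ref{corollary1} as an immediate consequence of Theorem~\ref{theorem6_monotoned} without writing out a proof, and your argument (monotonicity of $\|d^*(n)\|_1$ plus the trivial bound $d^*(n,k)\le M_k$ forcing equality coordinatewise) is precisely that one-line deduction. The added remark that Proposition~\ref{pro1} forces $\bar n \ge \sum_k M_k$ is a harmless and sensible sanity check.
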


\noindent\textbf{Remark 2.} Corollary~\ref{corollary1} again
confirms an intuitive result that once the optimal action is turning
on all servers at certain state $\bar{n}$, then the same action is
optimal for all states larger than $\bar{n}$. Therefore, the search
for the optimal policy can be limited to the states $n < \bar{n}$
and the infinite state space is truncated without loss of
optimality. The difficulty of searching over the infinite state
space for the optimal policy can be avoided.

There exists a finite $\bar{n}$ (queue length) for which all servers
in all groups must be turned on. Such an existence of $\bar{n}$ can
be guaranteed by the linear operating cost and the increasing convex
holding cost function (Assumption 1), which can be verified by a
simple reasoning. Assume for any given scheduling policy, there is
no existence of such an $\bar{n}$, which means that there is at
least one idle server no matter how long the queue is (for all
states). With the increasing convex holding cost function, when the
queue length is long enough, the holding cost reduction due to the
work completion by turning on the idle server must exceed the
constant increase of server's operating cost. Then, turning on the
idle server at this state must reduce the system average cost.
Therefore, the assumed policy is never optimal. The suggested policy
of turning on the idle server must occur, which means the existence
of $\bar{n}$.

Based on Theorems~\ref{theorem1} and \ref{theorem3}, we can design a
procedure to find the optimal scheduling policy. First, compute the
value of $G(n)$ from Lemma 1 and determine the set $\mathbb K_n$ defined in
(\ref{eq_K}). If $n \geq \sum_{k \in \mathbb K_n} M_k$, turn on all
the servers in groups belonging to $\mathbb K_n$. If $n < \sum_{k
\in \mathbb K_n} M_k$, renumber the group indexes and set $d(n,k) =
M_k \wedge (n-\sum_{l=1}^{k-1}d(n,l))$, as stated in
Theorem~\ref{theorem3}. All the other servers should be off. This
process is repeated for $n=1,2,\cdots$, until the state $\bar{n}$
satisfying the condition in Corollary~\ref{corollary1} is reached. Set
$d(n,k)=M_k$ for all $k$ and $n \geq \bar{n}$ so that the whole
policy $d$ is determined. Then, we iterate the above procedure under
this new policy $d$. New improved policies will be repeatedly
generated until the policy cannot be improved and the procedure
stops. Based on this procedure, we develop the following
Algorithm~\ref{algo1} to find the optimal scheduling policy.

\begin{algorithm}[htbp]
  \caption{An iterative algorithm to find the optimal scheduling policy}\label{algo1}
  \begin{algorithmic}[1]

\State choose a proper initial policy $d^*$, e.g., $d^*(n,k)=M_k$,
$\forall n, k$, which indicates to turn on all servers;

\Repeat

\State set $d = d^*$, $d^* = \bm 0$, and $n=0$;

\State compute or estimate $\eta$ of the system under policy $d$;

\Repeat

    \State set $n = n+1$;

    \State compute $G(n)$ by using (\ref{eq_G-Recur}) recursively or by solving (\ref{eq_poisson}) and (\ref{eq_G});

    \State compute $\mathbb K_n$ using (\ref{eq_K});

    \If{$n \geq \sum_{k \in \mathbb K_n}M_k$}
        \State set $d^*(n,k) = M_k$, $\forall k \in \mathbb K_n$;
    \Else
        \State set $d^*(n,k) = M_k \wedge (n-\sum_{l=1}^{k-1}d^*(n,l))$, where $k \in \mathbb K_n$ and group indexes $k$'s are
renumbered according to ascending order of $c_k - \mu_k G(n)$, as
stated in Theorem~\ref{theorem3};

    \EndIf

\Until{$d^*(n,k) = M_k, \forall k$}

\State set $\bar{n} = n$;

\State set $d^*(n,k) = M_k$, $\forall n \geq \bar{n}, \forall k$;

\Until{$d = d^*$}

\Return  optimal $d^*$.

\end{algorithmic}
\end{algorithm}

From Algorithm~\ref{algo1}, we can see that this algorithm can
iteratively generate better policies. Such a manner is similar to
the policy iteration widely used in the traditional MDP theory. Note
that the index order of groups should be renumbered at every state
$n$, as stated in line 12 of Algorithm~\ref{algo1}. Since the server
groups are ranked by the index based on $c_k - \mu_k G^*(n)$, the
index sequence varies with state $n$. Moreover, although the total
number of working servers $||d^*(n)||_1$ is increasing in $n$,
$d^*(n,k)$ is not necessarily monotone increasing in $n$ for a
particular group $k$. This means that it is possible that for some
$n$ and $k$, we have $d^*(n,k) > d^*(n+1,k)$, as shown in
Fig.~\ref{fig_ex1-b} in Section~\ref{section_numerical}. These
complications may make it difficult to implement the optimal
scheduling policy in practical service systems with human servers,
as the servers have to be turned on or off without a regular
pattern. However, as we will discuss in the next section, the group
index sequence can remain unchanged if the ratio of cost rate to
service rate satisfies a reasonable condition. Then, we can develop
a simpler optimal scheduling policy obeying the $c/\mu$-rule, which
is much easier to implement in practice.

\section{The $c/{\mu}$-Rule}\label{section_rule}
We further study the optimal scheduling policy for the group-server
queue when the scale economies in terms of $c/{\mu}$ ratios exist.

\begin{assumption}\label{assumption2} (Scale Economies)
If the server groups are sorted in the order of $\frac{c_1}{\mu_1}
\leq \frac{c_2}{\mu_2} \leq \cdots \leq \frac{c_K}{\mu_K}$, then
their service rates satisfy $\mu_1 \geq \mu_2 \geq \cdots \geq
\mu_K$.
\end{assumption}

This assumption is reasonable in some practical situations as it
means that a faster server has a smaller operating cost rate per
unit of service rate. This can be explained by \emph{the effect of
the scale economies}. For example, in a data center, a faster
computer usually has a lower cost per unit of computing capacity.
With Assumption~\ref{assumption2}, we can verify that the group
index according to the ascending order of $c_k - \mu_k G^*(n)$
remains unvaried, no matter what the value of $G^*(n)$ is. The
ascending order of $c_k - \mu_k G^*(n)$ is always the same as the
ascending order of $c_k/\mu_k$. That is, the optimal policy
structure in Theorem~\ref{theorem3} can be characterized as follows.

\begin{theorem}\label{theorem7}
With Assumption~\ref{assumption2}, a policy $d^*$ is optimal if and
only if it satisfies the condition: If $G^*(n) > \frac{c_k}{\mu_k}$,
then $d^*(n,k) = M_k \wedge (n-\sum_{l=1}^{k-1}d^*(n,l))$;
otherwise, $d^*(n,k) = 0$, for $k=1,2,\cdots,K$, $n \in \mathbb N$.
\end{theorem}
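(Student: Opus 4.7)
The plan is to reduce Theorem~\ref{theorem7} directly to Theorem~\ref{theorem3} by showing that, under Assumption~\ref{assumption2}, the state-dependent reindexing in Theorem~\ref{theorem3} becomes trivial: for each $n$, the ascending order of $c_k-\mu_k G^*(n)$ agrees with the fixed ascending order of $c_k/\mu_k$ on every set of groups that actually matters for the optimal action at state $n$. Once this is established, the ``if $\ldots$ then $\ldots$ otherwise $\ldots$'' prescription of Theorem~\ref{theorem3}, read off in the $c/\mu$-sorted index order, is exactly the content of Theorem~\ref{theorem7}, so the ``if and only if'' follows with no further work.

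First, I would invoke Theorem~\ref{theorem5} to note $G^*(n)\ge 0$ and write the economic set as $\mathbb K^*_n=\{k:G^*(n)>c_k/\mu_k\}$ as in \eqref{eq_Ka}. Because Assumption~\ref{assumption2} already sorts the groups so that $c_1/\mu_1\le\cdots\le c_K/\mu_K$, the set $\mathbb K^*_n$ is automatically a prefix $\{1,2,\ldots,\hat k\}$ of the fixed index order for some $\hat k\in\{0,1,\ldots,K\}$. Hence the groups that Theorem~\ref{theorem3} turns on always form an initial segment in the $c/\mu$-ordering, independent of $n$.

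Second, I would verify that within this prefix the ordering by $c_k-\mu_k G^*(n)$ coincides with the fixed index order, by checking for any two consecutive groups $k,k+1$ with $k\in\mathbb K^*_n$ that
\begin{equation*}
(c_{k+1}-\mu_{k+1}G^*(n))-(c_k-\mu_k G^*(n))=(c_{k+1}-c_k)+(\mu_k-\mu_{k+1})G^*(n)\ge 0.
\end{equation*}
The term $(\mu_k-\mu_{k+1})G^*(n)$ is nonnegative by Assumption~\ref{assumption2} and $G^*(n)\ge 0$, so the inequality is immediate when $c_{k+1}\ge c_k$. The delicate case (and the only real obstacle) is $c_{k+1}<c_k$, which Assumption~\ref{assumption2} permits: here $\mu_k>\mu_{k+1}$ strictly (otherwise $c_k/\mu_k>c_{k+1}/\mu_{k+1}$), and cross-multiplying $c_k/\mu_k\le c_{k+1}/\mu_{k+1}$ yields
\begin{equation*}
\frac{c_k-c_{k+1}}{\mu_k-\mu_{k+1}}\le\frac{c_k}{\mu_k}.
\end{equation*}
Combining with $G^*(n)>c_k/\mu_k$ (since $k\in\mathbb K^*_n$) gives $(\mu_k-\mu_{k+1})G^*(n)>c_k-c_{k+1}$, which is the desired inequality.

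Finally, I would put the two steps together: since $\mathbb K^*_n$ is an initial segment $\{1,\ldots,\hat k\}$ in the fixed ordering, and since within that segment the $(c_k-\mu_k G^*(n))$-ascending order is the same as the original index order, the prescription of Theorem~\ref{theorem3} — assign $d^*(n,k)=M_k\wedge(n-\sum_{l=1}^{k-1}d^*(n,l))$ to groups in the economic set in ascending $(c_k-\mu_k G^*(n))$-order, and $0$ otherwise — is literally the prescription in Theorem~\ref{theorem7} under the fixed $c/\mu$-order. Necessity and sufficiency are then inherited from Theorem~\ref{theorem3}. The single nontrivial step is the ratio argument for the case $c_{k+1}<c_k$; everything else is bookkeeping on the ordering.
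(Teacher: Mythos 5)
Your proposal is correct and follows essentially the same route as the paper: the paper simply asserts that under Assumption~\ref{assumption2} the ascending order of $c_k-\mu_k G^*(n)$ coincides with the fixed ascending order of $c_k/\mu_k$, so that the state-dependent reindexing in Theorem~\ref{theorem3} becomes the identity and Theorem~\ref{theorem7} follows. You supply the verification the paper leaves implicit (the consecutive-pair inequality together with $G^*(n)\ge 0$ from Theorem~\ref{theorem5}), and your restriction of the order-agreement claim to the economic prefix $\mathbb K^*_n$ is in fact slightly more careful than the paper's blanket statement, since for non-economic groups the order can differ without affecting the prescribed action.
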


Theorem~\ref{theorem7} implies that the optimal policy $d^*$ follows
a simple rule called \emph{the $c/{\mu}$-rule}: \emph{Servers in the
group with smaller $c/{\mu}$ ratio should be turned on with higher
priority}. This rule is very easy to implement as the group index
renumbering for each state in Theorem~\ref{theorem3} is not needed
anymore. As mentioned earlier, the $c/{\mu}$-rule can be viewed as a
counterpart of the famous \emph{$c \mu$-rule} for the scheduling of
polling queues \cite{Smith56,VanMieghem95}, in which the queue with
greater $c \mu$ will be given higher priority to be served by the
single service facility.

Using the monotone increasing property of $G^*(n)$ in
Theorem~\ref{theorem5} and Assumption~\ref{assumption2}, we can
further characterize the monotone structure of the optimal policy
$d^*$ as follows.

\begin{theorem}\label{theorem8_monotoned}
The optimal scheduling action $d^*(n,k)$ is increasing in $n$,
$\forall k=1,2,\cdots,K$.
\end{theorem}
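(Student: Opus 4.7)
The plan is to exploit the $c/\mu$-rule structure from Theorem~\ref{theorem7} together with the aggregate monotonicity established in Theorem~\ref{theorem6_monotoned}. Under Assumption~\ref{assumption2} the ratios $c_k/\mu_k$ are sorted in nondecreasing order, so the economic set $\mathbb K^*_n = \{k : G^*(n) > c_k/\mu_k\}$ is always a prefix $\{1, 2, \ldots, k_n\}$ for some $k_n \in \{0,1,\ldots,K\}$; and Theorem~\ref{theorem5} gives $k_n \leq k_{n+1}$ because $G^*(\cdot)$ is nondecreasing.

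The key step is to rewrite the nested recursion in Theorem~\ref{theorem7} in the explicit closed form
\begin{equation}\label{eq_greedy_prop}
d^*(n,k) \;=\; \min\!\left( M_k,\; \bigl(T(n) - S_{k-1}\bigr)^+ \right), \qquad k=1,2,\ldots,K,
\end{equation}
where $T(n) := \|d^*(n)\|_1$ is the optimal total number of working servers at state $n$, $S_0 := 0$, and $S_{k-1} := \sum_{l=1}^{k-1} M_l$. A short induction on $k$ verifies (\ref{eq_greedy_prop}): for $k \leq k_n$, the recursion $d^*(n,k) = M_k \wedge (n - \sum_{l<k} d^*(n,l))$ is exactly greedy filling from group $1$ upward, which yields $d^*(n,k) = \min(M_k, (T(n) - S_{k-1})^+)$ with $T(n) = \min(n, S_{k_n})$; for $k > k_n$ one has $T(n) \leq S_{k_n} \leq S_{k-1}$, so the right-hand side of (\ref{eq_greedy_prop}) vanishes, matching $d^*(n,k) = 0$.

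Once (\ref{eq_greedy_prop}) is in hand, the theorem follows almost immediately. For each fixed $k$, the map $T \mapsto \min(M_k, (T - S_{k-1})^+)$ is nondecreasing in $T$ (it is the composition of a monotone truncation with a monotone affine shift). Since Theorem~\ref{theorem6_monotoned} guarantees $T(n+1) \geq T(n)$, evaluating (\ref{eq_greedy_prop}) at $n$ and $n+1$ yields $d^*(n+1,k) \geq d^*(n,k)$ for every $k$ and every $n \in \mathbb N$.

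The main obstacle here is not a single hard estimate but the careful bookkeeping behind (\ref{eq_greedy_prop}): one must verify that, under Assumption~\ref{assumption2}, the two constraints in Theorem~\ref{theorem7}, namely the budget constraint $\sum_{l \leq k} d^*(n,l) \leq n$ of Proposition~\ref{pro1} and the economic constraint $k \leq k_n$, compose cleanly into a single greedy-filling formula indexed by the scalar $T(n)$. This is the place where Assumption~\ref{assumption2} is essential: without the coincidence between the ascending order of $c_k/\mu_k$ and the ascending order of $c_k - \mu_k G^*(n)$, the rank of the groups could permute with $n$, a particular $d^*(n,k)$ could decrease even while $T(n)$ increases (as already noted after Theorem~\ref{theorem6_monotoned}), and the clean closed form (\ref{eq_greedy_prop}) would be lost.
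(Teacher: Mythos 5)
Your proof is correct, but it is organized differently from the paper's. The paper proves Theorem~\ref{theorem8_monotoned} by a direct componentwise comparison of $d^*(n)$ and $d^*(n+1)$: it invokes $G^*(n+1)\geq G^*(n)$ (Theorem~\ref{theorem5}) to conclude that every group switched on at $n$ is still switched on at $n+1$, writes both actions in the quasi-threshold form of (\ref{eq34}), and finishes with a case analysis on whether the partially filled group $\hat k$ is saturated; it does not use Theorem~\ref{theorem6_monotoned} at all (indeed, Remark~3 notes that Theorem~\ref{theorem8_monotoned} re-implies it). You instead establish the closed form $d^*(n,k)=\min\bigl(M_k,(T(n)-S_{k-1})^+\bigr)$, which shows that under Assumption~\ref{assumption2} the entire optimal action vector is a fixed, componentwise nondecreasing function of the single scalar $T(n)=\|d^*(n)\|_1$, and then you import $T(n+1)\geq T(n)$ from Theorem~\ref{theorem6_monotoned}. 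Both routes are sound and both ultimately trace back to the monotonicity of $G^*$ (your argument does so through the proof of Theorem~\ref{theorem6_monotoned}, which uses $\mathbb K^*_n\subseteq\mathbb K^*_{n+1}$). Your version buys a cleaner separation of concerns: Assumption~\ref{assumption2} is used exactly once, to fix the greedy filling order and make the closed form state-independent, and the delicate bookkeeping about how $\hat k$ moves with $n$ (which the paper's case analysis handles somewhat informally, e.g.\ when asserting $d^*(n+1)=(M_1,\ldots,M_{\hat k},1,0,\ldots,0)$ without checking whether group $\hat k+1$ has become economic) is replaced by the trivial monotonicity of $T\mapsto\min(M_k,(T-S_{k-1})^+)$. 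The paper's version buys independence from Theorem~\ref{theorem6_monotoned} and keeps the logical flow ``Theorem~\ref{theorem5} $\Rightarrow$ Theorem~\ref{theorem8_monotoned} $\Rightarrow$ Theorem~\ref{theorem6_monotoned} (under Assumption~\ref{assumption2})'' available.
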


\begin{proof}
First, it follows from Theorem~\ref{theorem5} that
\begin{equation}
G^*(n+1) \geq G^*(n), \qquad \forall n \in \mathbb N.
\end{equation}
From Theorem~\ref{theorem7}, we know that for any state $n$, if
$G^*(n) > \frac{c_k}{\mu_k}$, the optimal action is
\begin{equation}\label{eq44}
d^*(n,k) = M_k \wedge (n-\sum_{l=1}^{k-1}d^*(n,l)).
\end{equation}
Therefore, for state $n+1$, we have $G^*(n+1) \geq G^*(n) >
\frac{c_k}{\mu_k}$. Thus, the optimal action is
\begin{equation}\label{eq45}
d^*(n+1,k) = M_k \wedge (n+1 -\sum_{l=1}^{k-1}d^*(n+1,l)).
\end{equation}
Since $d^*(n)$ has a quasi threshold structure, there exists a
certain $\hat k$ defined in (\ref{eq_hatK}) such that $d^*(n)$ has
the following form
\begin{equation}
d^*(n) = (M_1,M_2,\cdots,M_{\hat k-1},M_{\hat k} \wedge
(n-\sum_{l=1}^{\hat k-1}M_l), 0,\cdots,0).
\end{equation}
Therefore, with (\ref{eq44}) and (\ref{eq45}) we have
\begin{equation}
d^*(n+1,l) = M_l = d^*(n,l), \qquad l=1,2,\cdots, \hat k-1.
\end{equation}
For group $\hat k$, we have
\begin{equation}\label{eq46}
d^*(n+1,\hat k) = M_{\hat k} \wedge (n+1 -\sum_{l=1}^{\hat k-1}M_l)
\ \geq \ d^*(n,\hat k) = M_{\hat k} \wedge (n -\sum_{l=1}^{\hat
k-1}M_l).
\end{equation}
More specifically, if $d^*(n,\hat k) < M_{\hat k}$, the inequality
in (\ref{eq46}) strictly holds and we have $d^*(n+1) =
(M_1,M_2,\cdots,$ $M_{\hat k-1},d^*(n,\hat k)+1, 0,\cdots,0)$. If
$d^*(n,\hat k) = M_{\hat k}$, the equality in (\ref{eq46}) holds and
we have $d^*(n+1) = (M_1,M_2,\cdots,M_{\hat k-1},M_{\hat k},
1,0,\cdots,0)$ for $\hat k < K$ or $d^*(n+1) =
(M_1,M_2,\cdots,M_{K-1},M_{K})$ for $\hat k=K$.

Therefore, we always have $d^*(n+1,k) \geq d^*(n,k)$, $\forall
k=1,2,\cdots,K$. This completes the proof.
\end{proof}

\noindent\textbf{Remark 3.} Theorem~\ref{theorem8_monotoned} implies
that $d^*(n)$ is increasing in $n$ in vector sense. That is
$d^*(n+1) \geq d^*(n)$ in vector comparison. Therefore, we certainly have
$||d^*(n+1)||_1 \geq ||d^*(n)||_1$, as indicated in
Theorem~\ref{theorem6_monotoned}.

Using the monotone property of $G^*(n)$ in Theorem~\ref{theorem5}
and the $c/{\mu}$-rule in Theorem~\ref{theorem7}, we can directly
obtain the multi-threshold policy as the optimal policy,
which can be viewed as a generalization of the two-threshold CBS
policy in our previous study \cite{Zhang09}.

\begin{theorem}\label{theorem9}
The optimal policy $d^*$ has a multi-threshold form with thresholds
$\theta_k$: If $n \geq \theta_{k}$, the maximum number of servers in
group $k$ should be turned on, $\forall n \in \mathbb N$,
$k=1,2,\cdots,K$.
\end{theorem}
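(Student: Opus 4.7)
The plan is to combine the $c/\mu$-rule from Theorem~\ref{theorem7}, the monotonicity of $G^*(n)$ from Theorem~\ref{theorem5}, and the component-wise monotonicity $d^*(n,k) \leq d^*(n+1,k)$ established in Theorem~\ref{theorem8_monotoned} to produce, for each group $k$, a sharp threshold $\theta_k$ below which the number of working servers in group $k$ is strictly less than $M_k$ and at or above which it equals $M_k$.

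Concretely, for each $k \in \{1,\ldots,K\}$ I would define
\begin{equation}
\theta_k := \min\{ n \in \mathbb{N} : d^*(n,k) = M_k \}.
\end{equation}
The first step is to argue that this minimum exists. Existence of a finite $\bar{n}$ at which $d^*(\bar{n},l) = M_l$ for every $l$ is already justified in Remark~2 (it follows from the increasing convex holding cost together with the linear operating cost). Hence the set in the display above is nonempty and $\theta_k \leq \bar{n} < \infty$.

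Next, by Theorem~\ref{theorem8_monotoned}, $d^*(n,k)$ is monotone increasing in $n$, so once it hits its maximum value $M_k$ at $n = \theta_k$, it must remain at $M_k$ for every $n \geq \theta_k$. This directly delivers the threshold statement: for all $n \geq \theta_k$, the maximum number of servers in group $k$ is turned on. To give $\theta_k$ a concrete form, I would note that under Assumption~\ref{assumption2} and Theorem~\ref{theorem7} the condition $d^*(n,k) = M_k$ is equivalent to the two conditions (i) $G^*(n) > c_k/\mu_k$, so that group $k$ is economic, and (ii) enough capacity remains after fully staffing the lower-$c/\mu$ groups, i.e.\ $n \geq \sum_{l=1}^{k} M_l$. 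Since $G^*(n)$ is increasing in $n$ (Theorem~\ref{theorem5}), condition (i) is itself a threshold condition; combined with the capacity requirement (ii) we obtain the closed-form description
\begin{equation}
\theta_k = \max\left\{ \min\{ n : G^*(n) > c_k/\mu_k \},\ \sum_{l=1}^{k} M_l \right\}.
\end{equation}
Moreover, because $c_k/\mu_k$ is non-decreasing in $k$ (Assumption~\ref{assumption2}) and $G^*$ is monotone, the thresholds themselves satisfy $\theta_1 \leq \theta_2 \leq \cdots \leq \theta_K$, which is consistent with the nested structure required by the $c/\mu$-rule.

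The only technical point that needs care is showing that the ``$\min$'' in the definition of $\theta_k$ is attained (equivalently, that $G^*(n)$ eventually exceeds any fixed $c_k/\mu_k$). This follows from the existence of $\bar n$ in Remark~2, since at $n=\bar n$ every group is at full capacity, which by Theorem~\ref{theorem7} forces $G^*(\bar n) > c_k/\mu_k$ for every $k$. Everything else is a direct read-off from the results already proved, so I do not anticipate a genuine obstacle; the proof is essentially a packaging of Theorems~\ref{theorem5},~\ref{theorem7}, and~\ref{theorem8_monotoned} into the multi-threshold form.
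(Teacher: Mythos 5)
Your argument is internally consistent, but it proves a different statement from the one the paper intends, because you have read ``the maximum number of servers in group $k$'' as $M_k$. The paper's note immediately following the theorem clarifies that it means the efficiency-constrained maximum $M_k \wedge \bigl(n-\sum_{l=1}^{k-1}d^*(n,l)\bigr)$, and correspondingly the paper's threshold is $\theta_k := \min\{n : G^*(n) > c_k/\mu_k\}$, the first state at which group $k$ becomes economic and hence begins to be used --- not the first state at which it is fully staffed. The paper's proof is then a two-line read-off: $G^*$ is increasing (Theorem~\ref{theorem5}), so $n\ge\theta_k$ if and only if $G^*(n)>c_k/\mu_k$, and Theorem~\ref{theorem7} converts this into ``staff group $k$ to the feasible maximum for $n\ge\theta_k$, keep it off for $n<\theta_k$,'' which is exactly the parameterization (\ref{eq_threshold_d}). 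Your $\theta_k=\min\{n: d^*(n,k)=M_k\}=\max\bigl\{\min\{n:G^*(n)>c_k/\mu_k\},\ \sum_{l=1}^{k}M_l\bigr\}$ is a genuinely different (larger) quantity whenever the constraint of Proposition~\ref{pro1} binds: in Example~3 the paper reports $\bm\theta^*=(1,4,8)$ with $\bm M=(3,4,3)$, whereas your definition would give $\theta_1=3$, and Theorem~\ref{theorem_th1} ($\theta^*_1=1$ always) would be false under your reading whenever $M_1>1$.

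The substantive gap this creates is that a ``multi-threshold form'' must allow the $K$ integers $\theta_1,\dots,\theta_K$ to reconstruct the entire policy, and your proof says nothing about $d^*(n,k)$ for $n<\theta_k$: under your definition it can be any value in $\{0,1,\dots,M_k-1\}$ and in particular need not be zero, so the policy is not determined by your thresholds. The fix is small: either adopt the paper's definition of $\theta_k$ and invoke only Theorems~\ref{theorem5} and~\ref{theorem7} (Theorem~\ref{theorem8_monotoned} and the existence of $\bar n$ from Remark~2 are then not needed), or keep your monotonicity route but set $\theta_k:=\min\{n: d^*(n,k)>0\}$, so that $d^*(n,k)=0$ below the threshold by minimality, and appeal to Theorem~\ref{theorem7} to pin the value above it to the feasible maximum. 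Your auxiliary observations --- nonemptiness of the defining set and $\theta_1\le\cdots\le\theta_K$ --- are correct and consistent with the paper's subsequent discussion.
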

\begin{proof}
We know that $G^*(n)$ increases with $n$ from
Theorem~\ref{theorem5}. For any particular group $k$, we can define
a threshold as
\begin{equation}
\theta_k := \min\left\{n: G^*(n)>\frac{c_k}{\mu_k}\right\}, \quad
k=1,2,\cdots,K.
\end{equation}
Therefore, for any $n \geq \theta_k$, $G^*(n)>\frac{c_k}{\mu_k}$ and
the servers in group $k$ should be turned on as many as possible,
according to the $c/\mu$-rule in Theorem~\ref{theorem7}. Thus, the
optimal scheduling for servers in group $k$ has a form of threshold
$\theta_k$ and the theorem is proved.
\end{proof}

Note that ``the maximum number of servers in group $k$ should be
turned on" in Theorem~\ref{theorem9} means that the optimal action
$d^*(n,k)$ should obey the constraint in Proposition~\ref{pro1},
i.e., $d^*(n,k) = M_k \wedge (n-\sum_{l=1}^{k-1}d^*(n,l))$.
Theorem~\ref{theorem9} implies that the policy of the original
problem (\ref{eq_prob}) can be represented by a $K$-dimensional
threshold vector $\bm \theta$ as below.
\begin{equation}
\bm \theta := (\theta_1,\theta_2,\cdots,\theta_K),
\end{equation}
where $\theta_k \in \mathbb N$. With the monotone property of
$G^*(n)$ and Theorem~\ref{theorem7}, we can directly derive that
$\theta_k$ is monotone in $k$, i.e.,
\begin{equation}
\theta_1 \leq \theta_2 \leq \cdots \leq \theta_K.
\end{equation}
As long as $\bm \theta$ is given, the associated policy $d$ can be
recursively determined as below.
\begin{equation}\label{eq_threshold_d}
\begin{array}{ll}
d(n,k) = M_k \wedge \Big(n - \sum_{l=1}^{k-1} d(n,l)\Big), & \mbox{if } n \geq \theta_k; \\
d(n,k) = 0, & \mbox{if } n < \theta_k; \\
\end{array}
\end{equation}
where $n \in \mathbb N$, $k=1,2,\cdots,K$.

We can further obtain the constant optimal
threshold for group 1.
\begin{theorem}\label{theorem_th1}
The optimal threshold of group 1 is always $\theta^*_1 = 1$, that is,
we should always utilize the most efficient server group whenever
any customer presents in the system.
\end{theorem}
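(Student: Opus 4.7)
The plan is to reduce the claim $\theta_1^{*}=1$ to the single strict inequality $G^{*}(1)>c_1/\mu_1$, and then to obtain that inequality by lower-bounding $\eta^{*}$ using the scale-economies assumption together with the steady-state throughput identity. By Theorem~\ref{theorem9} (equivalently, by Theorem~\ref{theorem7} applied at $n=1$), and because group $1$ has the smallest $c/\mu$ ratio under Assumption~\ref{assumption2} and $M_1\geq 1$, the threshold $\theta_1^{*}$ equals $1$ precisely when $G^{*}(1)>c_1/\mu_1$. So everything boils down to this one inequality.

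First I would evaluate $G^{*}(1)$ explicitly. At $n=0$ Proposition~\ref{pro1} forces $d^{*}(0)=\bm 0$, so $f(0,d^{*}(0))=h(0)$, and Lemma~1 yields
\begin{equation*}
G^{*}(1)=\frac{\eta^{*}-h(0)}{\lambda}.
\end{equation*}
Thus the claim is equivalent to the strict lower bound $\eta^{*}>h(0)+c_1\lambda/\mu_1$.

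I would establish this bound by splitting $\eta^{*}$ into its holding and operating parts under $d^{*}$. For the operating part, Assumption~\ref{assumption2} gives $c_k\geq (c_1/\mu_1)\mu_k$ for every $k$, so $\bm m\bm c\geq (c_1/\mu_1)\,\bm m\bm\mu$ for any efficient action $\bm m$; averaging against $\pi^{*}$ and invoking the steady-state throughput identity $\sum_n \pi^{*}(n)\,d^{*}(n)\bm\mu=\lambda$ (balance of mean arrival and departure rates in the birth--death chain) produces a contribution of at least $c_1\lambda/\mu_1$. For the holding part, the weak bound $\sum_n\pi^{*}(n)h(n)\geq h(0)$ is automatic, and I would upgrade it to a strict bound using Assumption~\ref{assumption1}: since $h$ is increasing with $h(n)\to\infty$, there exists $n_0$ with $h(n_0)>h(0)$, and positive recurrence under $d^{*}$ (Proposition~\ref{pro2}) together with the irreducibility of the birth--death chain gives $\pi^{*}(n_0)>0$. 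Adding the two contributions delivers $\eta^{*}>h(0)+c_1\lambda/\mu_1$, hence $G^{*}(1)>c_1/\mu_1$, and therefore $\theta_1^{*}=1$.

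The main obstacle I anticipate is not the algebra but securing strictness rather than a weak inequality; a tie $G^{*}(1)=c_1/\mu_1$ would leave $d^{*}(1,1)$ ambiguous and would not rule out $\theta_1^{*}\geq 2$. Strictness hinges on two facts that must be made explicit: the unboundedness of $h$, which supplies a state $n_0$ with $h(n_0)>h(0)$, and the positivity of $\pi^{*}$ on every state visited by the optimal chain, which follows from Proposition~\ref{pro2} applied to any candidate optimal $d^{*}$ (whose stability may itself be verified by comparison with the trivial policy that turns on one group-$1$ server whenever $n\geq 1$, which is stable because $\mu_1>\lambda$ is implicit in the feasibility of the problem).
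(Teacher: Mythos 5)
Your proof is correct, but it takes a genuinely different route from the paper. The paper argues by contradiction with a process-shift (coupling) idea: if $\theta^*_1>1$, the states $\{0,\dots,\theta^*_1-2\}$ are transient, and the uniformly lowered threshold vector $\tilde{\bm\theta}=\bm\theta^*-(\theta^*_1-1)\bm 1^T$ generates a process whose statistics coincide with those of the original process shifted down by $\theta^*_1-1$; since $h$ is increasing this shift cannot increase the average cost, contradicting optimality. You instead work analytically: you reduce the claim to the single strict inequality $G^*(1)>c_1/\mu_1$ via the threshold definition in Theorem~\ref{theorem9}, evaluate $G^*(1)=(\eta^*-h(0))/\lambda$ from Lemma~1 and Proposition~\ref{pro1}, and then lower-bound $\eta^*$ by splitting it into the operating part (bounded below by $c_1\lambda/\mu_1$ using $c_k/\mu_k\ge c_1/\mu_1$ and the throughput identity $\sum_n\pi^*(n)\,d^*(n)\bm\mu=\lambda$) and the holding part (strictly above $h(0)$ because $h\to\infty$ and the recurrent class is infinite). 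Your route buys an explicit quantitative criterion and handles strictness cleanly --- the paper's shift argument only delivers $\eta^{\bm\theta^*}\ge\eta^{\tilde{\bm\theta}}$ --- at the price of invoking the steady-state balance identity; the paper's route is shorter and avoids any computation with $G^*$. One inessential slip in your closing remark: $\mu_1>\lambda$ is \emph{not} implicit in feasibility (only $\sum_k M_k\mu_k>\lambda$ is), so the single-group-1-server comparison policy need not be stable; but you do not need it, since any optimal policy must already be stable (an unstable policy has infinite average cost because $h(n)\to\infty$), which is all your argument requires.
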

\begin{proof} We use contradiction argument to prove this theorem. Assume
that the optimal threshold policy is $\bm \theta^*$ with $\theta^*_1
> 1$. We denote by $\bm X^{\bm \theta^*}=\{X^{\bm \theta^*}(t), t \geq
0\}$ the stochastic process of the queueing system under this policy
$\bm \theta^*$, where $X^{\bm \theta^*}(t)$ is the system state at
time $t$. We construct another threshold policy as $\tilde{\bm
\theta} = \bm \theta^* - (\theta^*_1 - 1)\bm 1^T$, where $\bm 1$ is
a $K$-dimensional column vector with 1's. Therefore, we know
$\tilde{\theta_1} = 1$. We denote by $\bm X^{\tilde{\bm \theta}}$
the stochastic process of the queueing system under policy
$\tilde{\bm \theta}$.

As $\theta^*_1 > 1$, we know that any state in the set
$\{0,1,\cdots,\theta^*_1-2\}$ is a \emph{transient state} of $\bm
X^{\bm \theta^*}$. Since transient states have no contribution to
the long-run average cost $\eta$, the statistics of $\bm
X^{\tilde{\bm \theta}}$ is equivalent to those of $\{X^{\bm
\theta^*}(t) - (\theta^*_1 - 1)\}$ if we omit the transient states.
Since the holding cost $h(n)$ is an increasing convex function in
$n$, it is easy to verify that $\eta^{\bm \theta^*} \geq
\eta^{\tilde{\bm \theta}}$. That is, if we simultaneously decrease
the thresholds of $\bm \theta^*$ to $\tilde{\bm \theta}$, the system
average cost will be decreased. Therefore, the assumption is not
true and $\theta^*_1 = 1$ is proved.
\end{proof}

Theorem~\ref{theorem9} indicates that the optimization problem
(\ref{eq_prob}) over an infinite state space is converted to the
problem of finding the optimal thresholds $\theta^*_k$, where
$k=1,2,\cdots,K$. Denoting by $\mathbb N^{K}_{\uparrow}$ a
$K$-dimensional positive integer space with its elements satisfying
$\theta_1 \leq \theta_2 \leq \cdots \leq \theta_K$, the original
problem (\ref{eq_prob}) can be rewritten as
\begin{equation}\label{eq_prob2}
\bm \theta^* = \argmin\limits_{\bm \theta \in \mathbb
N^{K}_{\uparrow}}\{\eta^{\bm \theta}\}.
\end{equation}
Therefore, the state-action mapping policy ($\mathbb N \rightarrow
\mathbb M$) is replaced by a parameterized policy with thresholds
$\bm \theta$. The original policy space is reduced from an infinite
dimensional space $\mathcal D$ to a $K$-dimensional integer space
$\mathbb N^{K}_{\uparrow}$. The \emph{curse of dimensionality} of
action space $\mathbb M$ and the optimal policy search over an
infinite state space $\mathbb N$ can be avoided by focusing on the
multi-threshold policies. To illustrate the procedure of policy
space reduction, we give an example of a 2-group server queue
illustrated in Fig.~\ref{fig_Policyredc}. We observe that the policy
space is significantly reduced after applying
Theorems~\ref{theorem3}, \ref{theorem7}, and \ref{theorem9}, which
identify the optimality structures. Since $\theta^*_1 = 1$ according
to Theorem~\ref{theorem_th1}, we only need to search for $K-1$
optimal thresholds. Sometimes we also treat $\theta^*_1$ as a
variable in order to maintain a unified presentation.

\begin{figure}[htbp]
\center
\includegraphics[width=1\columnwidth]{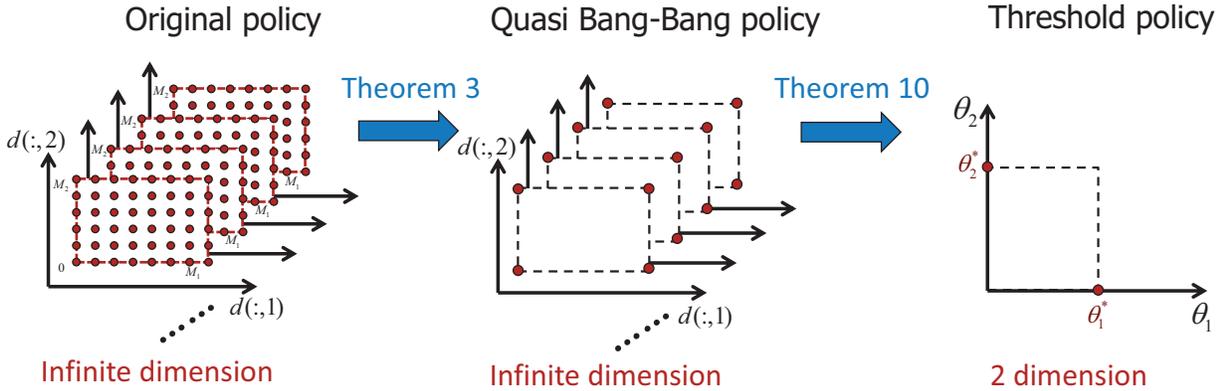}
\caption{Illustration of policy space reduction with an example of
2-group server queue.}\label{fig_Policyredc}
\end{figure}

By utilizing the $c/{\mu}$-rule and the optimality of
multi-threshold policy, we can further simplify
Algorithm~\ref{algo1} to find the optimal threshold policy $\bm
\theta^*$, which is described as Algorithm~\ref{algo2}. We can see
that Algorithm~\ref{algo2} iteratively updates the threshold policy
$\bm \theta$. Consider two threshold policies $\bm \theta$ and $\bm
\theta'$ generated from two successive iterations, respectively, by
using Algorithm~\ref{algo2}. $d'(n,k)$ and $d(n,k)$ are the
associated scheduling action determined by (\ref{eq_threshold_d})
based on $\bm \theta'$ and $\bm \theta$, respectively. From
(\ref{eq_threshold_d}), we see that $d'(n,k)$ and $d(n,k)$ have the
following relation.
\begin{equation}\label{eq_threshold_dd'}
\begin{array}{ll}
d'(n,k) = d(n,k) = M_k \wedge \Big(n - \sum_{l=1}^{k-1} d(n,l)\Big), & \mbox{if } n \geq \theta_k' \vee \theta_k; \\
d'(n,k) = d(n,k) = 0, & \mbox{if } n < \theta_k' \wedge \theta_k; \\
d'(n,k) = 0, d(n,k) = M_k \wedge \Big(n - \sum_{l=1}^{k-1} d(n,l)\Big), & \mbox{if } (\theta_k' \wedge \theta_k \leq n < \theta_k' \wedge \theta_k) \ \& \ (\theta_k'>\theta_k); \\
d(n,k) = 0, d'(n,k) = M_k \wedge \Big(n - \sum_{l=1}^{k-1} d'(n,l)\Big), & \mbox{if } (\theta_k' \wedge \theta_k \leq n < \theta_k' \wedge \theta_k) \ \& \ (\theta_k'<\theta_k). \\
\end{array}
\end{equation}
Substituting (\ref{eq_threshold_dd'}) into (\ref{eq_diff4}), we can
derive the following performance difference formula that quantifies
the effect of the change of threshold policy from $\bm \theta$ to
$\bm \theta'$, where $\bm \theta, \bm \theta' \in \mathbb
N^{K}_{\uparrow}$.

\begin{center}
\begin{boxedminipage}{1\columnwidth}
\begin{equation}\label{eq_diff5}
\eta' - \eta = \sum_{k=1}^{K}\sum_{n=\theta_k' \wedge
\theta_k}^{(\theta_k' \vee \theta_k)-1}\pi'(n)\left(d'(n,k) -
d(n,k)\right) \left(c_k - \mu_k G(n)\right),
\end{equation}
\vspace{-13pt}
\end{boxedminipage}
\end{center}
where $d'(n,k)$ and $d(n,k)$ are determined by
(\ref{eq_threshold_dd'}). From line 8-11 in Algorithm~\ref{algo2},
we observe that once $G(n)$ is larger than $\frac{c_k}{\mu_k}$, we
should set $\theta_k^* = n$ and turn on as many servers as possible
in group $k$. Groups with smaller $\frac{c_k}{\mu_k}$ will be turned
on with higher priority, which is the $c/\mu$-rule stated in
Theorem~\ref{theorem7}. With performance difference formula
(\ref{eq_diff5}), we see that the long-run average cost of the
system will be reduced after each policy update in
Algorithm~\ref{algo2}. When the algorithm stops, it means that the
system average cost cannot be reduced anymore and the optimal
threshold $\bm \theta^*$ is obtained. This procedure is also similar
to the policy iteration in the traditional MDP theory.

\begin{algorithm}[htbp]
  \caption{A $c/{\mu}$-rule based algorithm to find the optimal multi-threshold policy.}\label{algo2}
  \begin{algorithmic}[1]

\State renumber the groups index in an ascending order of their
value $\frac{c_k}{\mu_k}$, i.e., we have $\frac{c_1}{\mu_1} <
\frac{c_2}{\mu_2} < \cdots < \frac{c_K}{\mu_K}$;

\State choose the initial threshold as $\bm
\theta^*=(0,0,\cdots,0)_K$, which indicates to always turn on all
servers;

\Repeat

\State set $\bm \theta = \bm \theta^*$, $n=1$, and $k=1$;

\State compute or estimate $\eta$ of the system under threshold
policy $\bm \theta$;

\While{$k \leq K$ }

    \State compute $G(n)$ by using (\ref{eq_G-Recur}) recursively or by solving (\ref{eq_poisson}) and (\ref{eq_G});

    \While{($G(n) > \frac{c_k}{\mu_k}$) $\&$ ($k \leq K$) }

        \State set $\theta^*_k = n$;

        \State set $k = k+1$;

    \EndWhile

    \State set $n = n+1$;

\EndWhile

\Until{$\bm \theta = \bm \theta^*$}

\Return optimal  $\bm \theta^*$.

\end{algorithmic}
\end{algorithm}

Comparing Algorithms~\ref{algo1} and \ref{algo2}, we observe that
the essence of these two algorithms is similar: computing $G(n)$ and
updating policies iteratively. However, Algorithm~\ref{algo2} is
much simpler as it utilizes the $c/\mu$-rule based multi-threshold
policy. The $c/\mu$-rule, as an optimal policy, is very
easy to implement in practice. After the value of $G(n)$ is
obtained, we compare it with the groups' $\frac{c_k}{\mu_k}$ values.
If $\frac{c_k}{\mu_k}$ is smaller, we should turn on as many servers
as possible in group $k$; otherwise, turn off all servers in group
$k$. Such a procedure will induce a multi-threshold type policy, as
stated in Theorem~\ref{theorem9}.

More intuitively, we graphically demonstrate the above procedure by
using an example in Fig.~\ref{fig_CMuMonotone}. The vertical axis
represents the $c/{\mu}$ value of server groups, which are sorted in
an ascending order. When $n$ increases and the system becomes more
congested, we compute the value of the associated $G(n)$'s. As long
as $G(n)$ is larger than $\frac{c_k}{\mu_k}$, we should turn on as
many servers as possible for group $k$ and the associated $n$ is set
as the threshold $\theta_k$. For the case of $n=6$, group 2 still
has 1 server off although its $c/{\mu}$ is smaller than $G(n)$. It
is because of Proposition~\ref{pro1} that the total number of
working servers should not exceed $n$. Therefore, we can see that
the $c/\mu$-rule will prescribe to turn on group servers
from-bottom-up, as illustrated in Fig.~\ref{fig_CMuMonotone}. This
example demonstrates the monotone structure of the $c/{\mu}$-rule
and the optimal threshold policy.

Although Assumption 2 is reasonable for systems with non-human
servers such as computers with different performance efficiencies
(faster computers have smaller operating cost of processing each
job), the scale economies may not exist in systems with human
servers such as call centers where a faster server may incur much
higher operating cost. Thus, it is necessary to investigate the
robustness of the $c/\mu$-rule when Assumption 2 is not satisfied.
This is done numerically by Example~6 in the next section.

\begin{figure}[htbp]
\center
\includegraphics[width=1\columnwidth]{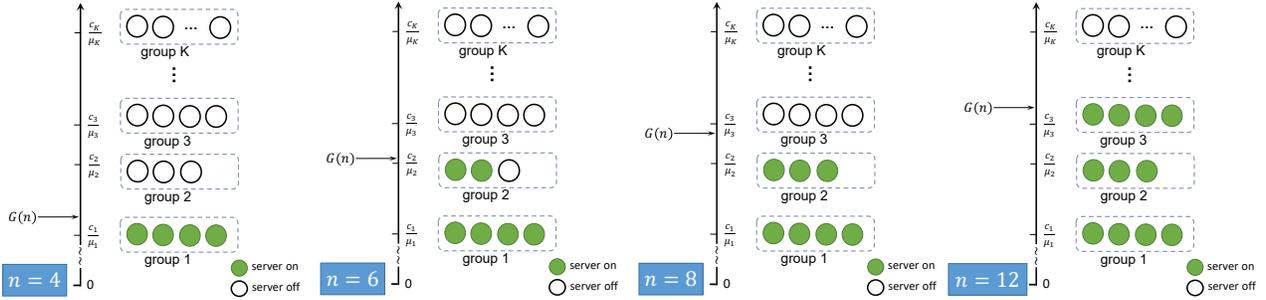}
\caption{An example of $c/{\mu}$-rule to determine servers' on-off,
where groups are sorted in ascending order of their $c/{\mu}$
values.}\label{fig_CMuMonotone}
\end{figure}

\section{Numerical Experiments}\label{section_numerical}
In this section, we conduct numerical experiments to verify the
analytical results and gain useful insights about optimal policies.

\subsection{Example 1: A general index policy case}
First, we consider a system with 3 groups of servers. System
parameters are as follows.

\begin{itemize}
\item Holding cost rate function: $h(n)=n$;
\item Arrival rate: $\lambda = 10$;
\item Number of groups: $K = 3$;
\item Number of servers in groups: $\bm M=(M_1,M_2,M_3)=(3,4,3)$;
\item Service rates of groups: $\bm \mu = (6,4,2)$;
\item Operating cost rates of groups: $\bm c = (7,4,3)$.
\end{itemize}

Note that Assumption~\ref{assumption1} is satisfied since the
holding cost rate function is $h(n)=n$ which is a linear function.
However, Assumption~\ref{assumption2} is not satisfied in this
example as the descending order of $\bm \mu$ is different from the
ascending order of ${c}/{\mu}$. Thus, the $c/{\mu}$-rule does not
apply to this example. We use Algorithm~\ref{algo1} to find the
optimal scheduling policy $d^*$ with the minimal average cost of
$\eta^*=12.5706$. The average queue length $L$ (including customers
in service) at each iteration is also illustrated along with the
long-run average cost $\eta$ in Fig.~\ref{fig_ex1-a-eta}. Since the
holding cost function is $h(n) = n$, the long-run average holding
cost is the same as $L$. Thus, the difference between $\eta$ and $L$
curves is the average operating cost. Note that $L$ significantly
increases at the second iteration, which corresponds to a scenario
with fewer servers working and more customers waiting. As shown in
Fig.~\ref{fig_ex1-a-eta}, the optimal solution is obtained after 4
iterations. We also plot the convex performance potential $g^*$ and
the increasing PRF $G^*$ under the optimal policy $d^*$ in
Fig.~\ref{fig_ex1-a-gG}, as predicted in Theorems 4 and 5.

\begin{figure}[htbp]
\centering \subfigure[Average cost and queue length during
iterations.]
{\includegraphics[width=0.49\columnwidth]{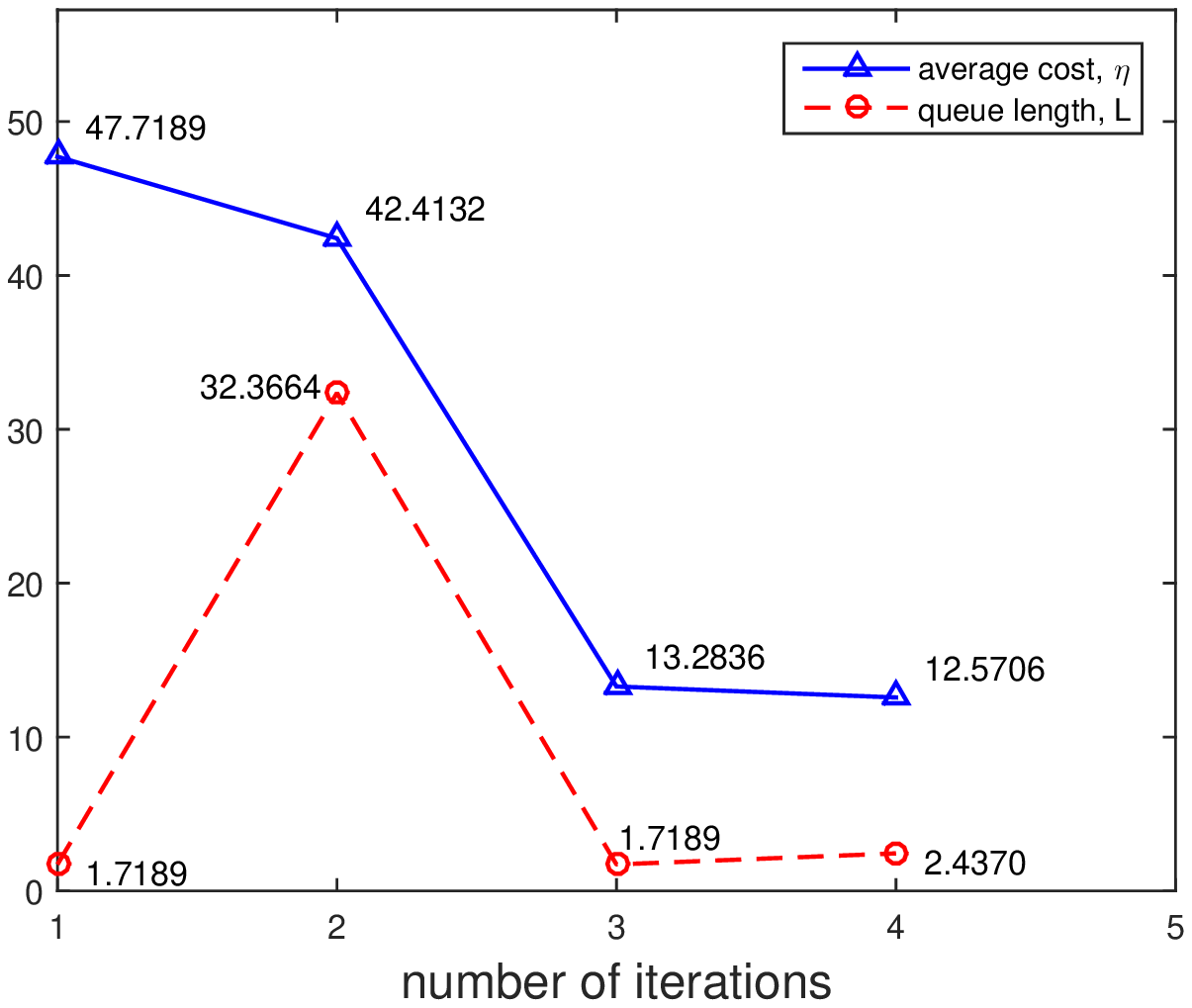}\label{fig_ex1-a-eta}}
\subfigure[Curves of $g^*$ and $G^*$.]
{\includegraphics[width=0.49\columnwidth]{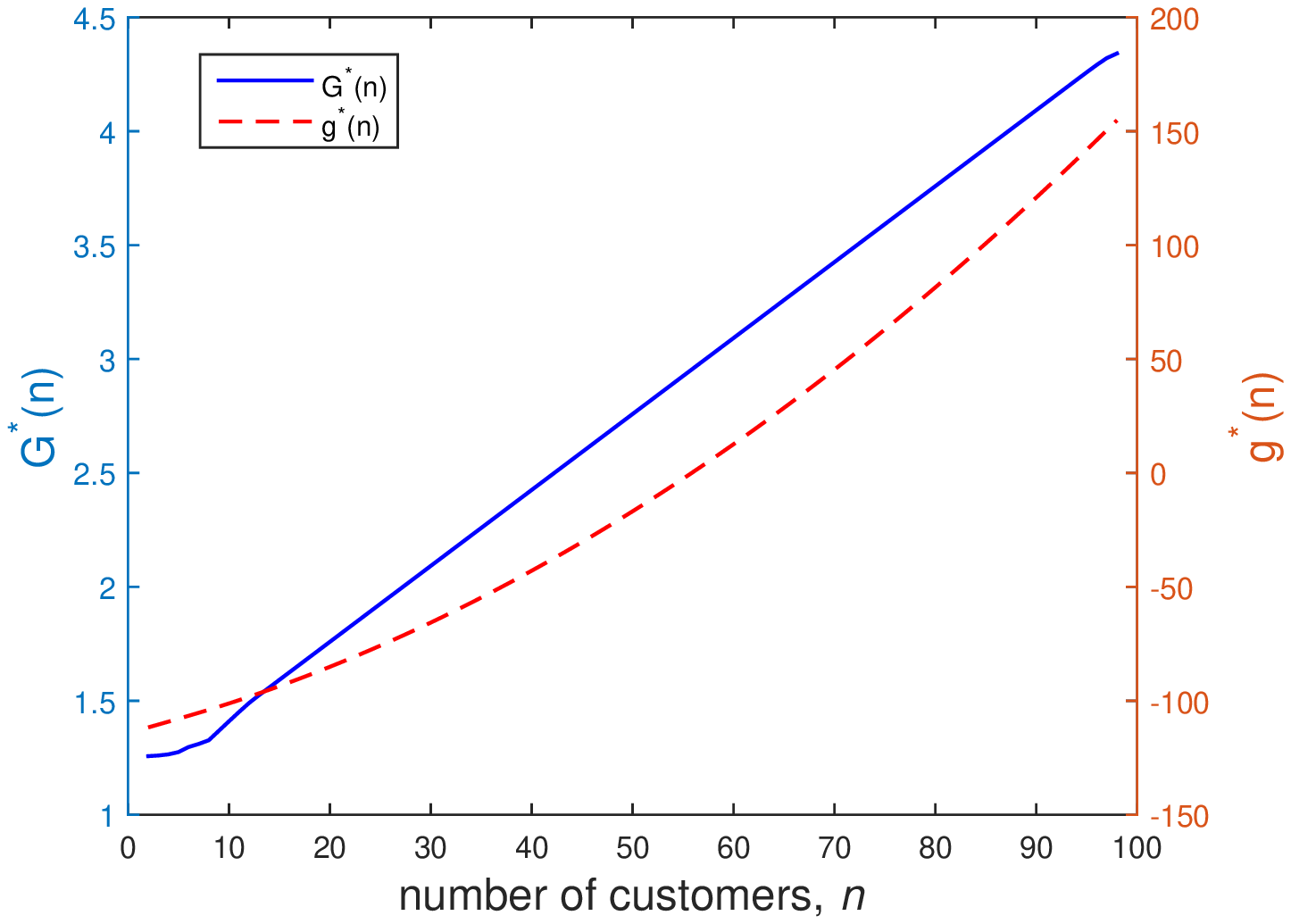}\label{fig_ex1-a-gG}}
\caption{Optimization procedure and curves of $g^*$ and $G^*$,
$\lambda=10$, $K=3$, $\bm M=(3,4,3)$, $\bm \mu=(6,4,2)$, $\bm
c=(7,4,3)$, $\eta^*=12.5706$.}\label{fig_ex1a}
\end{figure}

The optimal scheduling policy is shown in Fig.~\ref{fig_ex1-a} for
the queue length up to 30  as the optimal actions for $n>30$ remain
unchanged, as stated in Corollary~\ref{corollary1} and Remark~2. In
fact, the optimal action becomes $d^*(n)=(3,4,3)$, for any $n \geq
12$. The stair-wise increase in number of working servers for the
short queue length range as shown in Fig.~\ref{fig_ex1-a} reflects
the fact that the optimal action should satisfy $d^*(n) \bm 1 \leq
n$, as stated in Proposition~\ref{pro1}. Note that
Fig.~\ref{fig_ex1-a} demonstrates that the optimal policy $d^*$
obeys the form of quasi bang-bang control defined in
Theorem~\ref{theorem3} and the number of total working servers
$||d^*(n)||_1$ is increasing in $n$, as stated in
Theorem~\ref{theorem6_monotoned}.

However, the monotone property of $d^*(n,k)$ in
Theorem~\ref{theorem8_monotoned} does not hold since
Assumption~\ref{assumption2} is not satisfied in this example. To
demonstrate this point, we change the cost rate vector to $\bm c =
(7,4,1.8)$ and keep other parameters the same as above. Using
Algorithm~\ref{algo1}, we obtain the optimal policy as illustrated
in Fig.~\ref{fig_ex1-b} after 5 iterations. We have $d^*(n)=(0,4,1)$
when $n=5$, while $d^*(n)=(2,4,0)$ when $n=6$. Therefore, we can see
that the optimal policy of group 3, $d^*(n,3)$, is not always
increasing in $n$. However, $||d^*(n)||_1$ is still increasing in
$n$ which is consistent with Theorem~\ref{theorem8_monotoned}.

\begin{figure}[htbp]
\centering \subfigure[Optimal policy with $\lambda=10$, $K=3$, $\bm
M=(3,4,3)$, $\bm \mu=(6,4,2)$, $\bm c=(7,4,3)$, $\eta^*=12.5706$.]
{\includegraphics[width=0.49\columnwidth]{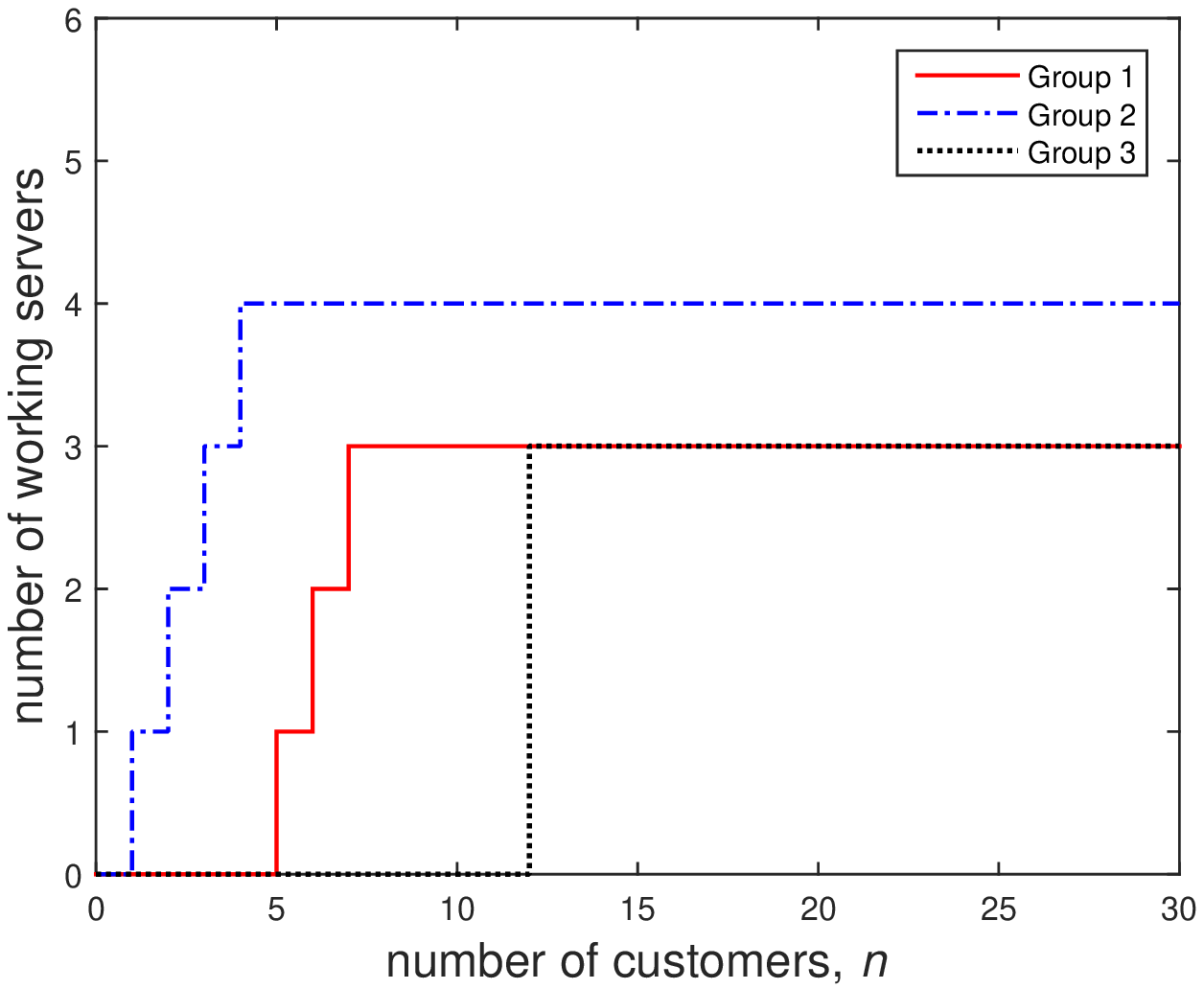}\label{fig_ex1-a}}
\subfigure[Optimal policy with $\lambda=10$, $K=3$, $\bm M=(3,4,3)$,
$\bm \mu=(6,4,2)$, $\bm c=(7,4,1.8)$, $\eta^*=12.5659$.]
{\includegraphics[width=0.49\columnwidth]{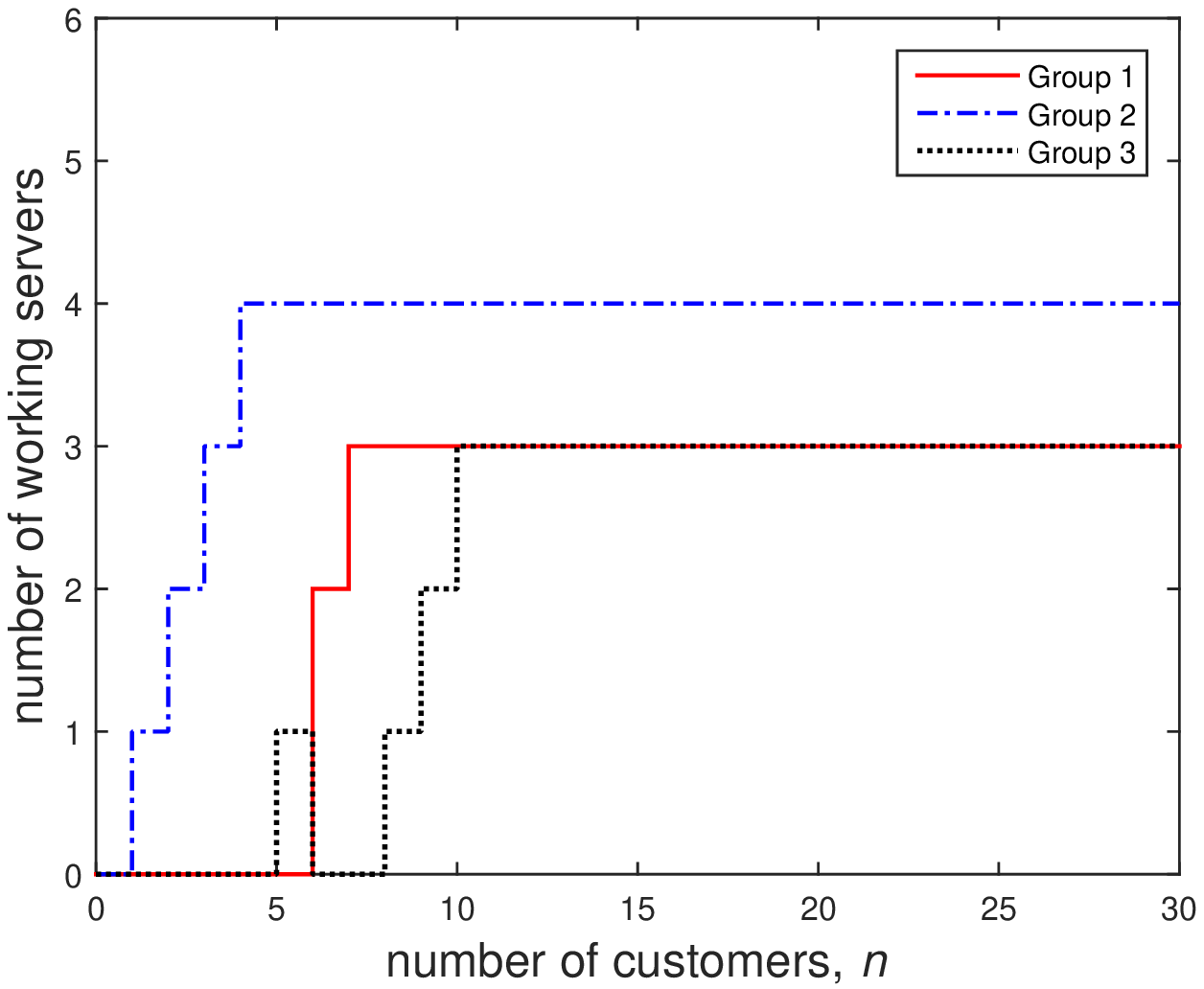}\label{fig_ex1-b}}
\caption{Optimal scheduling policies under different parameter
settings.}\label{fig_ex1b}
\end{figure}

\subsection{Example 2: A $c/{\mu}$-rule case}
We consider a system with the same set of parameters as that in the
previous example except for the operating cost rates. Now we assume
\begin{itemize}
\item Operating cost rates of groups: $\bm c = (7,8,5)$.
\end{itemize}

With these new cost rates, the descending order of $\bm \mu$ is the
same as the ascending order of $c/{\mu}$ of these groups, i.e., we
have $\mu_1>\mu_2>\mu_3$ and
$\frac{c_1}{\mu_1}<\frac{c_2}{\mu_2}<\frac{c_3}{\mu_3}$. Therefore,
Assumption 2 is satisfied and the $c/{\mu}$-rule applies to this
example. Thus, the optimal policy is a threshold vector $\bm \theta
= (\theta_1, \theta_2, \theta_3)$, as indicated by
(\ref{eq_threshold_d}). We use Algorithm~\ref{algo2} to find the
optimal threshold policy $\bm \theta^*$. From
Fig.~\ref{fig_ex2-eta}, we can see that after 5 iterations the
optimal threshold policy is found to be $\bm \theta^* = (1,9,21)$
with $\eta^* = 13.6965$. Comparing Examples~1 and 2, we note that
both algorithms take around 4 or 5 iterations to converge, at a
similar convergence speed. Algorithm~\ref{algo2} uses a threshold
policy which only has 3 variables to be determined. However, the
policy in Algorithm~\ref{algo1} is much more complex. Moreover, the
$c/{\mu}$-rule significantly simplifies the search procedure in
Algorithm~\ref{algo2}. Fig.~\ref{fig_ex2-gG} illustrates the curves
of $g^*(n)$ and $G^*(n)$, which are also consistent with the
structures stated in Theorems~\ref{theorem4} and \ref{theorem5}.

\begin{figure}[htbp]
\centering \subfigure[Average cost and queue length during
iterations.]
{\includegraphics[width=0.49\columnwidth]{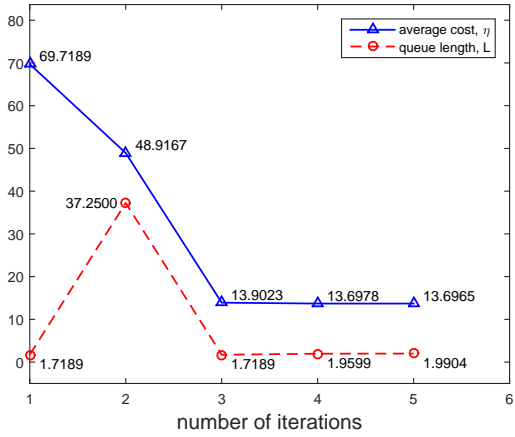}\label{fig_ex2-eta}}
\subfigure[Curves of $g^*$ and $G^*$.]
{\includegraphics[width=0.49\columnwidth]{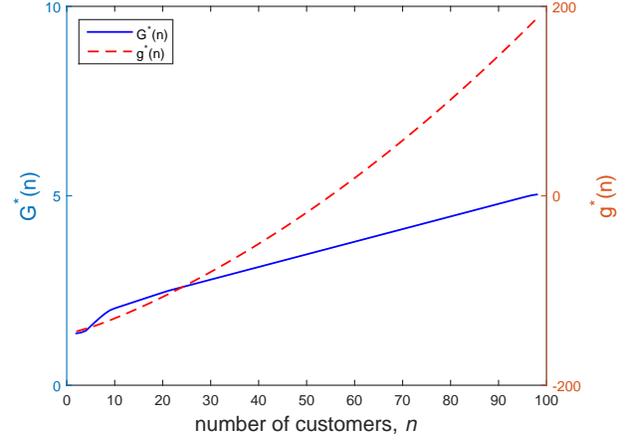}\label{fig_ex2-gG}}
\caption{Optimization procedure and curves of $g^*$ and $G^*$,
$\lambda=10$, $K=3$, $\bm M=(3,4,3)$, $\bm \mu=(6,4,2)$, $\bm
c=(7,8,5)$, $\eta^*=13.6965$.}\label{fig_ex2}
\end{figure}

\subsection{Example 3: Effect of traffic intensity}
We study the effect of traffic intensity on the optimal policy by
varying the arrival rate $\lambda$ in Example~2. Since the maximal
total service rate is $\sum_{k=1}^{K} M_k \mu_k = 40$, we examine
$0<\lambda < 40$ range for the system stability. With $\lambda = [2\
5\ 10\ 20\ 30\ 38\ 39]$, the optimal average cost $\eta^*$ and
optimal thresholds are illustrated in Fig.~\ref{fig_ex3-eta} and
Fig.~\ref{fig_ex3-theta}, respectively. As the traffic intensity
increases (the traffic becomes heavier), i.e., $\lambda \rightarrow
40$, the average cost $\eta^*$ will increase rapidly and the optimal
threshold policy $\bm \theta^*$ converges to $(1,4,8)$, which means
that servers are turned on as early as possible.

Note that the optimal threshold of the first group (its service rate
is the largest) is always $\theta^*_1 = 1$ due to the zero setup
cost, which is consistent with Theorem~\ref{theorem_th1}. It is
expected that the optimal threshold $\theta^*_1$ could be other
values if non-zero setup cost is considered.

\begin{figure}[htbp]
\centering \subfigure[Optimal average cost.]
{\includegraphics[width=0.49\columnwidth]{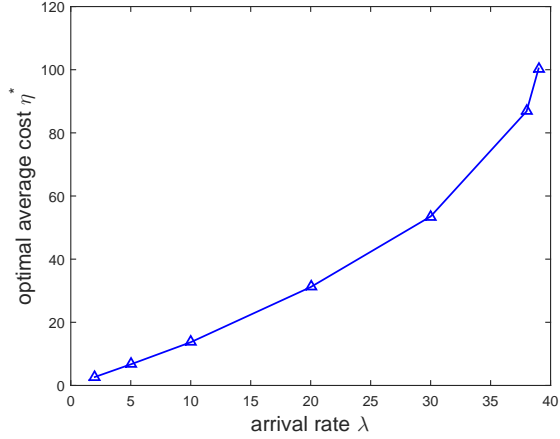}\label{fig_ex3-eta}}
\subfigure[Optimal thresholds.]
{\includegraphics[width=0.49\columnwidth]{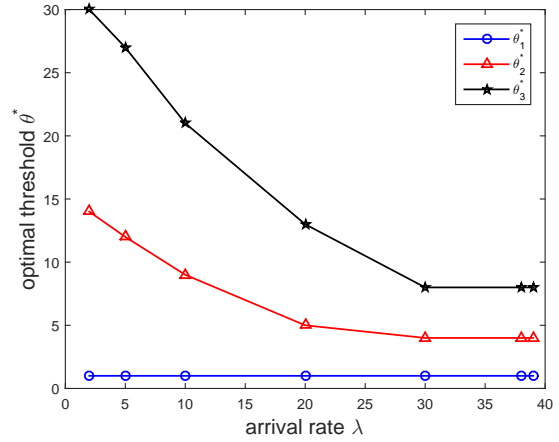}\label{fig_ex3-theta}}
\caption{Optimization results under different workloads with
$\lambda = [2\ 5\ 10\ 20\ 30\ 38\ 39]$, $K=3$, $\bm M=(3,4,3)$, $\bm
\mu=(6,4,2)$, $\bm c=(7,8,5)$.}\label{fig_ex3}
\end{figure}

\subsection{Example 4: Trade-off of costs}
For a system with the $c$/$\mu$-rule, the optimal threshold policy
depends on the dominance between the holding cost and the operating
cost. To study this effect on the optimal policy, we introduce an
operating cost weight parameter $v$. The value of $v$ reflects the
balance between the server provider's operating cost and the
customer's waiting cost. In practice, $v$ depends on the system's
optimization objective. The cost rate function (\ref{eq_f}) is
modified as below.
\begin{equation}\label{eq_fv}
f(n,\bm m) = h(n) + v \cdot \bm m \bm c.
\end{equation}
Other parameters are the same as those in Example~2. Using
Algorithm~2 and the set of operating cost weights $v = [0.1\ 0.3\
0.5\ 1\ 2\ 3]$, we obtain the minimal average costs and the
corresponding optimal threshold policies as shown in
Fig.~\ref{fig_ex4}. The curve in Fig.~\ref{fig_ex4-eta} is almost
linear, because the steady system mostly stays at states with small
queue length and the associated part of $\eta^*$ is linear in $v$.
When $v$ is small, it means that the holding cost $h(n)$ dominates
the operating cost $\bm m \bm c$ in (\ref{eq_fv}). Therefore, each
server group should be turned on earlier (smaller thresholds) in
order to avoid long queues. This explains why the optimal thresholds
are $\bm \theta^*=(1,4,8)$ both for $v=0.1$ and $v=0.3$. When $v$ is
large, the operating cost will dominate the holding cost. Thus,
except for the first group (the most efficient group), the other two
server groups are turned on only when the system is congested enough
(larger thresholds).

\begin{figure}[htbp]
\centering \subfigure[Optimal average cost.]
{\includegraphics[width=0.49\columnwidth]{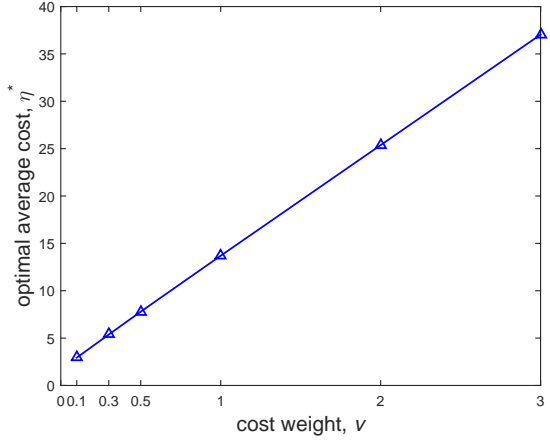}\label{fig_ex4-eta}}
\subfigure[Optimal thresholds.]
{\includegraphics[width=0.49\columnwidth]{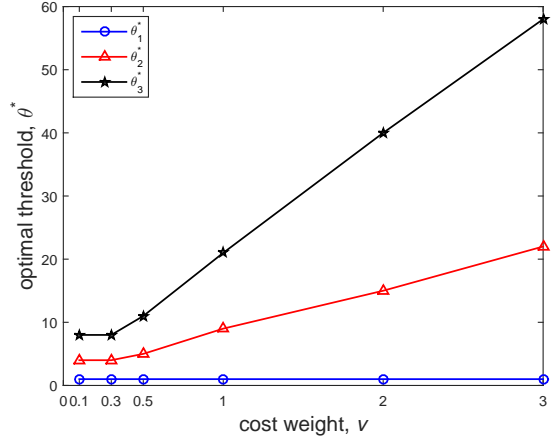}\label{fig_ex4-theta}}
\caption{Optimization results under different cost weights with $v =
[0.1\ 0.3\ 0.5\ 1\ 2\ 3]$, $\lambda=10$, $K=3$, $\bm M=(3,4,3)$,
$\bm \mu=(6,4,2)$, $\bm c=(7,8,5)$.}\label{fig_ex4}
\end{figure}

\subsection{Example 5: Model scalability}
Although all the previous examples are about small systems with only
3 server groups, our approach can be utilized to analyze large
systems with many server groups and hundreds of servers. To
demonstrate the model scalability, we consider a $c$/$\mu$-rule
system (with scale economies) where $K$ increases as $[3\ 5\ 10\ 20\
30\ 50 ]$. For the ease of implementation, we set $M_k = 3$ for all
$k$, $\bm \mu = (2,3,\cdots,K+1)$, and $\bm c = \bm \mu^{0.9}$ which
indicates a component-wise power operation of $\bm \mu$. We can
verify that this parameter setting satisfies the condition in
Assumption~\ref{assumption2}. To keep the traffic intensity at a
moderate level, we set $\lambda = 0.5 \cdot \bm M \bm \mu^T$, where
$\bm M \bm \mu^T$ is the maximal total service rate of the system.
The number of iterations of Algorithm~\ref{algo2} required for
convergence is shown in Fig.~\ref{fig_ex5} for different $K$ values.
We find that the number of iterations remains almost stable (around
3 or 4) as the system size $K$ increases. This indicates the good
scalability of our approach, namely, Algorithm 2 can be applied to a
large scale system. Note that in our model the state space remains
the same but the action space increases exponentially with $K$.
Therefore, the optimal policy structure characterized (e.g.
multi-threshold type) not only resolves the issue of \emph{infinite
state space}, but also the \emph{curse of dimensionality for action
space}.

\begin{figure}[htbp]
\centering
\includegraphics[width=0.6\columnwidth]{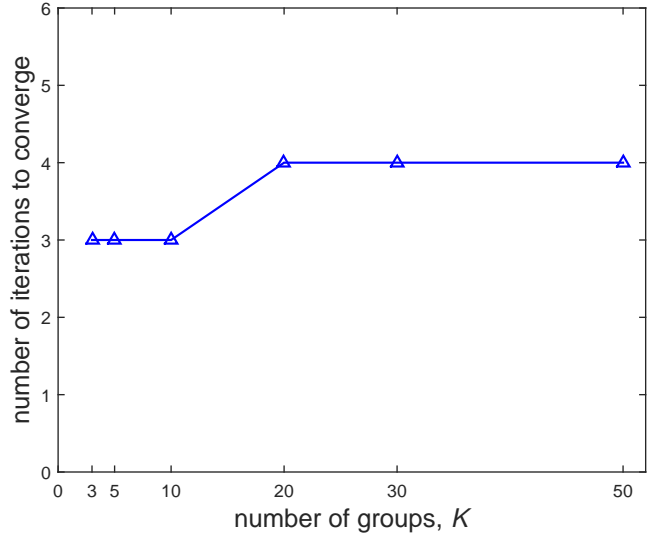}
\caption{Number of iterations needed by Algorithm~\ref{algo2} under
different problem scales with $K = [3\ 5\ 10\ 20\ 30\ 50 ]$,
$\lambda = 0.5 \cdot \bm M \bm \mu^T$, $M_k \equiv 3$, $\bm \mu =
(2,3,\cdots,K+1)$, $\bm c = \bm \mu^{0.9}$.}\label{fig_ex5}
\end{figure}

\subsection{Example 6: Robustness of the $c/\mu$-rule}
When the condition of scale economies in
Assumption~\ref{assumption2} does not hold, the optimality of the
$c/\mu$-rule is not guaranteed. Since the $c/\mu$-rule is easy to
implement, we investigate the robustness of the $c/\mu$-rule by
numerically testing several scenarios where the condition of scale
economies does not hold. For these cases, we first use Algorithm~1
to find the true optimal solution. Then, we use Algorithm~2 to find
the ``optimal" threshold policy as if the $c/\mu$-rule is
applicable, i.e., servers in group with smaller $c/\mu$ will be
turned on with higher priority. We obtain the following table to
reveal the performance gaps between the optimal policy and the
$c/\mu$-rule. The parameter setting is the same as that in
Example~1, except that we choose different cost rate vectors $\bm c$
in different scenarios.

\begin{table}[htbp]
\centering
\begin{tabular}{c|ccc}
\toprule $\bm c$    & $\eta^*$ by Algm.1 & $\hat{\eta}^*$ by Algm.2 & error\\
\midrule $[7,4,3]$  & 12.5706 & 12.5706 & 0.00\% \\
\hline   $[7,4,1.8]$& 12.5659 & 13.3287 & 6.07\% \\
\hline   $[7,4,1]$  & 11.1580 & 11.1580 & 0.00\% \\
\hline   $[8,3,1]$  & 10.0241 & 10.0615 & 0.37\% \\
\hline   $[4,3,1]$  &  8.4044 & 9.2426  & 9.97\% \\
\hline   $[18,10,3]$& 23.4844 & 23.4844 & 0.00\% \\
\bottomrule
\end{tabular}
\caption{The error effect of applying the $c/\mu$-rule when the
condition of scale economies does not hold, $\lambda=10$, $K=3$,
$\bm M=(3,4,3)$, $\bm \mu=(6,4,2)$.}\label{tab error}
\end{table}

\begin{figure}[htbp]
\centering \subfigure[Optimal policy by Algorithm~1 with $\bm
c=(7,4,1)$, it is of threshold form with $\bm \theta^*=(8,4,1)$;
``Optimal" threshold derived by Algorithm~2 is $\hat{\bm
\theta}^*=(8,4,1)$; Their error is 0.00\%.]
{\includegraphics[width=0.49\columnwidth]{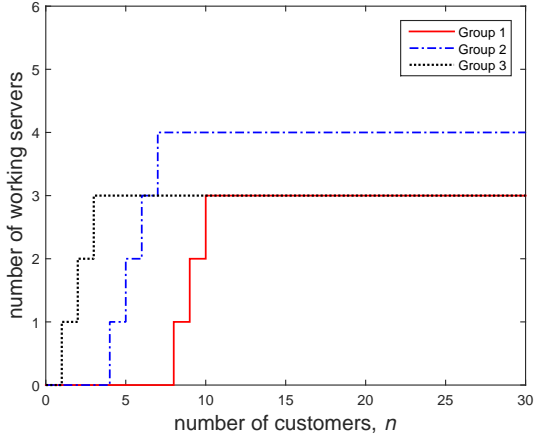}\label{fig_ex6-3}}
\subfigure[Optimal policy by Algorithm~1 with $\bm c=(8,3,1)$, it is
of threshold form with $\bm \theta^*=(11,1,5)$; ``Optimal" threshold
derived by Algorithm~2 is $\hat{\bm \theta}^*=(11,4,1)$; Their error
is 0.37\%.]
{\includegraphics[width=0.49\columnwidth]{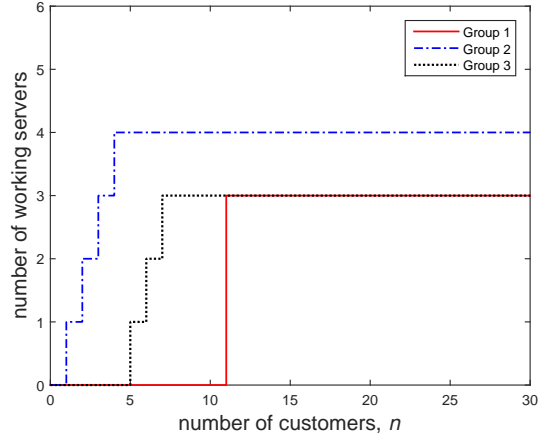}\label{fig_ex6-4}}
\subfigure[Optimal policy by Algorithm~1 with $\bm c=(4,3,1)$, it is
of threshold form with $\bm \theta^*=(1,7,4)$; ``Optimal" threshold
derived by Algorithm~2 is $\hat{\bm \theta}^*=(4,7,1)$; Their error
is 9.97\%.]
{\includegraphics[width=0.49\columnwidth]{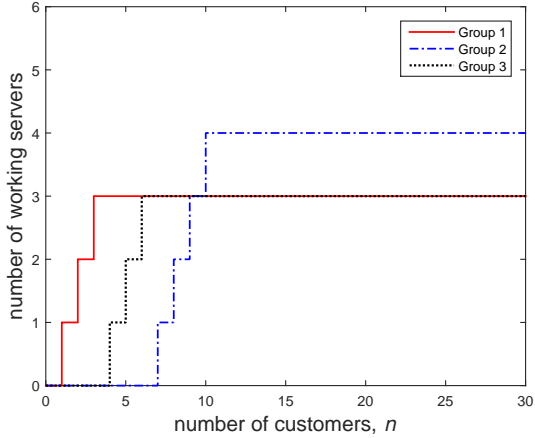}\label{fig_ex6-5}}
\subfigure[Optimal policy by Algorithm~1 with $\bm c=(18,10,3)$, it
is of threshold form with $\bm \theta^*=(11,4,1)$; ``Optimal"
threshold derived by Algorithm~2 is $\hat{\bm \theta}^*=(11,4,1)$;
Their error is 0.00\%.]
{\includegraphics[width=0.49\columnwidth]{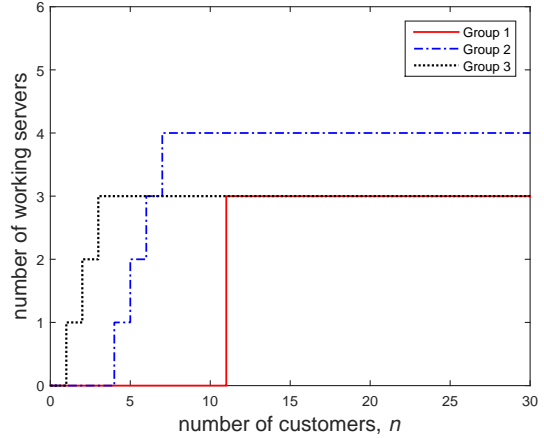}\label{fig_ex6-6}}
\caption{Solutions derived by Algorithm 1 under different operating
cost rate vectors with $\lambda=10$, $K=3$, $\bm M=(3,4,3)$, $\bm
\mu=(6,4,2)$.}\label{fig_ex6}
\end{figure}

The first three cases in Table~\ref{tab error} are designed by
changing two cost parameters from the original cost vector $\bm
c=(7,8,5)$ used in Example 2 where the scale economies condition
holds. We first change the operating cost of group 2 from 8 to 4 and
the operating cost of group 3 from 5 to 3, 1.8, and 1, respectively,
while other parameters are kept unchanged. Such parameter changes
cause the $c/\mu$ ranking sequence to change from original
$1\rightarrow 2\rightarrow 3$ to $2\rightarrow 1\rightarrow 3$,
$3\rightarrow 2\rightarrow 1$, and $3\rightarrow 2\rightarrow 1$,
respectively, for three server groups (from the fastest group 1 to
slowest group 3). Note that in Case 1 (Example 1), the $c/\mu$
rankings switch between group 1 and group 2 with group 3 ranking
unchanged so that the scale economies condition fails. However, the
$c/\mu$-rule remains the optimal. This implies that a violation of
the scale economies condition may not change the optimality of the
$c/\mu$-rule. In Case 2, the $c/\mu$ ranking sequence becomes the
reverse of the condition of scale economies and a cost gap of 6.07\%
occurs. It is interesting to see that in Case 3 a further cost
reduction of group 3 will lead to the optimality of the $c/\mu$-rule
again. For the next two cases, we keep the cost of group 3 at 1
while the costs of groups 1 and 2 are changed. It is found that the
non-optimality of the $c/\mu$-rule in these cases will cause a less
than 10\% additional cost compared to the optimal index policy.
Furthermore, it is still possible that the $c/\mu$-rule remains
optimal for the reverse order of the scale economies condition as
shown in Case 6.

Graphically, we can show how the optimal policy is different from
the $c/\mu$-rule based threshold policy. The optimal server schedule
derived by Algorithm~1 in Case 1 as shown in Fig.~\ref{fig_ex1-a} is
of threshold form with $\bm \theta^* = (5,1,8)$ which is the same as
the ``optimal" threshold derived by Algorithm~2. The optimal policy
derived by Algorithm~1 in Case 2, which is not of threshold form, is
shown in Fig.~\ref{fig_ex1-b} while the ``optimal" threshold derived
by Algorithm~2 is $\hat{\bm \theta}^* = (8,4,1)$. Such a policy
difference results in the performance degradation by 6.07\%. For
Case 3, the optimal solution derived by Algorithm~1 illustrated by
Fig.~\ref{fig_ex6-3} is of threshold form with $\bm \theta^* =
(8,4,1)$ and the ``optimal" threshold derived by Algorithm~2 is also
$\hat{\bm \theta}^* = (8,4,1)$. These two solutions coincide and
their performance error is 0. Other cases are illustrated by the
sub-figures of Fig.~\ref{fig_ex6} in a similar way.

Since $G(n)$ function plays a critical role in the optimality of the
$c/\mu$-rule and depends on multiple system parameters, we cannot
develop a pattern for the optimality of $c/\mu$-rule when the
condition of scale economies does not hold. However, from
Table~\ref{tab error} and Fig.~\ref{fig_ex6}, we observe that in
some cases, although the condition of scale economies does not hold,
the optimality of the $c/\mu$-rule still holds. In other cases, the
performance degradation caused by ``faultily" using the $c/\mu$-rule
is tolerable. This implies that the $c/\mu$-rule has a good
applicability and robustness, even for the cases where the condition
of scale economies does not hold.

\section{Conclusion}\label{section_conclusion}
In this paper, we study the service resource allocation problem in a
stochastic service system, where servers are heterogeneous and
classified into groups. Under a cost structure with customer holding
and server operating costs, we investigate the optimal index policy
(dynamic scheduling policy) which prescribes the number of working
servers in each group at each possible queue length. Using the SBO
theory, we characterize the structure of the optimal policy as a
quasi bang-bang control type. A key technical result of this work is
to establish the monotone increasing property of PRF $G^*(n)$, a
quantity that plays a fundamental role in the SBO theory. Then, the
necessary and sufficient condition and the monotone property of the
optimal policy are derived based on this property. Under an
assumption of scale economies, we further characterize the optimal
policy as the $c/\mu$-rule. That is, the servers in group with
smaller $c/\mu$ should be turned on with higher priority. The
optimality of multi-threshold policy is also proved. These
optimality structures significantly reduce the complexity of the
service resource allocation problem and resolve the issue of curse
of dimensionality in a more general heterogeneous multi-server
queueing model with infinite state space. Based on these results, we
develop the efficient algorithms for computing the optimal
scheduling policy and thresholds. Numerical examples demonstrate
these main results and reveal that the $c/\mu$-rule has a good
scalability and robustness.

A limitation of our model is that the startup and shutdown cost of
each server is assumed to be zero. The cost of customer migration
among servers is also neglected. Taking these costs into account in
our model can be a future research topic. Moreover, we assume linear
operating costs in this paper. It would be interesting to extend our
results to a more general operating cost structure. Asymptotically
extending to the scenario of many servers in a networked setting
under the fluid regime can also be another future research
direction.

\section*{Acknowledgement}
The authors would like to present grates to Prof. Xi-Ren Cao, Prof.
Christos Cassandras, Prof. Jian Chen, and Prof. Leandros Tassiulas
for their valuable discussions and comments.


\begin{thebibliography}{99}
\bibitem{Akgun14}           O. T. Akgun, D. G. Down, and R. Righter, ``Energy-aware scheduling on heterogeneous processors," \emph{IEEE Trans. Automatic Control}, vol. 59, no. 3, pp. 599-613, 2014.
\bibitem{Armony10}          M. Armony and A. R. Ward, ``Fair dynamic routing in large-scale heterogeneous-server systems," \emph{Oper. Res.}, vol. 58, no. 3, pp. 624-637, 2010.
\bibitem{Atar10}            R. Atar, C. Giat, and N. Shimkin, ``The $c\mu/\theta$ rule for many-server queues with abandonment," \emph{Oper. Res.}, vol. 58, no.5, pp. 1427-1439, 2010.
\bibitem{Balachandran75}    K. R. Balachandran and H. Tijms, ``On the D-policy for the M/G/1 queue," \emph{Management Sci.}, vol. 21, pp. 1073-1076, 1975.
\bibitem{Baras85}           J. S. Baras, A. J. Dorsey, and A. M. Makowski, ``Two competing queues with linear costs: The $\mu c$-rule is often optimal," \emph{Adv. Appl. Prob.}, vol. 17, pp. 186-209. 1985.
\bibitem{Baras85b}          J. S. Baras, D.-J. Ma, and A. Makowski, ``$K$ competing queues with geometric service requirements and linear costs: The $\mu c$-rule is always optimal," \emph{Syst. Control Lett.}, vol. 6, no.3, pp. 173-180, 1985.
\bibitem{Bell80}            C. E. Bell, ``Optimal operation of an M/M/2 queue with removable servers," \emph{Oper. Res.}, vol. 28, no. 5, pp. 1189-1204, 1980.
\bibitem{Buyukkoc85}        C. Buyukkoc, P. Varaiya, and J. Walrand, ``The $c \mu$-rule revisited," \emph{Adv. Appl. Probab.}, vol. 17, pp. 237-238, 1985.
\bibitem{Cao94}             X. R. Cao, \emph{Realization Probabilities: The Dynamics of Queueing Systems}. Springer-Verlag, New York, 1994.
\bibitem{Cao07}             X. R. Cao, \emph{Stochastic Learning and Optimization -- A Sensitivity-Based Approach}. New York: Springer, 2007.
\bibitem{deVericourt06}     F. De Vericourt and Y. P. Zhou, ``On the incomplete results for the heterogeneous server problem," \emph{Queueing Systems}, vol. 52, pp. 189-191, 2006.
\bibitem{Dieker17}          A. B. Dieker, S. Ghosh, and M. S. Squillante, ``Optimal resource capacity management for stochastic networks," \emph{Oper. Res.}, vol. 65, no. 1, pp. 221-241, 2017.
\bibitem{Fu00}              M. C. Fu, S. I. Marcus, and I. Wang, ``Monotone optimal policies for a transient queueing staffing problem," \emph{Oper. Res.}, vol. 48, no. 2, pp. 327-331, 2000.
\bibitem{Fu16}              J. Fu, B. Moran, J. Guo, E. W. M. Wong, and M. Zukerman, ``Asymptotically optimal job assignment for energy-efficient processor-sharing server farms," \emph{IEEE J. Sel. Areas Commun.}, vol. 34, no. 12, pp. 4008-4023, 2016.
\bibitem{Gandhi10}          A. Gandhi, V. Gupta, M. Harchol-Balter, and M. A. Kozuch, ``Optimality analysis of energy-performance trade-off for server farm management," \emph{Performance Eval.}, vol. 67, no. 11, pp. 1155-1171, 2010.
\bibitem{Gittins11}         J. C. Gittins, K. Glazebrook, and R. Weber. \emph{Multi-Armed Bandit Allocation Indices}. Wiley, 2011.
\bibitem{Hassin85}          R. Hassin, ``On the optimality of first-come last-served queues," \emph{Econometrica}, vol. 53, pp. 201-202, 1985.
\bibitem{Hassin03}          R. Hassin and M. Haviv. \emph{To Queue or Not to Queue: Equilibrium Behavior in Queueing Systems}, Springer, 2003.
\bibitem{Hassin15}          R. Hassin, Y. Y. Shaki, and U. Yovel, ``Optimal service-capacity allocation in a loss system," \emph{Nav. Res. Logist.}, vol. 62, pp. 81-97, 2015.
\bibitem{Heyman68}          D. P. Heyman, ``Optimal operating policies for M/G/1 queuing systems," \emph{Oper. Res.}, vol. 16, pp. 362-382, 1968.
\bibitem{Heyman77}          D. P. Heyman, ``The T-policy for the M/G/1 queue," \emph{Management Sci.}, vol. 23, pp. 775-778, 1977.
\bibitem{Ho91}              Y. C. Ho and X. R. Cao, \emph{Perturbation Analysis of Discrete-Event Dynamic Systems}. Kluwer Academic Publisher, Boston, 1991.
\bibitem{Hirayama89}        T. Hirayama, M. Kijima, and S. Nishimura, ``Further results for dynamic scheduling of multiclass G/G/l queues," \emph{J. Appl. Probab.}, vol. 26, pp. 595-603, 1989.
\bibitem{Hua16}             Z. Hua, W. Chen, and Z. G. Zhang, ``Competition and coordination in two-tier public service systems under government fiscal policy," \emph{Prod. Oper. Manag.}, vol. 25, no. 8, pp. 1430-1448, 2016.
\bibitem{Kant09}            K. Kant, ``Data center evolution: A tutorial on state of the art, issues, and challenges," \emph{Comput. Netw.}, vol. 53, no. 17, pp. 2939-2965, 2009.
\bibitem{Kebarighotbi11}    A. Kebarighotbi and C. G. Cassandras, ``Optimal scheduling of parallel queues using stochastic flow models," \emph{Discrete Event Dyn. S.}, vol. 21, pp. 547-576, 2011.
\bibitem{Kilmov74}          G. P. Klimov, ``Time-sharing service systems," \emph{Theory Probab. Appl.}, vol. 19, no. 3, pp. 532-551, 1974.
\bibitem{Kumar09}           D. Kumar, T. C. Aseri, and R. B. Patel, ``EEHC: Energy efficient heterogeneous clustered scheme for wireless sensor networks," \emph{Comput. Commun.}, vol. 32, no. 4, pp. 662-667, 2009.
\bibitem{Li04}              Q. L. Li and J. Cao, ``Two types of RG-factorizations of quasi-birth-and-death processes and their applications to stochastic integral functionals," \emph{Stoch. Models}, vol. 20, no. 3, 299-340, 2004.
\bibitem{Li17}              Q. L. Li, J. Y. Ma, M. Xie, and L. Xia, ``Group-server queues," W. Yue et al. (Eds.): \emph{Proc. QTNA'2017, LNCS 10591}, pp. 49-72, 2017.
\bibitem{Lin84}             W. Lin and P. Kumar, ``Optimal control of a queueing system with two heterogeneous servers," \emph{IEEE Trans. Automatic Control},  vol. 29, no. 8, pp. 696-703, 1984.
\bibitem{Luh02}             H. Luh and I. Viniotis, ``Threshold control policies for heterogeneous server systems," \emph{Math. Meth. Oper. Res.}, vol. 55, no. 1, pp. 121-142, 2002.
\bibitem{Ma94}              D. J. Ma and X. R. Cao, ``A direct approach to decentralized control of service rates in a closed Jackson network," \emph{IEEE Trans. Automatic Control}, vol. 39, no. 7, pp. 1460-1463, 1994.
\bibitem{Mandelbaum04}      A. Mandelbaum and A. L. Stolyar, ``Scheduling flexible servers with convex delay costs: Heavy-traffic optimality of the generalized $c\mu$-rule," \emph{Oper. Res.}, vol. 52, pp. 836-855, 2004.
\bibitem{Millhiser16}       W. P. Millhiser, C. Sinha, and M. J. Sobel, ``Optimality of the fastest available server policy," \emph{Queueing Systems}, vol. 84, pp. 237-263, 2016.
\bibitem{Nain94}            P. Nain and D. Towsley, ``Optimal scheduling in a machine with stochastic varying processing rate," \emph{IEEE Trans. Automatic Control}, vol. 39, no. 9, pp. 1853-1855, 1994.
\bibitem{Puterman94}        M. L. Puterman, \emph{Markov Decision Processes: Discrete Stochastic Dynamic Programming}. New York: John Wiley \& Sons, 1994.
\bibitem{Rosberg82}         Z. Rosberg, P. P. Varaiya, and J. C. Walrand, ``Optimal control of service in tandem queues," \emph{IEEE Trans. Automatic Control}, vol. 27, pp. 600-609, 1982.
\bibitem{Rykov01}           V. V. Rykov, ``Monotone control of queueing systems with heterogeneous servers," \emph{Queueing Systems}, vol. 37, pp. 391-403, 2001.
\bibitem{Saghafian16}       S. Saghafian and M. H. Veatch, ``A $c \mu$ rule for two-tiered parallel servers," \emph{IEEE Trans. Automatic Control}, vol. 61, no. 4, pp. 1046-1050, 2016.
\bibitem{Smith56}           W. E. Smith, ``Various optimizers for single-stage production," \emph{Nav. Res. Logist. Quart.}, vol. 3, pp. 59-66, 1956.
\bibitem{Sobel69}           M. J. Sobel, ``Optimal average-cost policy for a queue with start-up and shut-down costs," \emph{Oper. Res.}, vol. 17, pp. 145-162, 1969.
\bibitem{Stidham93}         S. Stidham and R. Weber, ``A survey of Markov decision models for control of networks of queues," \emph{Queueing Systems}, vol. 13, pp. 291-314, 1993.
\bibitem{Tian06}            N. Tian and Z. G. Zhang, \emph{Vacation Queueing Models -- Theory and Applications}. Springer, New York, 2006.
\bibitem{Tsitsiklis17}      J. N. Tsitsiklis and K. Xu, ``Flexible Queueing Architecture," \emph{Oper. Res.}, vol. 65, no. 5, pp. 1398-1413, 2017.
\bibitem{Urgaonkar05}       B. Urgaonkar, G. Pacifici, P. Shenoy, M. Spreitzer, and A. Tantawi, ``An analytical model for multi-tier Internet services and its applications," \emph{Proc. 2005 ACM SIGMETRICS}, Jun. 6-10, 2005, Banff, Alberta, Canada, pp. 291-302.
\bibitem{VanMieghem95}      J. A. Van Mieghem, ``Dynamic scheduling with convex delay costs: The generalized $c\mu$ rule," \emph{Ann. Appl. Prob.}, vol. 5, no. 3, pp. 809-833, 1995.
\bibitem{VanMieghem03}      J. A. Van Mieghem, ``Due-date scheduling: Asymptotic optimality of generalized longest queue and generalized largest delay rules," \emph{Oper. Res.}, vol. 51. no. 1, pp. 113-122, 2003.
\bibitem{Walrand84}         J. Walrand, ``A note on `optimal control of a queueing system with two heterogeneous servers'," \emph{Syst. Control Lett.}, vol. 4, pp. 131-134, 1984.
\bibitem{Wang14}            H. Wang and P. Varman, ``Balancing fairness and efficiency in tiered storage systems with bottleneck-aware allocation," \emph{Proc. 12th USENIX Conf. File and Storage Technologies (FAST'14)}, Feb. 17-20, 2014, Santa Clara, CA, USA, pp. 229-242.
\bibitem{Ward13}            A. R. Ward and M. Armony, ``Blind fair routing in large-scale service systems with heterogeneous customers and servers," \emph{Oper. Res.}, vol. 61, no. 1, pp. 228-243, 2013.
\bibitem{Weber93}           R. R. Weber, ``On a conjecture about assigning jobs to processors of different speeds," \emph{IEEE Trans. Automatic Control}, vol. 38, no. 1, pp. 166-170, 1993.
\bibitem{Weber87}           R. R. Weber and S. Stidham, ``Optimal control of service rates in networks of queues," \emph{Adv. Appl. Probab.}, vol. 19, pp. 202-218, 1987.
\bibitem{Whittle88}         P. Whittle, ``Restless bandits: Activity allocation in a changing world," \emph{J. Appl. Probab.}, vol. 25, pp. 287-298, 1988.
\bibitem{Xia14}             L. Xia, ``Service rate control of closed Jackson networks from game theoretic perspective," \emph{Eur. J. Oper. Res.}, vol. 237, no. 2, pp. 546-554, 2014.
\bibitem{Xia13}             L. Xia and B. Shihada, ``Max-Min optimality of service rate control in closed queueing networks," \emph{IEEE Trans. Automatic Control}, vol. 58, pp. 1051-1056, 2013.
\bibitem{Xia15}             L. Xia and Q. S. Jia, ``Parameterized Markov decision process and its application to service rate control," \emph{Automatica}, vol. 54, pp. 29-35, 2015.
\bibitem{Xu93}              S. H. Xu and J. G. Shantikumar, ``Optimal expulsion control -- A dual approach to admission control of an ordered-entry system," \emph{Oper. Res.}, vol. 41, pp. 1137-1152, 1993.
\bibitem{Yadin63}           M. Yadin and P. Naor, ``Queuing systems with a removable service station," \emph{Oper. Res. Quart.}, vol. 14, pp. 393-405, 1963.
\bibitem{Yao89}             D. D. Yao and Z. Schechner, ``Decentralized control of service rates in a closed Jackson network," \emph{IEEE Trans. Automatic Control}, vol. 34, no. 2, pp. 236-240, 1989.
\bibitem{Yoo96}             J. Yoo, \emph{Queueing Models for Staffing Service Operations}, Ph.D. dissertation, University of Maryland, College Park, 1996.
\bibitem{Zhang09}           Z. G. Zhang, ``Performance analysis of a queue with congestion-based staffing policy," \emph{Management Sci.}, vol. 55, no. 2, pp. 240-251, 2009.
\bibitem{Zhang10}           G. Zhang, L. Chiu, and L. Liu, ``Adaptive data migration in multi-tiered storage based cloud environment," \emph{Proc. 2010 IEEE International Conf. on Cloud Computing (CLOUD'10)}, Jul. 5-10, 2010, Miami, FL, USA, pp. 148-155.
\end{thebibliography}
\end{document}